\newcommand*\samethanks[1][\value{footnote}]{\footnotemark[#1]}
\newcommand{\mres}{\mathbin{\vrule height 1.6ex depth 0pt width 0.13ex\vrule height 0.13ex depth 0pt width 1.3ex}}
\newcommand{\Ha}{\mathcal{H}^1}
\newcommand{\Le}{\mathcal{L}}
\newcommand{\R}{\mathbb{R}}
\newcommand{\ws}{\stackrel{*}{\rightharpoonup}}
\newcommand{\F}{\mathcal{F}}
\newcommand{\G}{\mathcal{G}}
\renewcommand{\d}{\mathrm{d}}
\renewcommand{\div}{\mathrm{div}}
\newtheorem{corr}{Corollary}[subsection]
\newtheorem{prop}[corr]{Proposition}
\newtheorem{lem}[corr]{Lemma}
\newtheorem{rem}[corr]{Remark}
\newtheorem{examp}[corr]{Example}
\newtheorem{defin}[corr]{Definition}
\newtheorem{assump}[corr]{Assumption}
\newtheorem{conv}[corr]{Convention}
\newtheorem{thm}[corr]{Theorem}
\crefname{examp}{example}{examples}
\crefname{prop}{proposition}{propositions}
\crefname{lem}{lemma}{lemmas}
\crefname{defin}{definition}{definitions}
\crefname{assump}{assumption}{assumptions}
\crefname{figure}{figure}{figures}
\crefname{thm}{theorem}{theorems}
\newcommand{\notinclude}[1]{}
\newcolumntype{x}[1]{!{\centering\arraybackslash\vrule width #1}}
\def\hlinewd#1{%
\noalign{\ifnum0=`}\fi\hrule \@height #1 \futurelet
\reserved@a\@xhline}
\title{Duality in branched transport and urban planning}
\author{Julius Lohmann\thanks{Institute for Numerical and Applied Mathematics, University of Münster, Einsteinstraße 62, 48149 Münster, Germany}\hspace{0.35em}\thanks{juliuslohmann@uni-muenster.de}\qquad Bernhard Schmitzer\thanks{Institute for Computer Science, University of Göttingen, Goldschmidtstraße 7, 37077 Göttingen, Germany. schmitzer@cs.uni-goettingen.de}\qquad Benedikt Wirth\samethanks[1]\hspace{0.35em}\thanks{benedikt.wirth@uni-muenster.de}}
\begin{document}
	\maketitle
	\begin{abstract}
		In recent work \cite[Theorem 1.3.4]{LSW} we have shown the equivalence of the widely used nonconvex (generalized) branched transport problem with a shape optimization problem of a street or railroad network, known as (generalized) urban planning problem.
		The argument was solely based on an explicit construction and characterization of competitors.
		In the current article we instead analyse the dual perspective associated with both problems.
		In more detail, the shape optimization problem involves the Wasserstein distance between two measures with respect to a metric depending on the street network.
		%In the latter shape optimization problem one seeks an optimal street network, which is given by a certain $1$-dimensional set equipped with a friction coefficient.
		%The generalized urban metric describes the cost to travel on and outside the network.
		We show a Kantorovich\textendash Rubinstein formula for Wasserstein distances on such street networks under mild assumptions. Further, we provide a Beckmann formulation for such Wasserstein distances under assumptions which generalize our previous result in \cite{LSW}. As an application we then give an alternative, duality-based proof of the equivalence of both problems under a growth condition on the transportation cost,
		which reveals that urban planning and branched transport can both be viewed as two bilinearly coupled convex optimization problems.

		\textbf{Keywords:} branched transport, Fenchel duality, geometric measure theory, optimal transport, rectifiable networks, urban planning, Wasserstein distance
	\end{abstract}
	
	\tableofcontents
	
	\section{Introduction}
	Classical optimal transport takes two probability measures $\mu_+,\mu_-\in\mathcal P(\mathcal C)$ on some compact domain $\mathcal{C}\subset\R^n$
	and computes the optimal assignment $\pi$ between them,
	where $\pi$ is a probability measure on $\mathcal{C}\times\mathcal{C}$ with the interpretation that $\pi$ at $(x,y)$ represents the mass transported from $x$ to $y$.
	Optimality is here assessed with respect to the cost $\int_{\mathcal{C}\times\mathcal{C}}c(x,y)\,\d\pi(x,y)$ for some given transportation cost $c$.
	The special case $c(x,y)=|x-y|$ yields the so-called Wasserstein transport, inducing the Wasserstein-1 distance $W_1(\mu_+,\mu_-)$ between both measures.
	%\todo[inline,color=green]{Always ``Wasserstein-1 distance''.}
	An important reformulation of the Wasserstein transport is the so-called Beckmann formulation, obtained by dualizing the problem twice as we quickly recapitulate (rather formally) below:
	Introducing Lagrange multipliers $\phi,\psi$ for the constraint of $\pi$ being an assignment between $\mu_+$ and $\mu_-$, one obtains
	\begin{align*}
	W_1(\mu_+,\mu_-)
	&=\inf_{\pi\in\mathcal P(\mathcal C)}\left\{\int_{\mathcal{C}\times\mathcal{C}}|x-y|\,\d\pi(x,y)\,\middle|\,\pi(\cdot,\mathcal C)=\mu_+,\pi(\mathcal C,\cdot)=\mu_-\right\}\\
	&=\inf_{\pi\in\mathcal P(\mathcal C)}\sup_{\phi,\psi\in C(\mathcal C)}\int_{\mathcal{C}\times\mathcal{C}}|x-y|\,\d\pi(x,y)
	+\int_{\mathcal C}\phi(x)\,\d(\mu_+(x)-\pi(x,\mathcal C))
	+\int_{\mathcal C}\psi(y)\,\d(\mu_-(y)-\pi(\mathcal C,y)).\\
	\intertext{By classical Lagrange or Fenchel\textendash Rockafellar duality one can swap $\inf$ and $\sup$ and then minimize for $\pi$ to arrive at}
	%&=\sup_{\phi,\psi\in C(\mathcal C)}\inf_{\pi\in\mathcal P(\mathcal C)}\int_{\mathcal{C}\times\mathcal{C}}|x-y|\,\d\pi(x,y)
	%+\int_{\mathcal C}\phi(x)\,\d(\mu_+(x)-\pi(x,\mathcal C))
	%+\int_{\mathcal C}\psi(y)\,\d(\mu_-(y)-\pi(\mathcal C,y))\\
	&=\sup\left\{\int_{\mathcal C}\phi\,\d\mu_++\int_{\mathcal C}\psi\,\d\mu_-\,\middle|\,\phi,\psi\in C(\mathcal C),\,\phi(x)+\psi(y)\leq|x-y|\,\forall x,y\in\mathcal C\right\}.\\
	\intertext{Now it is not difficult to show that in the optimum $\psi$ has to equal $-\phi$ so that the above turns into the so-called Kantorovich\textendash Rubinstein formula}
	&=\sup\left\{\int_{\mathcal C}\phi\,\d(\mu_+-\mu_-)\,\middle|\,\phi\in C^{0,1}(\mathcal C)\text{ with }|\nabla\phi|\leq1\text{ a.e. in }\mathcal{C}\right\},\\
	\intertext{where we exploited that the constraint turns into $1$-Lipschitz continuity of $\phi$, which can equivalently be expressed by constraining $\nabla\phi$.
	Introducing a vector-valued Radon measure $\mathcal F\in\mathcal M^n(\mathcal C)$ on $\mathcal C$ as Lagrange multiplier for the new constraint we get}
	&=\sup_{\phi\in C(\mathcal C)}\inf_{\mathcal F\in\mathcal M^n(\mathcal C)}\int_{\mathcal C}\phi\,\d(\mu_+-\mu_-)+\int_{\mathcal C}\nabla\phi\cdot\d\mathcal F+|\mathcal F|(\mathcal C),\\
	\intertext{where $|\mathcal F|(\mathcal C)$ denotes the total variation of $\mathcal F$.
	An integration by parts now moves the derivative from $\phi$ onto $\mathcal F$.
	Furthermore, again by Lagrange or Fenchel\textendash Rockafellar duality we can swap $\inf$ and $\sup$ and then minimize explicitly over $\phi$, which finally yields the Beckmann formulation of the Wasserstein-1 distance,}
	%&=\inf_{\mathcal F\in\mathcal M(\mathcal C)^n}\sup_{\phi\in C(\mathcal C)}\int_{\mathcal C}\phi\,\d(\mu_+-\mu_-)+\int_{\partial\mathcal C}\phi n\cdot\d\mathcal F-\int_{\mathcal C}\phi\,\d\div(\mathcal F)+|\mathcal F|(\mathcal C)\\
	&=\inf\left\{|\mathcal F|(\mathcal C)\,\middle|\,\div (\mathcal F)=\mu_+-\mu_-\right\}.
	\end{align*}
	\begin{comment}
	If the cost $c(x,y)=|x-y|$ is replaced by the distance between $x$ and $y$ induced by a smooth Riemannian metric, the same simple argument applies.
	However, in some applications metrics $c(x,y)$ arise for which the above procedure becomes substantially more technical.
	These metrics for instance stem from a given street network or more generally a countably $1$-rectifiable set $S\subset\mathcal C$ on which the transport is cheaper than on its complement.
	In such situations neither the strong duality exploited at the end nor the characterization of $1$-Lipschitz functions via their gradients are straightforward.
	\end{comment}
	In this article we aim to derive the Kantorovich\textendash Rubinstein and the Beckmann formula for the setting where $c(x,y)$ is replaced by a (pseudo-)metric which stems from a street network or more generally a countably $1$-rectifiable set $S\subset\mathcal C$ on which the transport is cheaper than on its complement.
	These formulations are needed to understand the relation between the so-called \emph{urban planning} and \emph{branched transport problem}.

	The \emph{urban planning problem} was proposed by Brancolini and Buttazzo \cite{BB} as well as Buttazzo, Pratelli, Solimini and Stepanov \cite{BPSS}. It is a shape optimization problem in which one seeks an optimal street geometry for public transportation. The cost for a commuter to travel from one position to another (e.\,g.\ from home to work) is descibed using the \emph{urban metric}, the minimum travel distance based on the street network. The corresponding objective functional can be decomposed into the total cost for commuting (a Wasserstein-1 distance with respect to the urban metric) and for maintenance (proportional to the total length of the street network). The urban planning problem and urban metric were generalized in \cite{LSW}: As an additional parameter, a \emph{friction coefficient} was introduced which assigns a different street quality to different parts of the network. Both the urban metric and the maintenance cost in the generalized model depend on this friction coefficient.
	
	The \emph{branched transport problem} on the other hand is a non-convex and non-smooth variational problem on Radon measures in which one optimizes over mass fluxes from a given initial to a final mass distribution. A good introduction to classical branched transport is given in \cite{BCM}. 
	
	In \cite{LSW} it was shown that the generalized urban planning problem is equivalent to the branched transport problem with concave \emph{transportation cost}, extending a result from \cite{BW16} (equivalence between the classical urban planning problem and the branched transport problem with a specific transportation cost). 
%\todo[inline,color=green]{Description in brackets changed.}	
	The main step was to prove a Beckmann-type flux formulation of the Wasserstein-1 distance with respect to the generalized urban metric. To this end it was assumed that the generalized urban planning problem has finite cost, which allowed to derive a particular property of the network (namely, for any $C>0$ smaller than the friction outside the network, streets with friction smaller than $C$ have finite total length). This property was used to construct a minimizer for the Beckmann problem from a minimizer of the Wasserstein distance and vice versa.
	Without this assumption an explicit construction of minimizers is not possible since the Beckmann problem may not admit a minimizer. The main aim of the current work is to prove the Beckmann formulation using duality arguments (as recapitulated above for the Euclidean setting) without making use of the assumption. We prove it for the case of finite friction outside the network, but also show that this reformulation will stay true under mild assumptions (related to the network property from above) if the friction outside the network is infinite. We will first prove a Kantorovich\textendash Rubinstein formula for the Wasserstein distance (under these mild assumptions) in which the gradients of the Kantorovich potentials are bounded in terms of the friction coefficient and then apply Fenchel duality to obtain the Beckmann formulation.
	
	Apart from this result, we will provide a dual representation of the total network transportation cost induced by a mass flux from the branched transport problem (it is known that such a flux can be decomposed into a part concentrated on a network-like set and a diffuse part) under the assumption that the right derivative of the transportation cost is finite in zero. We will use this expression and the previously mentioned duality results to give a short alternative proof of the urban planning formulation of branched transport under the same assumption on the transportation cost.
	
	In the remainder of this section, we introduce the terminology used in urban planning and branched transport. Further, we summarize our main results and list the used notation and definitions. In \cref{subs6} we will derive the Kantorovich\textendash Rubinstein formula for the Wasserstein distance with respect to the generalized urban metric under mild assumptions on the street network. We will afterwards use this result to prove the Beckmann formulation under these mild assumptions or for the case of finite friction outside the network. 
	Finally, in \cref{uppeqbtp} we will introduce the friction coefficients as dual variables in a formula for the total network transportation cost of a mass flux from branched transport and give an alternative proof of the urban planning formulation of branched transport under the above-mentioned growth condition on the transportation cost.
	
	\subsection{Generalized urban planning}
	The original urban planning problem was introduced in \cite{BB,BPSS}. We will directly introduce the generalized setting and refer the reader to \cite[Section 1.2]{LSW} for a brief description of the relation between classical and generalized urban planning. We first introduce the set of admissible street networks.

	\begin{defin}[Street network, friction coefficient, city]
		\label{network}
		A \textbf{street network} is a pair $(S,b)$ with 
		\begin{itemize}
			\item $S\subset\R^n$ countably $1$-rectifiable and Borel measurable,
			\item $b:S\to[0,\infty)$ lower semicontinuous.
		\end{itemize}
		The function $b$ is called \textbf{friction coefficient}. A \textbf{city} is a triple $(S,a,b)$, where $(S,b)$ is a street network and $a\in[0,\infty]$ satisfies $b\leq a$ on $S$.
	\end{defin}
	The (friction) coefficients $b$ and $a$ describe the cost for travelling on and outside the network $S$ ($b$ may vary on $S$ since the street quality may vary). Note that, in general, offroad movement may be limited through buildings or the like. However, %we use the term ``city'' as an abbreviation, and because
	the results in this article will still hold true if we exclude finitely many sufficiently regular connected sets in $\R^n\backslash S$. The cost for travelling from $x$ to $y$ is desribed by the following (pseudo-)metric (in which $\Ha$ denotes the one-dimensional Hausdorff measure).
	\begin{defin}[Generalized urban metric]
		\label{gum}
		Let $(S,a,b)$ be a city. The associated \textbf{generalized urban metric} is defined as
		\begin{equation*}
		d_{S,a,b}(x,y)=\inf_{\gamma}\int_{\gamma([0,1])\cap S}b\,\mathrm{d}\Ha+a\Ha(\gamma([0,1])\setminus S),
		\end{equation*}
		where the infimum is taken over Lipschitz paths $\gamma:[0,1]\to\R^n$ with $\gamma(0)=x$ and $\gamma(1)=y$.
	\end{defin}
	Let $\mu_+$ and $\mu_-$ be probability measures on $\R^n$, which may for example describe the distribution of homes and workplaces. The total cost for the commuting population is then given by the Wasserstein distance with respect to $d_{S,a,b}$.
	\begin{defin}[Wasserstein distance, transport plans]
		\label{WdisTp}
		Let $(S,a,b)$ be a city. The \textbf{Wasserstein distance} between $\mu_+$ and $\mu_-$ with respect to $d_{S,a,b}$ is defined as
		\begin{equation*}
		W_{d_{S,a,b}}(\mu_+,\mu_-)=\inf_\pi\int_{\R^n\times\R^n}d_{S,a,b}\,\mathrm{d}\pi,
		\end{equation*}
		where the infimum is taken over all probability measures $\pi$ on $\R^n\times\R^n$ with $\pi(B\times\R^n)=\mu_+(B)$ and $\pi(\R^n\times B)=\mu_-(B)$ for all Borel sets $B\in\mathcal{B}(\R^n)$. Any such measure is called a \textbf{transport plan}. The set of transport plans is denoted by $\Pi(\mu_+,\mu_-)$.
	\end{defin}
	The Wasserstein distance is the first term which appears in the cost functional of the generalized urban planning problem. The second term corresponds to the total maintenance cost of the street network.
	\begin{defin}[Maintenance cost]
		A \textbf{maintenance cost} is a non-increasing function $c:[0,\infty)\to[0,\infty]$.
	\end{defin}
	The maintenance cost $c\circ b$ has the interpretation of a cost per length for maintaining a street with friction coefficient $b$.
	\begin{defin}[Generalized urban planning cost]\label{def:urbPlnCost}
		Given a maintenance cost $c$, set
		\begin{equation*}
		a=\inf c^{-1}(0).
		\end{equation*}
		The \textbf{generalized urban planning cost} of a city $(S,a,b)$ is given by
		\begin{equation*}
		\mathcal{U}^{c,\mu_+,\mu_-}[S,b]=W_{d_{S,a,b}}(\mu_+,\mu_-)+\int_Sc(b)\,\mathrm{d}\Ha,
		\end{equation*}
		and the \textbf{generalized urban planning problem} is the optimization problem
		\begin{equation*}
		\inf\left\{\mathcal{U}^{c,\mu_+,\mu_-}[S,b]\,\middle|\, (S,b)\text{ street network with }b\leq a\text{ on }S\right\}.
		\end{equation*}
	\end{defin}
	Note that $c\circ b$ is upper semi-continuous due to the assumptions on $b$ and $c$.
	%In this article, we will prove our main results under the assumption that the friction coefficient $a$ is finite (cf.\ \cref{assumpmetric,afinite,growthcond}). 
	
	\subsection{Generalized branched transport}
	To describe branched transport we use the Eulerian formulation due to Xia \cite{X}, which uses vector-valued Radon measures. Let $\mu_+$ and $\mu_-$ be probability measures on $\R^n$ that describe the initial an final distribution of mass (or of commuters in the context of urban planning). The cost to move an amount of mass per unit distance is given by the following function.
	\begin{defin}[Transportation cost]
		A \textbf{transportation cost} is a non-decreasing concave function $\tau:[0,\infty)\to[0,\infty)$ with $\tau(0)=0$.
	\end{defin}
	A classical example for a transportation cost is $\tau(m)=m^\alpha$ for some $\alpha\in(0,1)$.
	The concavity of $\tau$ encodes that the transport gets cheaper per mass particle if more mass is transported together.
	%For transport along a finite graph the cost is readily defined as follows.
	To describe transport along a finite graph we define polyhedral mass fluxes (or equivalently, finite oriented weighted graphs) between distributions $\mu_+$ and $\mu_-$ which are concentrated in finitely many points.
	\begin{defin}[Polyhedral mass flux and branched transport cost]
		Assume that $\mu_+$ and $\mu_-$ are finite sums of weighted Dirac measures, i.\,e.,
		\begin{equation*}
		\mu_+=\sum_{i=1}^{M}f_i\delta_{x_i}\text{\qquad and\qquad}\mu_-=\sum_{j=1}^{N}g_j\delta_{y_j},
		\end{equation*}
		where $f_i,g_j\in[0,1]$ satisfy $\sum_if_i=\sum_jg_j=1$ and $x_i,y_j\in\R^n$. A \textbf{polyhedral mass flux} between $\mu_+$ and $\mu_-$ is a vector-valued Radon measure $\F\in\mathcal{M}^n(\R^n)$ which satisfies $\textup{div}(\F)=\mu_+-\mu_-$ in the distributional sense and can be written as
		\begin{equation*}
		\F=\sum_em_e\vec{e}\Ha\mres e,
		\end{equation*}
		where the sum is over finitely many edges $e=x_e+[0,1](y_e-x_e)\subset\R^n$ with orientation $\vec{e}=(y_e-x_e)/|y_e-x_e|$, the coefficients $m_e$ are positive real weights, and $\Ha\mres e$ is the restriction of the one-dimensional Hausdorff measure to $e$. The \textbf{branched transport cost} of $\F$ with respect to a transportation cost $\tau$ is defined as
		\begin{equation*}
		\mathcal{J}^{\tau,\mu_+,\mu_-}[\F]=\sum_e\tau(m_e)\Ha(e).
		\end{equation*}
	\end{defin}
	We use the lower semi-continuous envelope of $\mathcal{J}^{\tau,\mu_+,\mu_-}$ to get the branched transport cost of a general vector-valued Radon measure with distributional divergence equal to $\mu_+-\mu_-$.
	\begin{defin}[Mass flux and branched transport cost]
		\label{defags}
		A vector-valued Radon measure $\F\in\mathcal{M}^n(\R^n)$ is called \textbf{mass flux} between the probability measures $\mu_+$ and $\mu_-$ if there exist two sequences of probability measures $\mu_+^k,\mu_-^k$ and a sequence of polyhedral mass fluxes $\F_k$ with $\textup{div}(\F_k)=\mu_+^k-\mu_-^k$ such that $\F_k\xrightharpoonup{*}\F$ and $\mu_{\pm}^k\xrightharpoonup{*}\mu_{\pm}$, where $\xrightharpoonup{*}$ indicates the weak-$*$ convergence in duality with continuous functions. We write $(\F_k,\mu_+^k,\mu_-^k)\xrightharpoonup{*}(\F,\mu_+,\mu_-)$. If $\F$ is a mass flux, then the \textbf{branched transport cost} of $\F$ is defined as
		\begin{equation*}
		\mathcal{J}^{\tau,\mu_+,\mu_-}[\F]=\inf\left\{\liminf_k\mathcal{J}^{\tau,\mu_+^k,\mu_-^k}[\F_k]\,\bigg|\, (\F_k,\mu_+^k,\mu_-^k)\xrightharpoonup{*}(\F,\mu_+,\mu_-)\right\}.
		\end{equation*}
		The corresponding \textbf{branched transport problem} is the optimization problem
		\begin{equation*}
		\inf\{ \mathcal{J}^{\tau,\mu_+,\mu_-}[\F]\,|\,\F\in\mathcal{M}^n(\R^n),\textup{div}(\F)=\mu_+-\mu_- \}.
		\end{equation*}
	\end{defin}

	\subsection{Summary of results}
	Our main results are
	\begin{itemize}
		\item
		a Kantorovich\textendash Rubinstein formula for the Wasserstein distance $W_{d_{S,a,b}}$ under mild assumptions on the city $(S,a,b)$ (\cref{KRformula}),
		\item
		a Beckmann formula for $W_{d_{S,a,b}}$ under the same assumptions on the city or $a<\infty$ (\cref{BWfinal}),
		\item a dual formula for a total network transportation cost under the growth condition $\tau'(0)<\infty$ (\cref{subst}), and
		\item a new proof of the equivalence between urban planning and branched transport for the case $\tau'(0)<\infty$ using our duality results (\cref{finalthm}).
	\end{itemize}
	Let $\mu_+,\mu_-$ be probability measures on $\R^n$ with bounded supports, contained in $\mathcal{C}=[-1,1]^n$ without loss of generality. We will prove our first main result under the following assumption.
	\begin{assump}[Regularity of the city $(S,a,b)$]
		\label{assumpmetric}
		The friction coefficient $b$ is bounded away from $0$, and the function which maps $x\in\R^n$ to the friction coefficient at $x$, i.e. $b(x)$ if $x\in S$, $a$ else, is lower semi-continuous.
	\end{assump}
	The last condition in \cref{assumpmetric} is for instance automatically satisfied if the set $S$ of streets is closed.
	Note, though, that it is strictly weaker than requiring closedness of $S$ (which typically is not satisfied for branched transport networks, see \cref{exbtn}).
	\begin{thm}[Version of the Kantorovich\textendash Rubinstein formula]
		\label{KRformula}
		Let $(S,a,b)$ be a city such that \cref{assumpmetric} is satisfied. Then the Wasserstein-1 distance between $\mu_+$ and $\mu_-$ is given by
		\begin{equation*}
		W_{d_{S,a,b}}(\mu_+,\mu_-)=\sup_{\varphi}\int_{\mathcal{C} }\varphi\,\mathrm{d}(\mu_+-\mu_-),
		\end{equation*}
		where the supremum is taken over functions $\varphi\in C^1(\mathcal{C})$ with $|\nabla\varphi|\leq b$ on $S$ and $|\nabla\varphi|\leq a$ in $\mathcal{C}\backslash S$.
	\end{thm}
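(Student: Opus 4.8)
The plan is to prove the identity by showing two inequalities, treating the "$\geq$" direction (weak duality) as the routine one and the "$\leq$" direction (exhibiting enough competitors $\varphi$) as the substantial one.

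For the inequality $W_{d_{S,a,b}}(\mu_+,\mu_-)\geq\sup_\varphi\int_{\mathcal C}\varphi\,\d(\mu_+-\mu_-)$: fix any admissible $\varphi\in C^1(\mathcal C)$ with $|\nabla\varphi|\leq b$ on $S$ and $|\nabla\varphi|\leq a$ on $\mathcal C\setminus S$. I would first check that such a $\varphi$ is $d_{S,a,b}$-$1$-Lipschitz, i.e.\ $\varphi(x)-\varphi(y)\leq d_{S,a,b}(x,y)$ for all $x,y\in\mathcal C$. Indeed, for any Lipschitz path $\gamma$ from $y$ to $x$, the fundamental theorem of calculus gives $\varphi(x)-\varphi(y)=\int_0^1\nabla\varphi(\gamma(t))\cdot\dot\gamma(t)\,\d t\leq\int_0^1|\nabla\varphi(\gamma(t))||\dot\gamma(t)|\,\d t$, and splitting the parameter interval according to whether $\gamma(t)\in S$ or not, bounding $|\nabla\varphi|$ by $b$ respectively $a$, and using the area/coarea formula to pass from the parametrized integral to $\int_{\gamma([0,1])\cap S}b\,\dHa+a\Ha(\gamma([0,1])\setminus S)$, yields the bound; taking the infimum over $\gamma$ gives $d_{S,a,b}$-Lipschitzness. (A minor technical point: the parametrized curve may traverse points multiple times, so one actually gets an upper bound by the un-reduced integral, which is still fine.) Then, for any transport plan $\pi\in\Pi(\mu_+,\mu_-)$,
\begin{equation*}
\int_{\mathcal C}\varphi\,\d(\mu_+-\mu_-)=\int_{\mathcal C\times\mathcal C}(\varphi(x)-\varphi(y))\,\d\pi(x,y)\leq\int_{\mathcal C\times\mathcal C}d_{S,a,b}(x,y)\,\d\pi(x,y),
\end{equation*}
and taking the infimum over $\pi$ and the supremum over $\varphi$ gives the claim.

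For the reverse inequality $W_{d_{S,a,b}}(\mu_+,\mu_-)\leq\sup_\varphi\int_{\mathcal C}\varphi\,\d(\mu_+-\mu_-)$, I would proceed in two stages. First, the classical Kantorovich duality for the lower semicontinuous cost $d_{S,a,b}$ (which is a genuine metric on the compact set $\mathcal C$ once \cref{assumpmetric} holds, since $b$ is bounded away from $0$; lower semicontinuity of $d_{S,a,b}$ follows from the lower semicontinuity of the pointwise friction) gives $W_{d_{S,a,b}}(\mu_+,\mu_-)=\sup\{\int_{\mathcal C}\varphi\,\d(\mu_+-\mu_-)\mid\varphi\in C(\mathcal C),\ \varphi(x)-\varphi(y)\leq d_{S,a,b}(x,y)\ \forall x,y\}$; the distinguished maximizer is the $d_{S,a,b}$-distance-type function $\varphi(x)=\inf_y(\varphi(y)+d_{S,a,b}(x,y))$, which is itself $d_{S,a,b}$-$1$-Lipschitz. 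So it remains to show: every $d_{S,a,b}$-$1$-Lipschitz function $\varphi_0\in C(\mathcal C)$ can be approximated, in a way that does not decrease $\int\varphi\,\d(\mu_+-\mu_-)$ in the limit, by functions $\varphi\in C^1(\mathcal C)$ satisfying the pointwise gradient bounds $|\nabla\varphi|\leq b$ on $S$, $|\nabla\varphi|\leq a$ off $S$. This is the crux.

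The main obstacle is precisely this smoothing/regularization step: a $d_{S,a,b}$-Lipschitz function need not be $C^1$, and naive mollification $\varphi_\varepsilon=\varphi_0*\rho_\varepsilon$ will in general violate $|\nabla\varphi_\varepsilon|\leq b$ on $S$ because the mollification at a point $x\in S$ averages in values of $\nabla\varphi_0$ from nearby points possibly not on $S$ (where the only available bound is the larger constant $a$, or $\esssup b$). The key to overcoming this is to exploit lower semicontinuity of the pointwise friction function $\beta(x)$ ($=b(x)$ on $S$, $=a$ off $S$): given $x\in S$ and $\delta>0$, there is a neighborhood of $x$ on which $\beta\geq b(x)-\delta$, hence one can afford to use a \emph{spatially varying} mollification radius $\varepsilon(x)$, small enough near each point of $S$ that the averaging only sees friction values $\geq b(x)-\delta$; combined with a slight uniform shrinking of $\varphi_0$ (multiplying by $1-\delta$) to absorb the $\delta$ losses, this produces $C^1$ competitors $\varphi$ with $|\nabla\varphi|\leq(1-\delta)^{-1}\beta\cdot(\ldots)$ — more carefully, one wants $|\nabla\varphi|\leq\beta$ exactly, which is arranged by making the $\delta$-correction multiplicative and letting $\delta\to0$. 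One must also handle the boundary of $\mathcal C$ (extend $\varphi_0$ slightly beyond $\mathcal C$, e.g.\ via a McShane-type $d_{S,a,b}$-Lipschitz extension to a neighborhood, noting the remark in the excerpt that enlarging the domain is harmless) so that the mollification is defined up to $\partial\mathcal C$. Finally, $\varphi_\varepsilon\to\varphi_0$ uniformly on $\mathcal C$ (since $\varphi_0$ is continuous on the compact set and the mollification radius is bounded), so $\int_{\mathcal C}\varphi_\varepsilon\,\d(\mu_+-\mu_-)\to\int_{\mathcal C}\varphi_0\,\d(\mu_+-\mu_-)$, which closes the chain of inequalities and proves the formula.
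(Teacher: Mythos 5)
Your ``$\geq$'' direction and your first reduction (classical Kantorovich duality for the lower semicontinuous cost $d_{S,a,b}$, restriction to $d$-Lipschitz potentials) are fine and coincide with the paper's. The gap is in the step you yourself identify as the crux: approximating a $d$-Lipschitz $\varphi_0$ by elements of $C^1_d$. Your mechanism --- a spatially varying mollification radius justified by lower semicontinuity of the pointwise friction $\beta$, plus a multiplicative $(1-\delta)$ shrinking --- does not deliver the bound $|\nabla\varphi|\leq b$ on $S$. Lower semicontinuity gives $\beta\geq b(x)-\delta$ near $x\in S$, i.e.\ a \emph{lower} bound on nearby friction and hence only a lower bound on nearby $d$-distances; what you would need to control difference quotients of $\varphi_0$ near $x$ is an \emph{upper} bound of the form $d(y,z)\leq(b(x)+\delta)|y-z|$ for $y,z$ close to $x$, and this is simply false: off the network the metric costs of order $a\gg b(x)$ per unit length (or is unbounded when $a=\infty$). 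Concretely, take $S=\{0\}\times[-1,1]\subset\mathcal C=[-1,1]^2$, $b\equiv\beta_0$ small, $a$ large, and $\varphi_0(x_1,x_2)=a\,x_1$. Then $\varphi_0\in C_d$ (any path joining $x$ to $y$ has $\Ha(\gamma\setminus S)\geq|x_1-y_1|$), but every mollification of $\varphi_0$, with any radius profile, has gradient of magnitude $a$ on $S$, and multiplying by $1-\delta$ cannot close a gap of order $a/\beta_0$. (The theorem is not contradicted: $\varphi_0$ \emph{can} be uniformly approximated in $C^1_d$, but only by modifying it near $S$, not by averaging it.)

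The paper avoids this by never mollifying a general element of $C_d$. Instead it approximates the metric itself: with $b_k(z)=k\wedge\min_{\mathcal C\cap\overline B_{1/k}(z)}b$ one has $b_k\leq b(z)$ on $B_\varepsilon(z)$ for $\varepsilon<1/k$ --- an upper bound in exactly the direction your argument lacks --- so the $k$-Lipschitz distance functions $d_k(x,\cdot)$ have local Lipschitz constant at most $b(z)$ near each $z$, and their mollifications lie in $C^1_d$ (\cref{papprox}, \cref{psepa}). This only yields a point-separation statement, which is then upgraded to density of $C^1_d$ in $C_d$ via a Stone--Weierstrass argument using smoothed min/max operations (\cref{minmaxop}, \cref{stone}); both hypotheses of \cref{assumpmetric} (lower semicontinuity of the extended $b$ \emph{and} $\inf b>0$, the latter to get uniformly bounded lengths of near-optimal paths in \cref{papprox}) enter there, and \cref{sharp1,sharp2} show they cannot be dropped. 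So your outline needs this (or some comparable) additional idea; as written, the central approximation step would fail.
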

	The work here is to show that one may restrict the supremum to differentiable functions with the given gradient constraint;
	for lower regularity requirements on $\varphi$ it is a standard result.
	\begin{assump}[Finite friction outside network]
		\label{afinite}
		The cost $a$ for travelling outside the network is finite.
	\end{assump}
	
	The Beckmann formulation of the Wasserstein distance will actually be true under \cref{assumpmetric} \textit{or} \cref{afinite}.
	\begin{thm}[Beckmann-type formulation of $W_{d_{S,a,b}}(\mu_+,\mu_-)$]
		\label{BWfinal}
		Let $(S,a,b)$ be a city and suppose that \cref{assumpmetric} or \cref{afinite} is satisfied. Then we have
		\begin{equation*}
		W_{d_{S,a,b}}(\mu_+,\mu_-)=B_{S,a,b}(\mu_+,\mu_-)
		\qquad\text{for}\qquad
		B_{S,a,b}(\mu_+,\mu_-)=\inf_{\xi,\F^\perp}\int_S b|\xi|\,\mathrm{d}\Ha+a|\F^\perp|(\mathcal{\mathcal{C}}),
		\end{equation*}
		where the infimum is taken over $\xi\in L^1(\Ha\mres S;\R^n)$ and $\F^\perp\in\mathcal{M}^n(\mathcal{C})$ with $\F^\perp\mres S=0$ and $\textup{div}(\xi\Ha\mres S+\F^\perp)=\mu_+-\mu_-$.
	\end{thm}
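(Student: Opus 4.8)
The plan is to make rigorous, for the pseudometric $d_{S,a,b}$, the final steps of the formal computation in the introduction: pass from the Kantorovich--Rubinstein formula of \cref{KRformula} to the Beckmann formula by Fenchel--Rockafellar duality, introducing the competing flux $\F$ as the dual variable for the gradient constraint. The first point is that the constraint in \cref{KRformula} — $|\nabla\varphi|\le b$ on $S$ and $|\nabla\varphi|\le a$ on $\mathcal{C}\setminus S$ — is exactly the pointwise bound $|\nabla\varphi(x)|\le\beta(x)$ for all $x\in\mathcal{C}$, where $\beta$ is the pointwise friction ($\beta=b$ on $S$, $\beta=a$ elsewhere) that \cref{assumpmetric} requires to be lower semicontinuous and bounded away from $0$. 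So under \cref{assumpmetric} the formula of \cref{KRformula} reads $W_{d_{S,a,b}}(\mu_+,\mu_-)=\sup\{\int_\mathcal{C}\varphi\,\d(\mu_+-\mu_-) : \varphi\in C^1(\mathcal{C}),\ \nabla\varphi\in K\}$ with the closed convex set $K=\{v\in C^0(\mathcal{C};\R^n) : |v(x)|\le\beta(x)\text{ for all }x\in\mathcal{C}\}$.

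I would then apply Fenchel duality on $X=C^1(\mathcal{C})$, rewriting the supremum as $-\inf_{\varphi\in X}\big(G(\varphi)+H(\nabla\varphi)\big)$ with $G(\varphi)=-\int_\mathcal{C}\varphi\,\d(\mu_+-\mu_-)$ continuous linear, $H=\chi_K$ on $Y=C^0(\mathcal{C};\R^n)$, and $\nabla\in\mathcal{L}(X,Y)$. Because $\beta$ is bounded away from $0$, the point $0=\nabla 0$ lies in the interior of $K$, so $H$ is continuous there and the qualification hypothesis of the Fenchel--Rockafellar theorem holds; hence $\inf_X(G+H\circ\nabla)=\max_{\F\in Y^*}\big(-G^*(\nabla^*\F)-H^*(-\F)\big)$ with the dual value attained, and $Y^*=\mathcal{M}^n(\mathcal{C})$ by the Riesz representation theorem. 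A direct computation gives $G^*(\nabla^*\F)=0$ when $\int_\mathcal{C}\nabla\varphi\cdot\d\F=-\int_\mathcal{C}\varphi\,\d(\mu_+-\mu_-)$ for all $\varphi\in C^1(\mathcal{C})$ — i.e.\ $\div\F=\mu_+-\mu_-$ in the sense used in the definition of $B_{S,a,b}$ — and $G^*(\nabla^*\F)=+\infty$ otherwise, while $H^*(-\F)=\sup_{v\in K}\big(-\int_\mathcal{C}v\cdot\d\F\big)=\int_\mathcal{C}\beta\,\d|\F|$; here $\le$ is immediate from $|v|\le\beta$, and $\ge$ follows by approximating the lower semicontinuous function $\beta$ from below by an increasing sequence of continuous functions and combining monotone convergence with the classical identity $\sup\{\int_\mathcal{C}v\cdot\d\F : v\in C^0(\mathcal{C};\R^n),\ |v|\le g\}=\int_\mathcal{C}g\,\d|\F|$ for continuous $g\ge 0$. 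This yields $W_{d_{S,a,b}}(\mu_+,\mu_-)=\min\big\{\int_S b\,\d|\F\mres S|+a\,|\F\mres(\mathcal{C}\setminus S)|(\mathcal{C}) : \F\in\mathcal{M}^n(\mathcal{C}),\ \div\F=\mu_+-\mu_-\big\}$, the minimum running over all competing fluxes, whose restriction to $S$ need not be absolutely continuous.

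To identify this with $B_{S,a,b}$ it then suffices to show that a minimizing $\F$ can be chosen with $\F\mres S\ll\Ha\mres S$ having an $L^1$ density; writing $\xi=\d(\F\mres S)/\d(\Ha\mres S)$ and $\F^\perp=\F\mres(\mathcal{C}\setminus S)$, the two expressions then agree, and $B_{S,a,b}\ge W_{d_{S,a,b}}$ is anyway clear since $B_{S,a,b}$ is an infimum over the smaller class. Since $\beta$ is bounded away from $0$, every competitor of finite cost has $|\F|(\mathcal{C})<\infty$; such a vector measure with measure divergence admits a superposition decomposition into elementary fluxes along rectifiable curves with total length integrable against the decomposing measure (the Smirnov-type decomposition used throughout branched transport, cf.\ \cite{BCM}). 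Restricted to $S$, each elementary flux is absolutely continuous with respect to $\Ha\mres S$ with an $L^1$ density controlled by the length of the curve, and integrating preserves this; hence $\F\mres S$ has an $L^1(\Ha\mres S;\R^n)$ density, which finishes the case \cref{assumpmetric} (and, as a byproduct, shows the infimum defining $B_{S,a,b}$ is attained).

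Under \cref{afinite} but not \cref{assumpmetric}, \cref{KRformula} is not directly available, and the plan is to reduce to the previous case: replace $b$ by $b\vee\delta$ so that the friction is bounded away from $0$, pass to the lower semicontinuous envelope of the resulting pointwise friction (which, as $a<\infty$, changes it only on $\overline{S}\setminus S$, so that — possibly after a harmless modification of the network — one is back in the setting of \cref{assumpmetric}), apply the first case, and let $\delta\downarrow 0$ using $d_{S,a,b\vee\delta}\downarrow d_{S,a,b}$ together with the corresponding convergence of the Beckmann functionals. I expect the two delicate points to be the rigorous execution of the superposition argument of the third paragraph (measurability of the decomposition and the $L^1$ control of the densities along it) and this limiting argument for \cref{afinite}, in particular checking that the network modification and the envelope operation affect neither $W_{d_{S,a,b}}$ nor $B_{S,a,b}$ in the limit and that one indeed stays within the class of admissible cities.
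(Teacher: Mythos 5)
Your treatment of the case of \cref{assumpmetric} follows essentially the paper's route: \cref{KRformula} plus Fenchel--Rockafellar duality between $C^1(\mathcal{C})$ and $C(\mathcal{C};\R^n)$, with the qualification coming from $\inf b>0$ and with $H^*(-\F)=\int_{\mathcal C}\beta\,\d|\F|$ computed by approximating the lower semi-continuous $\beta$ from below by continuous functions (this is exactly \cref{Fenchel}). The only real difference is the identification of the Radon-measure Beckmann problem with $B_{S,a,b}$: the paper does this (\cref{beckradon}) for \emph{every} admissible $\F$ by invoking \v{S}ilhav\'y's structure theorem for divergence-measure vector fields, $\F=\vartheta\Ha\mres M+\G$ with $\G$ $\Ha$-diffuse, which immediately gives the $L^1(\Ha\mres S)$ density of $\F\mres S$; your Smirnov-superposition argument can be made to work (for $\Ha$-null $B\subset S$ each elementary flux gives $R_\gamma(B)=0$ by the area formula, hence $\F(B)=0$), but the step ``integrating preserves this'' hides the measurability/Fubini bookkeeping and is genuinely weaker as written than citing the structure theorem; also note that the identification is needed for all competitors (or at least for a dual maximizer), not merely stated as ``a minimizing $\F$ can be chosen''.

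The genuine gap is in the case of \cref{afinite} without \cref{assumpmetric}. Replacing the pointwise friction by its lower semi-continuous envelope is \emph{not} value-preserving, not even in the limit $\delta\downarrow0$: in \cref{exm:pointwiseDistanceApproximationNotLSC} (equivalently \cref{sharp1}), $S=\mathbb{Q}\cap[0,1]$ with constant $b<a<\infty$, one has $W_{d_{S,a,b}}=B_{S,a,b}=a$ for $\mu_\pm=\delta_0,\delta_1$, while the envelope equals $b$ on all of $[0,1]$, so the modified problem has value $b$; the change is by a fixed amount independent of $\delta$. Moreover, the set where the envelope drops below $a$ outside $S$ need not be countably $1$-rectifiable (take $S$ a countable dense union of segments in a square with $b\equiv1<a$: the envelope is $1$ on the whole square), so the ``modified network'' is not an admissible city and neither \cref{KRformula} nor the Beckmann functional applies to it. So the reduction you propose cannot be repaired by checking details; a different mechanism is needed. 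The paper instead approximates from \emph{inside}: it takes closed pieces $S^N$ of an approximating sequence for $S$ (for which the extension of $b$ by $a$ is automatically lower semi-continuous) together with $b_\lambda=b\vee\lambda$, and proves the two inequalities separately -- for $W_d\le B$ it moves the flux $\xi\Ha\mres(S\setminus S^N)$ into the diffuse part at cost $a\int_{S\setminus S^N}|\xi|\,\d\Ha\to0$ and uses $d^N\ge d$; for $W_d\ge B$ it uses an optimal plan, the monotone convergence $d^N\searrow d$ (\cref{Siconv}, which is exactly where $a<\infty$ enters and fails otherwise, cf.\ \cref{pointwnottrue}), and the monotonicity of the Beckmann value under shrinking the network (\cref{rem:monotonicityBeckmann}). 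Your proposal contains no substitute for these two one-sided comparisons, so as it stands the \cref{afinite} case is not proved.
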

	It is unclear whether the result even stays true if one drops all assumptions
	-- we expect this to be the case, but the method of proof via duality does not allow to draw such conclusions. A brief summary of the Beckmann formulation of the Wasserstein distance with respect to $d_{S,a,b}$ is given in \cref{fig:BWfigure}.
	\begin{figure}
		\centering
			\begin{tabular}{|cx{1.5pt}  c | c|} 
 \hline
 $a$ & current article & \cite{LSW} \\ 
 \hline\hlinewd{1.5pt}
 finite & no assumption & \cite[Asm.\,1.3.1]{LSW} \\ 
 \hline
 infinite & \cref{assumpmetric} & \cite[Asm.\,1.3.1]{LSW} \\
 \hline
\end{tabular}
		\caption{Summary of the cases for which the Beckmann formulation of the Wasserstein distance with respect to $d_{S,a,b}$ has been proven. The requirement in \cite[Asm.\,1.3.1]{LSW} is that for every $C\in[0,a)$ we have $\mathcal{H}^1(\{ b\leq C\})<\infty$. In all four cases a minimizer for $W_{d_{S,a,b}}(\mu_+,\mu_-)$ exists (if the Wasserstein distance is finite). The same holds true for the Beckmann formulation except for $a\in(0,\infty]$ with no additional condition (see \cite[Exm.\,1.3.1]{LSW}).}
		\label{fig:BWfigure}
	\end{figure}
	The analogue of \cref{afinite} in the context of branched transport turns out to be the following.
	\begin{assump}[Growth condition]
		\label{growthcond}
		The average transportation cost per unit mass $m\mapsto\tau(m)/m$ is bounded, i.\,e., $\tau'(0)=\lim_{m\searrow 0}\tau(m)/m<\infty$.
	\end{assump}
	The maintenance cost corresponding to a transportation cost for the branched transport problem is defined via convex conjugation.
	\begin{defin}[Maintenance cost associated with $\tau$]
		Let $\tau:[0,\infty)\to[0,\infty)$ be a transportation cost. We extend $\tau$ to a function on $\R$ via $\tau(m)=-\infty$ for all $m<0$. The associated \textbf{maintenance cost} is defined by $\varepsilon(b)=(-\tau)^*(-b)=\sup_{m\geq 0}\tau(m)-bm$ for any $b\in[0,\infty)$.
	\end{defin}
	It can be shown that the branched transport cost of a mass flux $\F=\xi\Ha\mres S+\F^\perp$ with divergence equal to $\mu_+-\mu_-$ can be decomposed into the total cost for transportation on the network $S$ and a cost term for the diffuse part $\F^\perp$ (see \cite[Prop.\,2.32]{BW} or \cite[Lemma\,3.1.8]{LSW}). The next formula shows that the network cost term can be seen as an optimization problem over friction coefficients.
	\begin{thm}[Dual formula for total network transportation cost]
		\label{subst}
		Assume that $S\subset\mathcal{C}$ is countably $1$-rectifiable and Borel measurable. Furthermore, let $\xi\in L^1(\mathcal{H}^1\mres S;\R^n)$ represent a mass flux on $S$ and suppose that $\tau$ is a transportation cost which satisfies \cref{growthcond}. Then we have
		\begin{equation*}
		\int_S\tau (|\xi |)\,\mathrm{d}\mathcal{H}^1=\inf_b\int_Sb|\xi |\,\mathrm{d}\mathcal{H}^1+\int_S\varepsilon (b)\,\mathrm{d}\mathcal{H}^1,
		\end{equation*}
		where the infimum is taken over lower semi-continuous functions $b:S\to[0,\tau'(0)]$.
	\end{thm}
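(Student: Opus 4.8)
The plan is to observe that, after interchanging integration with a pointwise minimum, the asserted identity is the integrated form of a Fenchel biconjugation formula, the only genuine work being to respect the requirement that $b$ be lower semicontinuous. First I would establish the pointwise statement: for every $m\ge 0$,
\[
\tau(m)=\min_{b\in[0,\tau'(0)]}\big(bm+\varepsilon(b)\big),
\]
with the minimum attained exactly on $\partial^+\tau(m)\cap[0,\tau'(0)]$ (the superdifferential of the concave function $\tau$), which contains the left derivative $\tau'_-(m)$ when $m>0$ and equals $\{\tau'(0)\}$ when $m=0$. To see this, note that \cref{growthcond} forces $\tau$ to be continuous at $0$ — otherwise $\lim_{m\searrow0}\tau(m)=:c>0$ and then $\tau(m)/m\ge c/m\to\infty$, contradicting $\tau'(0)<\infty$. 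Hence, with the convention $\tau=-\infty$ on $(-\infty,0)$ as in the paper, the function $-\tau$ is proper, convex and lower semicontinuous on $\R$, so by Fenchel\textendash Moreau $(-\tau)^{**}=-\tau$; since $(-\tau)^*(p)=\sup_{m\ge0}(pm+\tau(m))=\varepsilon(-p)$, this reads $\tau(m)=\inf_{b\in\R}(bm+\varepsilon(b))$. Now $\varepsilon(b)=+\infty$ for $b<0$ (as $\tau\ge0$), while the chord bound $\tau(m')\le\tau'(0)\,m'$ (the quotient $\tau(m')/m'$ is non-increasing with supremum $\tau'(0)$) gives $\varepsilon(b)=0$ for $b\ge\tau'(0)$; thus the infimum is unchanged over $b\in[0,\tau'(0)]$, where $b\mapsto bm+\varepsilon(b)$ is lower semicontinuous, so it is attained. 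Equality in $bm+\varepsilon(b)\ge\tau(m)$ (which holds by definition of $\varepsilon$ as a supremum) means $m$ attains that supremum, i.e.\ $b\in\partial^+\tau(m)$; for $m>0$ this set contains $\tau'_-(m)$ and $0\le\tau'_-(m)\le\tau(m)/m\le\tau'(0)$, and for $m=0$ it meets $[0,\tau'(0)]$ only in $\tau'(0)$.

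The inequality ``$\le$'' in the theorem is then immediate: for any lower semicontinuous $b\colon S\to[0,\tau'(0)]$, the function $\varepsilon\circ b$ is Borel and nonnegative ($\varepsilon$ is lower semicontinuous, being a conjugate), and pointwise $b|\xi|+\varepsilon(b)\ge\tau(|\xi|)$ by the above, so integrating against $\Ha\mres S$ yields $\int_S b|\xi|\dHa+\int_S\varepsilon(b)\dHa\ge\int_S\tau(|\xi|)\dHa$. For the reverse inequality I would exhibit near-optimal lower semicontinuous competitors. Fix a Borel representative of $\xi$ and set $b_0(x)=\tau'_-(|\xi(x)|)$ if $|\xi(x)|>0$ and $b_0(x)=\tau'(0)$ if $\xi(x)=0$; then $b_0\colon S\to[0,\tau'(0)]$ is Borel and by the pointwise statement $b_0|\xi|+\varepsilon(b_0)=\tau(|\xi|)$ on $S$. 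Since $\tau(|\xi|)\le\tau'(0)|\xi|\in L^1(\Ha\mres S)$ this is an identity between integrable functions, so $\int_S b_0|\xi|\dHa+\int_S\varepsilon(b_0)\dHa=\int_S\tau(|\xi|)\dHa<\infty$. Now $\nu:=|\xi|\,\Ha\mres S$ is a finite Borel, hence Radon, measure on $\R^n$, and $b_0$ (extended by $0$ off $S$) is bounded, so lies in $L^1(\nu)$; by the Vitali\textendash Carathéodory theorem, for each $\eta>0$ there is a lower semicontinuous $v\colon\R^n\to(-\infty,\infty]$ with $v\ge b_0$ everywhere and $\int_S(v-b_0)\,\d\nu<\eta$. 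Put $\tilde b:=\min\{v,\tau'(0)\}$, which is again lower semicontinuous (the minimum of a lower semicontinuous function with a constant is lower semicontinuous) and satisfies $b_0\le\tilde b\le\tau'(0)$ on $S$; since $\varepsilon$ is non-increasing, $\varepsilon(\tilde b)\le\varepsilon(b_0)$ on $S$, and since $\tilde b\le v$,
\begin{align*}
\int_S\tilde b|\xi|\dHa+\int_S\varepsilon(\tilde b)\dHa
&\le\int_S v\,\d\nu+\int_S\varepsilon(b_0)\dHa\\
&<\int_S b_0|\xi|\dHa+\eta+\int_S\varepsilon(b_0)\dHa
=\int_S\tau(|\xi|)\dHa+\eta.
\end{align*}
Letting $\eta\to0$ gives the reverse inequality, and the theorem follows.

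\textbf{Expected main obstacle.} If the lower-semicontinuity requirement on $b$ were dropped, the statement would be immediate from the pointwise formula by interchanging integral and pointwise minimum; the only real difficulty is honouring that constraint, since the natural selection $b_0$ is generally \emph{not} lower semicontinuous — on $\{\xi=0\}$ it must equal the maximal value $\tau'(0)$, while it can be far smaller nearby. The two structural facts that rescue the one-sided approximation are that $\varepsilon$ is non-increasing (so enlarging $b$ can only decrease the $\varepsilon$-term, whence it suffices to approximate $b_0$ from \emph{above} in $L^1(\nu)$) and that Vitali\textendash Carathéodory furnishes $v\ge b_0$ pointwise everywhere rather than merely $\nu$-a.e., which matters on the $\nu$-null set $\{\xi=0\}$ where $b_0$ attains its maximum. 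One should also verify that $\nu=|\xi|\,\Ha\mres S$ is a genuine finite Radon measure (it is, because $|\xi|\in L^1(\Ha\mres S)$ and every finite Borel measure on $\R^n$ is Radon), even though $\Ha\mres S$ itself need not be locally finite.
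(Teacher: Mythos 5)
Your proof is correct, but it takes a genuinely different route from the paper. The paper proceeds in two stages: it first proves the identity with the infimum taken over bounded \emph{measurable} $b$ (\cref{subst3}), using the Rockafellar--Castaing--Valadier theorem on conjugation under the integral sign (\cref{cv}) applied on an exhaustion of $S$ by finite-measure pieces and patching by monotone convergence; it then upgrades measurable to lower semi-continuous competitors by a second exhaustion (setting $b=\tau'(0)$ off the closed pieces $S^N$), a shift $b_\delta=b+\delta$, the Vitali--Carath\'eodory theorem with respect to $\Ha\mres S^N$, and dominated convergence together with continuity properties of $\varepsilon$. You bypass the measurable-selection duality entirely by writing down the explicit pointwise minimizer $b_0$ via supergradients of $\tau$ (the same object as in \cref{constrmin}), having first checked that \cref{growthcond} forces continuity of $\tau$ at $0$ so that the pointwise Fenchel identity holds and $b_0$ takes values in $[0,\tau'(0)]$; you then perform the lower semi-continuous relaxation in a single Vitali--Carath\'eodory step, but with respect to the \emph{finite} Radon measure $\nu=|\xi|\,\Ha\mres S$ rather than $\Ha\mres S^N$. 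The two structural observations that make this one-shot argument work are exactly the ones you flag: $\varepsilon$ is non-increasing, so it suffices to majorize $b_0$, and the Vitali--Carath\'eodory majorant satisfies $v\ge b_0$ \emph{everywhere}, so on $\{\xi=0\}$ (which is $\nu$-null but may carry infinite $\Ha$-measure) the truncation $\tilde b=\min\{v,\tau'(0)\}$ equals $\tau'(0)$ and the $\varepsilon$-term vanishes there. This avoids both the \cite[Thm.\,VII-7]{CV} machinery and the paper's double exhaustion/dominated-convergence bookkeeping, and it sidesteps the fact that $\Ha\mres S$ itself need not be Radon; what the paper's longer route buys is the intermediate statement \cref{subst3} (valid over bounded measurable $b$ under mere right-continuity of $\tau$ at $0$), which is of some independent interest. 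Two cosmetic points: Borel measurability of $\varepsilon\circ b$ follows from monotonicity of $\varepsilon$ (lower semi-continuity of $\varepsilon$ composed with a lower semi-continuous $b$ would not by itself give lower semi-continuity), and after fixing a Borel representative of $\xi$ the identity $b_0|\xi|+\varepsilon(b_0)=\tau(|\xi|)$ should be read as holding $\Ha\mres S$-almost everywhere for the original equivalence class, which is all the integrals require.
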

	At the end of this article, we will use the previous duality results to provide an alternative proof of the urban planning formulation of branched transport under \cref{growthcond}.
	\begin{thm}[{{Bilevel formulation of the branched transport problem \cite[Theorem 1.3.4]{LSW}}}]
		\label{finalthm}
		Let $\tau$ be a transportation cost with associated maintenance cost $\varepsilon$. Then the branched transport problem can equivalently be written as urban planning problem,
		\begin{equation*}
		\inf_{\F}\mathcal{J}^{\tau,\mu_+,\mu_-}[\F]=\inf_{S,b}\mathcal{U}^{\varepsilon,\mu_+,\mu_-}[S,b],
		\end{equation*}
		where the infima are taken over $\F\in\mathcal{M}^n(\mathcal{C})$ with $\textup{div}(\F)=\mu_+-\mu_-$ and street networks $(S,b)$ with $S\subset\mathcal{C}$ and $b\leq a=\tau'(0)$ on $S$.
	\end{thm}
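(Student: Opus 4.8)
The plan is to combine the three preceding duality results into a single chain of equalities. Start from the right-hand side: the generalized urban planning cost of a city $(S,b)$ is $\mathcal{U}^{\varepsilon,\mu_+,\mu_-}[S,b]=W_{d_{S,a,b}}(\mu_+,\mu_-)+\int_S\varepsilon(b)\,\d\Ha$, and since $\varepsilon$ is by construction a maintenance cost with $\inf\varepsilon^{-1}(0)=a=\tau'(0)$ (which is finite by \cref{growthcond}), \cref{afinite} holds and hence \cref{BWfinal} applies. Thus $W_{d_{S,a,b}}(\mu_+,\mu_-)=B_{S,a,b}(\mu_+,\mu_-)=\inf_{\xi,\F^\perp}\int_Sb|\xi|\,\d\Ha+a|\F^\perp|(\mathcal{C})$ over admissible $(\xi,\F^\perp)$ with $\div(\xi\Ha\mres S+\F^\perp)=\mu_+-\mu_-$. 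Therefore
\begin{equation*}
\inf_{S,b}\mathcal{U}^{\varepsilon,\mu_+,\mu_-}[S,b]=\inf_{S,b}\inf_{\xi,\F^\perp}\left(\int_Sb|\xi|\,\d\Ha+\int_S\varepsilon(b)\,\d\Ha+a|\F^\perp|(\mathcal{C})\right),
\end{equation*}
where the outer infimum runs over street networks $(S,b)$ with $b\leq a$ on $S$.

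Next I would exchange the order of infimization. For a \emph{fixed} rectifiable set $S$ and fixed competitor $(\xi,\F^\perp)$, minimizing over the friction coefficient $b:S\to[0,a]$ decouples pointwise along $S$: the term $\int_S b|\xi|\,\d\Ha+\int_S\varepsilon(b)\,\d\Ha$ is exactly the quantity controlled by \cref{subst}, whose infimum over lower semicontinuous $b:S\to[0,\tau'(0)]=[0,a]$ equals $\int_S\tau(|\xi|)\,\d\Ha$. (One must check that the admissibility constraint $b\leq a$ matches the range $[0,\tau'(0)]$ in \cref{subst} — it does, with equality $a=\tau'(0)$ — and that no interaction with $\F^\perp$ spoils the pointwise decoupling, which holds because $\F^\perp\mres S=0$ so the $a|\F^\perp|(\mathcal{C})$ term does not depend on $b$.) Hence the right-hand side collapses to
\begin{equation*}
\inf_{S}\inf_{\xi,\F^\perp}\left(\int_S\tau(|\xi|)\,\d\Ha+a|\F^\perp|(\mathcal{C})\right)
=\inf_{\F=\xi\Ha\mres S+\F^\perp}\left(\int_S\tau(|\xi|)\,\d\Ha+\tau'(0)\,|\F^\perp|(\mathcal{C})\right),
\end{equation*}
where the last infimum is over all mass fluxes $\F\in\mathcal{M}^n(\mathcal{C})$ with $\div(\F)=\mu_+-\mu_-$, decomposed into a $1$-rectifiable part and a diffuse part $\F^\perp$ singular to every $1$-rectifiable set (any such $\F$ can be written this way, and conversely any choice of $S$, $\xi$, $\F^\perp$ yields an admissible $\F$).

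Finally I would identify this last expression with $\inf_{\F}\mathcal{J}^{\tau,\mu_+,\mu_-}[\F]$. This is precisely the decomposition of the branched transport cost of a mass flux into a network transportation term $\int_S\tau(|\xi|)\,\d\Ha$ plus a diffuse term, with the diffuse term carrying the weight $\tau'(0)=\lim_{m\searrow0}\tau(m)/m$; this is the content of the cited decomposition results \cite[Prop.\,2.32]{BW} and \cite[Lemma\,3.1.8]{LSW} (valid under \cref{growthcond}, which guarantees the diffuse coefficient $\tau'(0)$ is finite and that the formula holds). Invoking that decomposition identity converts the right-hand infimum into $\inf_{\F}\mathcal{J}^{\tau,\mu_+,\mu_-}[\F]$, completing the proof.

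\textbf{Main obstacle.} The delicate point is the interchange of the infimum over $b$ with the infimum over $(\xi,\F^\perp)$ and, simultaneously, justifying that $S$ can be treated as a free variable that is "absorbed" into the flux decomposition. Concretely: in the urban planning problem the pair $(S,b)$ is chosen first and then $W_{d_{S,a,b}}$ is a genuine optimal transport problem on top, whereas after applying \cref{BWfinal} and \cref{subst} one wants $S$, $\xi$, $\F^\perp$ and $b$ all to vary jointly. The exchange is legitimate because all the functionals are infima (so nested infima commute freely) and because the $b$-minimization is pointwise on $S$ and independent of $\F^\perp$; but care is needed to confirm that the class of triples $(S,\xi,\F^\perp)$ arising on the urban-planning side is exactly the class parametrizing mass fluxes via the rectifiable/diffuse decomposition on the branched-transport side — in particular that one may always take $S$ to be (a countably $1$-rectifiable Borel representative of) the set carrying the rectifiable part of $\F$, and that lower semicontinuity of the minimizing $b$ from \cref{subst} is compatible with $(S,b)$ being an admissible street network. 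A secondary technical subtlety is the treatment of the infinite-cost / non-existence-of-minimizers regime: since we only claim equality of infima (not attainment), no compactness argument is needed, but one should note that both sides are $+\infty$ simultaneously when $\mu_+\neq\mu_-$ cannot be connected at finite cost.
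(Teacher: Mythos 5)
Your proposal is correct and follows essentially the same argument as the paper: it chains \cref{version}, \cref{subst}, and \cref{BWfinal} (with $a=\tau'(0)<\infty$ guaranteeing \cref{afinite}), merely running the string of equalities from the urban planning side to the branched transport side instead of the reverse. Since every step is an equality and nested infima commute, the direction is immaterial.
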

	
	\Cref{BWfinal,finalthm,subst} reveal the convex duality structure that connects branched transport with urban planning:
	Essentially, both problems can be (formally) written as
	\begin{equation*}
	\inf_{S,b,\xi,\mathcal{F}^\perp}\int_S b|\xi|\,\mathrm d\Ha+a|\F^\perp|(\mathcal{\mathcal{C}})+\int_S\varepsilon(b)\,\mathrm d\Ha+\iota_{\{ \mu_+-\mu_- \}}(\textup{div}(\xi\Ha\mres S+\F^\perp))
	\end{equation*}
	with $a=\sup_S b$, which is separately convex in $(\xi,\mathcal F^\perp)$ and in $b$.
	Eliminating $(\xi,\mathcal F^\perp)$ (by minimizing in those variables) leads to urban planning, eliminating $(b,a)$ to branched transport.
	The nonconvexity of the problem arises from the bilinearity in which $(|\xi|,|\mathcal F^\perp|)$ and $(b,a)$ are coupled.
	
	To the end of this section, we briefly discuss \cref{assumpmetric} and \cref{afinite} with regard to \cref{KRformula}. The conditions in \cref{KRformula} are sharp (cf.~\cref{sharp1} and \cref{sharp2}). We believe that the condition that $b$ is bounded away from zero in \cref{KRformula} can be dropped under \cref{afinite}. We use this condition in the proof of \cref{papprox} to get uniformly bounded lenghts of certain almost optimal paths with respect to $d_{S,a,b}$. It is possible to relax this condition to the requirement that there exists some constant $C>0$ such that $\mathcal{H}^1(\{ b\leq C\})<\infty$ (an implication of \cite[Asm.\,1.3.1]{LSW}), but this would make the arguments more technical. Moreover, we will use the boundedness away from zero in the proof of \cref{BWfinal} under \cref{assumpmetric}. For the sake of completeness, we finally give an example which shows that it is reasonable not to restrict to closed networks $S$ in the context of branched transport. The requirement that $b$, extended with value $a$, is lower semi-continuous in $\R^n$ from \cref{assumpmetric} turns out to be a natural weakening of the requirement that $S$ is closed.
	\begin{figure}
		\centering
		\begin{subfigure}[b]{0.3\textwidth}
			\centering
			\begin{tikzpicture}
			\coordinate (x1) at (0,0);
			\coordinate (x2) at (0,1);
			\coordinate (x3) at (1.2,-0.2);	
			
			\coordinate (y1) at (2,2.1);
			\coordinate (y2) at (3,1.5);
			\coordinate (y3) at (3.5,0.3);
			\coordinate (y4) at (2.8,-0.3);
			\coordinate (y5) at (0.5,-2);
			
			\coordinate (v1) at (0.4,-1);
			\coordinate (v2) at (0.5,0.5);
			\coordinate (v3) at (1.2,0.7);
			\coordinate (v4) at (1.8,1.3);
			\coordinate (v5) at (2.1,1.2);
			\coordinate (v6) at (2.9,0.4);	
			
			\node[circle,fill=black,inner sep=0.5pt,minimum size=0.1cm] at (x1) {};
			\node[circle,fill=black,inner sep=0.5pt,minimum size=0.1cm] at (x2) {};
			\node[circle,fill=black,inner sep=0.5pt,minimum size=0.1cm] at (x3) {};
			
			\node[draw=black,circle,fill=white,inner sep=0.5pt,minimum size=0.1cm] at (y1) {};
			\node[draw=black,circle,fill=white,inner sep=0.5pt,minimum size=0.1cm] at (y2) {};
			\node[draw=black,circle,fill=white,inner sep=0.5pt,minimum size=0.1cm] at (y3) {};
			\node[draw=black,circle,fill=white,inner sep=0.5pt,minimum size=0.1cm] at (y4) {};
			\node[draw=black,circle,fill=white,inner sep=0.5pt,minimum size=0.1cm] at (y5) {};			
			
			\draw[-stealth] (x1) -- (v1);
			\draw[stealth-] (v1) -- (x3);
			\draw[-stealth] (v1) -- (y5);	
			\draw[-stealth] (x1) -- (v2);
			\draw[stealth-] (v2) -- (x2);
			\draw[-stealth] (v2) -- (v3);
			\draw[stealth-] (v3) -- (x3);
			\draw[-stealth] (v3) -- (v4);
			\draw[-stealth] (v4) -- (v5);
			\draw[-stealth] (v4) -- (y1);
			\draw[stealth-] (y2) -- (v5);
			\draw[-stealth] (v5) -- (v6);
			\draw[stealth-] (y3) -- (v6);
			\draw[-stealth] (v6) -- (y4);
			\end{tikzpicture}
			\caption{Polyhedral mass flux.}
			\label{sf:fig2a}
		\end{subfigure}
		\begin{subfigure}[b]{0.3\textwidth}
			\centering
			\begin{tikzpicture}
			\foreach \x in {1,2,...,29}
			\foreach \y in {1,2,...,29}
			{		
				\node[circle,fill=black,inner sep=0.2pt,minimum size=0.01cm] at ({2*\x/30},{2*\y/30}) {};
				\node[circle,fill=black,inner sep=0.2pt,minimum size=0.01cm] at ({1.5+2*\x/30},{2.3+2*\y/30}) {};
				\draw[line width = 0.0 mm] ({2*\x/30},{2*\y/30}) -- ({1.5+2*\x/30},{2.3+2*\y/30});
			}
			
			\draw[densely dashed,white,line width = 0.0] (1.5,4.3) -- (1.5,2.3) -- (3.5,2.3);
			\draw[-stealth,lightgray,thick] (1,1) -- (2.5,3.3);
			\draw[densely dashed, white, line width = 0.0] (0,2) -- (2,2) -- (2,0);
			
			\node[label={[label distance=-0.2cm,text=lightgray]0:  $v$}] at (2.15,2.55) {};
			
			\end{tikzpicture}	
			\caption{Network not closed.}
			\label{sf:fig2b}
		\end{subfigure}
		\begin{subfigure}[b]{0.3\textwidth}
			\centering
			\begin{tikzpicture}
			\coordinate (x) at (0,0);
			
			\coordinate (y1) at (1,0);
			\coordinate (y2) at (-1,0);
			\coordinate (y3) at (0,1);
			\coordinate (y4) at (0,-1);
			
			\coordinate (y5) at (1.5,0.5);
			\coordinate (y6) at (1.5,-0.5);
			\coordinate (y7) at (-1.5,0.5);
			\coordinate (y8) at (-1.5,-0.5);
			\coordinate (y9) at (0.5,1.5);
			\coordinate (y10) at (-0.5,1.5);
			\coordinate (y11) at (0.5,-1.5);
			\coordinate (y12) at (-0.5,-1.5);	
			
			\coordinate (y13) at (1.5,1);
			\coordinate (y14) at (1.5,-1);
			\coordinate (y15) at (2,0.5);
			\coordinate (y16) at (2,-0.5);
			\coordinate (y17) at (-1.5,1);
			\coordinate (y18) at (-1.5,-1);
			\coordinate (y19) at (-2,0.5);
			\coordinate (y20) at (-2,-0.5);
			\coordinate (y21) at (0.5,2);
			\coordinate (y22) at (-0.5,2);
			\coordinate (y23) at (1,1.5);
			\coordinate (y24) at (-1,1.5);
			\coordinate (y25) at (0.5,-2);
			\coordinate (y26) at (-0.5,-2);
			\coordinate (y27) at (1,-1.5);
			\coordinate (y28) at (-1,-1.5);
			
			\draw[white,fill=lightgray] (0,0) circle (2.25);
			\draw[white,fill=white] (0,0) circle (0.75);
			
			\node[circle,fill=black,inner sep=0.5pt,minimum size=0.1cm] at (x) {};
			
			\node[draw=black,circle,fill=white,inner sep=0.5pt,minimum size=0.1cm] at (y1) {};
			\node[draw=black,circle,fill=white,inner sep=0.5pt,minimum size=0.1cm] at (y2) {};
			\node[draw=black,circle,fill=white,inner sep=0.5pt,minimum size=0.1cm] at (y3) {};
			\node[draw=black,circle,fill=white,inner sep=0.5pt,minimum size=0.1cm] at (y4) {};
			
			\node[draw=black,circle,fill=white,inner sep=0.5pt,minimum size=0.1cm] at (y13) {};
			\node[draw=black,circle,fill=white,inner sep=0.5pt,minimum size=0.1cm] at (y14) {};
			\node[draw=black,circle,fill=white,inner sep=0.5pt,minimum size=0.1cm] at (y15) {};
			\node[draw=black,circle,fill=white,inner sep=0.5pt,minimum size=0.1cm] at (y16) {};
			\node[draw=black,circle,fill=white,inner sep=0.5pt,minimum size=0.1cm] at (y17) {};	
			\node[draw=black,circle,fill=white,inner sep=0.5pt,minimum size=0.1cm] at (y18) {};
			\node[draw=black,circle,fill=white,inner sep=0.5pt,minimum size=0.1cm] at (y19) {};
			\node[draw=black,circle,fill=white,inner sep=0.5pt,minimum size=0.1cm] at (y20) {};
			\node[draw=black,circle,fill=white,inner sep=0.5pt,minimum size=0.1cm] at (y21) {};
			\node[draw=black,circle,fill=white,inner sep=0.5pt,minimum size=0.1cm] at (y22) {};	
			\node[draw=black,circle,fill=white,inner sep=0.5pt,minimum size=0.1cm] at (y23) {};
			\node[draw=black,circle,fill=white,inner sep=0.5pt,minimum size=0.1cm] at (y24) {};
			\node[draw=black,circle,fill=white,inner sep=0.5pt,minimum size=0.1cm] at (y25) {};	
			\node[draw=black,circle,fill=white,inner sep=0.5pt,minimum size=0.1cm] at (y26) {};
			\node[draw=black,circle,fill=white,inner sep=0.5pt,minimum size=0.1cm] at (y27) {};
			\node[draw=black,circle,fill=white,inner sep=0.5pt,minimum size=0.1cm] at (y28) {};
			
			\draw[-stealth] (y5) -- (y13);
			\draw[-stealth] (y5) -- (y15);
			\draw[-stealth] (y6) -- (y14);
			\draw[-stealth] (y6) -- (y16);
			
			\draw[-stealth] (y7) -- (y17);
			\draw[-stealth] (y7) -- (y19);
			\draw[-stealth] (y8) -- (y18);
			\draw[-stealth] (y8) -- (y20);
			
			\draw[-stealth] (y9) -- (y21);
			\draw[-stealth] (y9) -- (y23);
			\draw[-stealth] (y10) -- (y22);
			\draw[-stealth] (y10) -- (y24);
			
			\draw[-stealth] (y11) -- (y25);
			\draw[-stealth] (y11) -- (y27);
			\draw[-stealth] (y12) -- (y26);
			\draw[-stealth] (y12) -- (y28);
			
			\node[draw=black,circle,fill=white,inner sep=0.5pt,minimum size=0.1cm] at (y5) {};	
			\node[draw=black,circle,fill=white,inner sep=0.5pt,minimum size=0.1cm] at (y6) {};
			\node[draw=black,circle,fill=white,inner sep=0.5pt,minimum size=0.1cm] at (y7) {};
			\node[draw=black,circle,fill=white,inner sep=0.5pt,minimum size=0.1cm] at (y8) {};
			\node[draw=black,circle,fill=white,inner sep=0.5pt,minimum size=0.1cm] at (y9) {};
			\node[draw=black,circle,fill=white,inner sep=0.5pt,minimum size=0.1cm] at (y10) {};	
			\node[draw=black,circle,fill=white,inner sep=0.5pt,minimum size=0.1cm] at (y11) {};
			\node[draw=black,circle,fill=white,inner sep=0.5pt,minimum size=0.1cm] at (y12) {};
			
			\draw[-stealth] (y1) -- (y5);
			\draw[-stealth] (y1) -- (y6);
			\draw[-stealth] (y2) -- (y7);
			\draw[-stealth] (y2) -- (y8);
			\draw[-stealth] (y3) -- (y9);
			\draw[-stealth] (y3) -- (y10);
			\draw[-stealth] (y4) -- (y11);
			\draw[-stealth] (y4) -- (y12);
			
			\node[draw=black,circle,fill=white,inner sep=0.5pt,minimum size=0.1cm] at (y1) {};
			\node[draw=black,circle,fill=white,inner sep=0.5pt,minimum size=0.1cm] at (y2) {};
			\node[draw=black,circle,fill=white,inner sep=0.5pt,minimum size=0.1cm] at (y3) {};
			\node[draw=black,circle,fill=white,inner sep=0.5pt,minimum size=0.1cm] at (y4) {};
			
			\draw[-stealth] (x) -- (y1);
			\draw[-stealth] (x) -- (y2);
			\draw[-stealth] (x) -- (y3);
			\draw[-stealth] (x) -- (y4);
			
			\node[label={[label distance=-0.2cm]0: \tiny $\alpha 4^{1-\alpha}$}] at (0,0.5) {};
			\node[label={[label distance=-0.2cm]0: \tiny $\alpha 8^{1-\alpha}$}] at (0.25,1.25) {};
			\node[label={[label distance= -0.2cm]90: \tiny $\alpha 16^{1-\alpha}$}] at (0.98,1.53) {};
			\end{tikzpicture}
			\caption{Approximate irrigation of a ring.}
			\label{sf:fig2c}
		\end{subfigure}
		\caption{Sketches for \cref{exbtn}.}
	\end{figure}
	\begin{examp}[Examples of branched transport networks]
	\label{exbtn}
	\begin{enumerate}[label=(\alph*)]
	\item
	Clearly, any polyhedral mass flux induces a closed network $S$ of finite length (\cref{sf:fig2a}). If $b$ is lower semi-continuous on $S$ with $b\leq a\in[0,\infty]$, then the extension of $b$ with value $a$ to $\R^n$ is automatically lower semi-continuous so that \cref{assumpmetric} is fulfilled as soon as $b$ is bounded away from zero.
	\item
	Recall from \cref{finalthm} that, in the context of branched transport, the friction coefficient $a$ equals $\tau'(0)$, where $\tau$ is some transportation cost of the branched transport problem. For finite $a$, e.\,g.\ for $\tau(m)=m$ (Wasserstein transport), the network $S$ induced by an optimal mass flux is not necessarily closed: Let $\mu_+=\sum_im_i\delta_{x_i}$ with $m_i\in(0,1]$ be a probability measure concentrated on pairwise disjoint points $x_i\in\R^n$ such that $\bigcup_i\{ x_i\}$ is not closed (e.\,g.\ the point cloud $(\mathbb{Q}\cap (0,1))^2$ in \cref{sf:fig2b}). Define $\mu_-(B)=\mu_+(B-v)$ for some fixed $v\in\R^n\backslash\{ 0 \}$. Then an optimal Wasserstein mass flux is concentrated on the set $S=\bigcup_i[x_i,x_i+v]$, which is not closed. Nevertheless, the induced city satisfies \cref{assumpmetric}: From \cref{subst} we see directly that $b\equiv 1$ on $S$ is the right choice so that the extension to $\R^n$ with value $a=\tau'(0)=1$ is lower semi-continuous.
	\item
	For an example with $a=\infty$ we can choose $\mu_+=\delta_0$ and $\mu_-=\mathcal{L}^n\mres U$ the Lebesgue measure on some open and bounded set $U\subset\R^n$ with $\Le^n(U)=1$ (in \cref{sf:fig2c} given by a ring). It can then be shown that the branched transport problem with classical transportation cost $\tau(m)=m^\alpha$ admits a minimizer with finite energy if $\alpha\in (1-1/n,1)$ \cite[Thm.\,3.1]{X}, where any optimal mass flux is of the form $\mathcal{F}=\xi\Ha\mres S$ with $S$ countably $1$-rectifiable (which we will later see from the formula in \cref{version}, in which the diffuse part $\mathcal{F}^\perp$ must vanish). If $S$ was closed, then we would get $U\subset S$ which cannot be true for $n>1$ by $\mathcal{L}^n(S)=0$ -- rather $S$ will be an infinitely refining, fractal network whose first iterations are illustrated in \cref{sf:fig2c} and whose mass flux becomes the smaller the finer the branches are. From \cref{subst} one can already guess that the corresponding $b$ will tend to infinity the smaller the flux and branches are (its values along one path are indicated in \cref{sf:fig2c}; compare with the formula in \cref{constrmin}. Since $a=\tau'(0)=\infty$, \cref{assumpmetric} is fulfilled.% It is now possible to remove the divergence-free part of $\mathcal{F}$ and to define a lower semi-continuous function $b$ on the (adjusted) network $S$ such that the resulting street network $(S,b)$ is optimal for the urban planning problem (see \cite[Sect.\,1.3]{LSW}). For finer and optimal approximations with $N$ sinks the value of $b$ on branches close to $U$ will tend to infinity as $N\to\infty$.
	\end{enumerate}
	\end{examp}
	\subsection{Notation and definitions}
	\label{notdefs}
	Throughout the article we will use the following notation and definitions.
	\begin{itemize}
		\item $I= [0,1]$ denotes the \emph{unit interval}. We will use this notation if $I$ represents the domain of a path.
		\item $\mathcal{S}^{n-1}$ denotes the \emph{unit sphere}.
		\item $\mathcal{C}$ denotes the \emph{hypercube} $[-1,1]^n$.
		\item We write $B_r(x)$ for the \emph{open Euclidean ball} with radius $r>0$ and centre $x\in\R^n$ and $\overline{B}_r(x)$ for its closure.
		\item $\mathcal{L}^k$ denotes the $k$-dimensional \emph{Lebesgue measure}. We write $\mathcal{L}=\mathcal{L}^1$.
		\item $\mathcal{H}^k$ indicates the $k$-dimensional \emph{Hausdorff measure}.
		\item Let $V$ be a normed vector space. We write $V^*$ for the \emph{topological dual space}. For $v\in V$ and $v^*\in V^*$ we denote the pairing between $v$ and $v^*$ by $\langle v,v^*\rangle$.
		\item Let $A$ be a topological space. We write $\mathcal{B}(A)$ for the \emph{$\sigma$-algebra of Borel subsets} of $A$.
		\item $C(\mathcal{C})$ denotes the space of real-valued continuous functions on $\mathcal{C}$. The subspace of Lipschitz functions is denoted $C^{0,1}(\mathcal{C})$. Further, we write $C^1(\mathcal{C})$ for the space of functions $f\in C(\mathcal{C})$ which are continuously differentiable on $(-1,1)^n$ such that $f'$ can be continued to an element of $C(\mathcal{C})$.  
		\item Assume that $(\Omega,\mathcal{A},\mu)$ is a measure space. We write $L^1(\mu;\R^n)$ for the \emph{Lebesgue space} of equivalence classes of $\mathcal{A}$-$\mathcal{B}(\R^n)$-measurable functions $f:\Omega\to\R^n$ with $\int_\Omega|f|\,\mathrm{d}\mu<\infty$, where two such functions belong to the same class if they coincide $\mu$-almost everywhere. For $\sigma$-finite $\mu$ this definition corresponds to the quotient of the Lebesgue space $L_1(\mu,\R^n)$ defined in \cite[\S\,2.4.12]{F} by the subspace $\{ f\,|\, f=0\, \mu\text{-almost everywhere} \}$.
		\item Let $\mu:\mathcal{A}\to X$ be a map from a $\sigma$-algebra $\mathcal{A}$ to some set $X$ (e.\,g., a scalar- or vector-valued measure). For any $A\in\mathcal{A}$ we define the \emph{restriction} $\mu\mres A:\mathcal{A}\to X$ of $\mu$ to $A$ by
		\begin{equation*}
		(\mu\mres A) (B)=\mu(A\cap B)
		\quad\text{for all }B\in\mathcal{A}.
		\end{equation*}
		\item A set $S\subset\R^n$ is said to be \emph{countably $k$-rectifiable} (following \cite[p.\,251]{F}) if it is the countable union of $k$-rectifiable sets. More precisely, 
		\begin{equation*}
		S=\bigcup_{i=1}^\infty f_i(A_i),
		\end{equation*}
		where $A_i\subset\R^k$ is bounded and $f_i:A_i\to\R^n$ Lipschitz continuous. If $S$ is countably $k$-rectifiable and $\mathcal{H}^k$-measurable, then we can apply \cite[Lem.\,3.2.18]{F} which yields the existence of bi-Lipschitz functions $g_i:C_i\to S$ with $C_i\subset\R^k$ compact, $T_i=g_i(C_i)$ pairwise disjoint and 
		\begin{equation*}
		S=T_0\cup \bigcup_{i=1}^\infty T_i
		\end{equation*}
		with $\mathcal{H}^k(T_0)=0$. The sequence 
		\begin{equation*}
		S^N=\bigcup_{i=1}^NT_i
		\end{equation*}
		will be called an \emph{approximating sequence for} $S$.
		%\todo[inline]{Can a $k$-rectifiable set indeed also be $k-1$-dimensional?}
		%\todo[inline,color=green]{Yes: by Federer's definition a line segment in $\R^3$ is $2$-rectifiable.}
		\item $\mathcal{M}^k(A)=\{\F:\mathcal{B}(A)\to\R^k\,\sigma\text{-additive}\}$ denotes the set of $\R^k$-valued \emph{Radon measures} on a Polish space $A$. Note that every $\F\in\mathcal{M}^k(A)$ is automatically regular and of bounded variation (cf.\ \cite[p.\,343]{Els} and \cite[XI, 4.5., Thm.\,8]{L}). More specifically, the \emph{total variation measure} $|\F|$ is regular and satisfies $|\F|(A)<\infty$. We indicate the weak-$*$ convergence of Radon measures by $\ws$. The measure $\F\in\mathcal{M}^k(A)$ is called $\mathcal{H}^l$\emph{-diffuse} if $\F(B)=0$ for all $B\in\mathcal{B}(A)$ with $\mathcal{H}^l(B)<\infty$ \cite[p.\,2]{Sil}.
		\item For any closed subset $A\subset\R^n$ we write $\mathcal{DM}^n(A)=\{\F\in\mathcal{M}^n(A)\,|\,\textup{div}(\F)\in\mathcal{M}^1(A) \}$, where $\textup{div}$ denotes the distributional divergence. These vector-valued Radon measures were termed \emph{divergence measure vector fields} in \cite[p.\,2]{Sil}.
		%		\item $\Theta^{*k}(\mu,.)$ denotes the \emph{upper $k$-dimensional density} of a Radon measure  $\mu:\mathcal{B}(\R^n)\to[0,\infty)$ \cite[p.\,13]{LS}. It is for every $x\in\R^n$ given by
		%		\begin{equation*}
		%		\Theta^{*k}(\mu,x)=\limsup_{r\searrow 0}\frac{\mu(B_r(x))}{r^k\omega_k},
		%		\end{equation*}
		%		where $\omega_k$ denotes the volume of the $k$-dimensional unit ball.
		%		\item The \emph{pushforward} $f_\#\mu$ of a measure $\mu$ on $X$ under a measurable map $f : X \to Y$ is the measure defined by $f_\#\mu (A)=\mu (f^{-1}(A))$ for all measurable subsets $A\subset Y$.
		%		\item $p_i:A_1\times\ldots\times A_k\to A_i$ abbreviates the \emph{projection} on the $i$-th component.
		\item We write the \emph{arc length} of a Lipschitz path $\gamma:I\to\mathcal{C}$ as $\text{len}(\gamma)=\int_{I}|\dot{\gamma}|\,\mathrm{d}\mathcal{L}$.
				\item We write $\Gamma^{xy}=\{f:I\to\mathcal{C}\textup{ Lipschitz}\,|\,f(0)=x,f(1)=y\}$ for $x,y\in\mathcal{C}$. %Further, for $x,y\in\mathcal{C}$ and $C>0$ let 
		%		\begin{equation*}
		%		\Gamma_C=\{ f\in\Gamma\,|\,\text{Lip}(f)\leq C\}\text{\qquad and\qquad}\Gamma_C^{xy}=\{f\in\Gamma_C\,|\,f(0)=x,f(1)=y\}.
		%		\end{equation*}
		%		\item For any Lipschitz path $\gamma:[t_1,t_2]\to\mathcal{C}$ we write $\text{md}(\gamma,t_0)$ for the \emph{metric differential} of $\gamma$ at $t_0\in (t_1,t_2)$ \cite[p.\,115]{Kirch}, which can be applied to $u\in\R$ by
		%		\begin{equation*}
		%		\text{md}(\gamma,t_0)(u)=\lim_{h\searrow 0}\frac{|\gamma(t_0+hu)-\gamma(t_0)|}{h}
		%		\end{equation*}
		%		if the limit exists. Further, the \emph{metric derivative} \cite[p.\,24]{AGS} of $\gamma$ at $t_0$ is given by 
		%		\begin{equation*}
		%		|\gamma'|(t_0)=\lim_{h\to 0}\frac{|\gamma(t_0+h)-\gamma(t_0)|}{|h|}
		%		\end{equation*}
		%		if this limit exists. Note that by Rademacher's theorem $\dot{\gamma}(t_0)$ exists for $\mathcal{L}$-almost all $t_0$, and for those $t_0$ we have
		%		\begin{equation*}
		%		|\dot{\gamma}(t_0)|=|\gamma'|(t_0)=\text{md}(\gamma,t_0)(1).
		%		\end{equation*}
		\item We will identify the image of a Lipschitz path $\gamma:I\to\mathcal{C}$ with its parameterization, i.\,e., we simply write $\gamma$ instead of $\gamma(I)$ when no confusion is possible
		(for instance when we integrate over $\gamma(I)$).
		\item For any Lipschitz continuous function $f:(X,d_X)\to (Y,d_Y)$ from one metric space to another we denote the \emph{Lipschitz constant} by $\text{Lip}(f)=\sup_{x_1\neq x_2}d_Y(f(x_1),f(x_2))/d_X(x_1,x_2)$.
				%\item The \emph{Euclidean distance} between two sets $A,B\subset\R^n$ is denoted
				%\begin{equation*}
				%\text{dist}(A,B)=\inf_{x\in A,y\in B}|x-y|.
				%\end{equation*}
				%We use the notation $\text{dist}(x,B)=\text{dist}(B,x)=\text{dist}(\{ x \},B)$.
		%		We write $\text{diam}(A)$ for the diameter of $A$, i.\,e.,
		%		\begin{equation*}
		%		\text{diam}(A)=\sup_{x,y\in A}|x-y|.
		%		\end{equation*}
		%		Moreover, $d_H(A,B)$ denotes the \emph{Hausdorff-distance} between $A$ and $B$, given by
		%		\begin{equation*}
		%		d_H(A,B)=\max\left(\sup_{x\in A}\text{dist}(x,B),\sup_{y\in B}\text{dist}(A,y)\right),
		%		\end{equation*}
		%		where we use the notation $\text{dist}(x,B)=\text{dist}(B,x)=\text{dist}(\{ x \},B)$.
		\item For any set $A$ we write $\iota_A$ for the \emph{indicator function} and $1_A$ for the \emph{characteristic function},
		\begin{equation*}
		\iota_A(x)=\begin{cases*}
		0&if $x\in A$,\\
		\infty&else,
		\end{cases*}
		\qquad
		1_A(x)=\begin{cases*}
		1&if $x\in A$,\\
		0&else.
		\end{cases*}
		\end{equation*}
		\item For $x,y\in\mathcal{C}$ we define $[x,y]$ as the \emph{line segment} $\{ x+t(y-x)\,|\, t\in [0,1] \}$. The sets $(x,y],[x,y)$ and $(x,y)$ are defined similarly, e.\,g., $(x,y]=[x,y]\setminus\{ x \}$.
		\item For any function $f:X\to V$ with values in some normed vector space $(V,\|.\|)$ and $A\subset X$ we write
		\begin{equation*}
		|f|_{\infty,A}=\sup_{x\in A}\|f(x)\|.
		\end{equation*}
		%More precisely, $|.|_{\infty,A}$ corresponds to the \emph{uniform norm} (if restricted to bounded functions).
		\item The \emph{effective domain} of a convex function $f:\R\to\R\cup\{ \infty\}$ is denoted $\text{dom}(f)=\{ x\in\R\,|\, f(x)<\infty \}$.
		\item We write the \emph{convex conjugate} of a function $f:\R\to\R\cup\{ \infty\}$ as
		\begin{equation*}
		f^*(x)=\sup_{m\in\R}mx-f(m).
		\end{equation*}
		\item A sequence $x:\mathbb{N}\to M$ of elements in some set $M$ will be indicated by the notation $(x_i)\subset M$ with $x_i=x(i)$. %If $x_i$ actually stems from a subset $M_i$ we instead speak of a sequence $x_i\in M_i$.
		\item If a sequence of Lipschitz paths $\gamma_j:I\to\R^n$ converges uniformly to some $\gamma$, then we write $\gamma_j\rightrightarrows\gamma$. 
		\item For two functions $f,g:M\to\R$ on some set $M$ we write $f\wedge g=\min\{ f,g\}$ and $f\vee g=\max\{ f,g\}$ for the pointwise minimum/maximum.
		%\item To make the estimates more readable we use the norm $|(x,y)|=|x|+|y|$ on $\R^n\times\R^n$, where at the same time $|.|$ denotes the Euclidean norm on $\R^n$. It will be clear from the context which norm is meant.
	\end{itemize}
	\section{Beckmann formulation of $\boldsymbol{W_{d_{S,a,b}}}$ using duality}
	\label{subs6}
	%\todo[inline,color=yellow]{Note that we use the symbol $L$ for a space and a functional at the same time. Nevertheless, I think this is not confusing. I don't like the idea to change the notation of the functional since we also use it in the first article. Moreover, an alternative notation for the space $L$ would not be likely: we already use $\mathcal{L}$ for a measure and $\mathscr{L}$ in the next main section. The most acceptable idea would be to replace the spaces $L$ and $D$ by $\mathscr{L}$ and $\mathscr{D}$.}
	Let $\mu_+,\mu_-:\mathcal{B}(\mathcal{C})\to[0,1]$ be probability measures. Further, assume that $(S,a,b)$ is a city. Throughout this section, we abbreviate the generalized urban metric $d=d_{S,a,b}$. Recall that $d(x,y)$ is the cost to travel from $x$ to $y$ in terms of the (friction) coefficients $a$ and $b:S\to[0,\infty)$. We extend $b$ to $\mathcal{C}\setminus S$ with value $a$ so that we may write
	\begin{equation*}
	d(x,y)=\inf_{\gamma\in\Gamma^{xy}}\int_{\gamma}b\,\mathrm{d}\mathcal{H}^1.
	\end{equation*}
	In \cref{subs61} we prove \cref{KRformula}, i.\,e., under \cref{assumpmetric} we have
	\begin{equation*}
	W_d(\mu_+,\mu_-)=\sup_{\varphi\in C_d^1}\int_{\mathcal{C} }\varphi\,\mathrm{d}(\mu_+-\mu_-),
	\end{equation*}
	where the set $C_d^1$ is defined by
	\begin{equation*}
	C_d^1=\{\varphi\in C^1(\mathcal{C} )\,|\,|\nabla\varphi |\leq b\text{ in }\mathcal{C}\}.
	\end{equation*}
	To this end, we show that $C_d^1$ is dense (with respect to $|.|_{\infty,\mathcal{C}}$) in 
	\begin{equation*}
	C_d=\{\varphi\in C(\mathcal{C})\,|\,|\varphi(x)-\varphi (y)|\leq d(x,y)\text{ for all }x,y\in\mathcal{C} \}
	\end{equation*}
	and then essentially apply the classical Kantorovich\textendash Rubinstein formula \cite[Thm.\,4.1]{Edw}. Note that the Lipschitz condition $|\varphi(x)-\varphi (y)|\leq d(x,y)$ does not directly imply continuity if $a=\infty$. Further, observe that we have $C_d^1\subset C_d$. Indeed, for $\varphi\in C_d^1$ we get
		\begin{equation*}
		|\varphi (x)-\varphi (y)|\leq\int_\gamma |\nabla\varphi |\,\mathrm{d}\Ha\leq\int_{\gamma}b\,\mathrm{d}\Ha
		\end{equation*}
		for all $x,y\in\mathcal{C}$ and $\gamma\in\Gamma^{xy}$. By taking the infimum over all such $\gamma$ we conclude
		\begin{equation*}
		|\varphi (x)-\varphi (y)|\leq d(x,y),
		\end{equation*}
		which implies $\varphi\in C_d$. 
	In \cref{subs62} we will then prove \cref{BWfinal}, the Beckmann formulation of the Wasserstein distance $W_d(\mu_+,\mu_-)$, under \cref{assumpmetric} or \cref{afinite}.
	We will occasionally write
	\begin{equation*}
	L(\gamma)=\int_Ib(\gamma)|\dot{\gamma}|\mathrm{d}\mathcal{L}
	\end{equation*}
	for the length of a Lipschitz path $\gamma:I\to\mathcal{C}$ associated with $d$.
	
	%For $B\in\mathcal{B}(\R^n)$ and any nonnegative Borel measurable function $f:B\to\R$ we write
	%\begin{equation*}
	%d_f(x,y)=\inf_{\gamma}\int_{\gamma\cap B}f\,\mathrm{d}\Ha+a\Ha(\gamma\setminus B),
	%\end{equation*}
	%where the infimum is taken over Lipschitz paths $\gamma:I\to\R^n$ with $\gamma(0)=x$ and $\gamma(1)=y$. Moreover, we define
	%\begin{equation*}
	%L_f(\eta)=\int_{\eta^{-1}(B)}f(\eta)|\dot{\eta}|\,\mathrm{d}\mathcal{L}+a\int_{\eta^{-1}(\R^n\setminus B)}|\dot{\eta}|\,\mathrm{d}\mathcal{L}
	%\end{equation*}
	%for all Lipschitz paths $\eta:I\to\R^n$. % Note that we consider paths and domains of definition which do not necessarily lie entirely in $\mathcal{C}$. This aspect has technical reasons: we will extend certain functions on $S$ to $\delta$-neighbourhoods of $S$.
	%\todo[inline]{It is somewhat unsatisfying that the $b$ was replaced by $f$, but the $a$ stayed the same, in particular since $b$ was previously extended by $a$; it is also unsatisfying that $B$ is implicitly defined as the domain of $f$ (then e.g. in case of $f=b$ you ask yourself whether the extended or the nonextended $b$ is meant) -- can one not get rid of $B$ and the $a$-term in this notation?}
	%\todo[inline,color=yellow]{We don't need the notations. Define $L$ in this introduction and replace the remark below.}
	\begin{rem}[Formula for $d$]
		It is easy to see that (cf.\ \cite[Lemma\,2.2.1]{LSW})
		\begin{equation*}
		d(x,y)=\inf_{\gamma\in\Gamma^{xy}}L(\gamma).
		\end{equation*}
	\end{rem}
	\subsection{Kantorovich\textendash Rubinstein duality in urban planning}
	\label{subs61}
	%When studying Kantorovich\textendash Rubinstein duality, the following assumption (weak sufficient condition for $d$ to be a metric) is reasonable.
	%\begin{assump}[Positive bounds for friction coefficients]
		%\label{dassump}
		%Throughout \cref{subs61} we assume that $b$ is bounded away from $0$ and $a\in(0,\infty)$.
	%\end{assump}
	In this section we prove \cref{KRformula}. To this end, we elaborate a version of the Stone\textendash Weierstraß theorem to work out that $C_d^1$ is a dense subset of $C_d$. The first step is to provide a point separation statement for $C_d^1$ (\cref{psepa}). We will need to approximate the urban metric $d$. For this purpose, we define $b_k:\mathcal{C}\to[0,\infty)$ by
	\begin{equation*}
	b_k(z)=k\wedge\min_{\mathcal{C}\cap\overline{B}_{1/k}(z)}b
	\end{equation*}
	and abbreviate $d_k=d_{S,a,b_k}$ (which is well-defined if $b_k$ is lower semi-continuous, see \cref{papprox} below). Moreover, we write $L_k$ for the path length associated with $d_k$.
	\begin{prop}[Pointwise approximation of urban metric]
	\label{papprox}
	Let \cref{assumpmetric} be satisfied. Then the $b_k$ are lower semi-continuous, and we have $d_k\to d$ pointwise.
	\end{prop}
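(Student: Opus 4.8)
The plan is to split the statement into two claims: (i) each $b_k$ is lower semi-continuous, and (ii) $d_k \to d$ pointwise. For (i), note that $z \mapsto \min_{\mathcal{C}\cap\overline{B}_{1/k}(z)} b$ is the infimum of the lower semi-continuous function $b$ (extended by $a$ off $S$, which by \cref{assumpmetric} is lower semi-continuous on all of $\mathcal{C}$) over the closed balls $\overline B_{1/k}(z)$; since the extended $b$ is lower semi-continuous on the compact set $\mathcal{C}$ it attains its minimum on each such ball, and a standard argument (using that $z_j \to z$ implies $\overline B_{1/k}(z_j)$ eventually contains any point of $\overline B_{1/k}(z)$ up to small perturbation, together with lower semi-continuity of $b$) shows $z \mapsto \min_{\mathcal{C}\cap\overline B_{1/k}(z)} b$ is lower semi-continuous. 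Taking the minimum with the constant $k$ preserves lower semi-continuity, so $b_k$ is lower semi-continuous and $d_k = d_{S,a,b_k}$ is well-defined.

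For (ii), the key structural observations are: $b_k \le b_{k+1}$ pointwise (the balls shrink and the truncation level rises), and $b_k \nearrow b$ pointwise — the latter because $b$ is bounded away from $0$ (so the truncation at level $k$ eventually does not bind once $k$ exceeds the relevant value of $b$) and because $b$ is lower semi-continuous, so $\liminf_{w \to z} b(w) \ge b(z)$ forces $\min_{\overline B_{1/k}(z)} b \to b(z)$ as $k \to \infty$. Consequently $d_k(x,y) = \inf_{\gamma} \int_\gamma b_k \,\d\Ha$ is non-decreasing in $k$ and bounded above by $d(x,y)$ for every $x,y$, so $\lim_k d_k(x,y)$ exists and is $\le d(x,y)$. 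The substance of the proof is the reverse inequality $\liminf_k d_k(x,y) \ge d(x,y)$.

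To get the reverse inequality, fix $x,y$ and for each $k$ choose a path $\gamma_k \in \Gamma^{xy}$ that is almost optimal for $d_k$, say $L_k(\gamma_k) \le d_k(x,y) + 1/k$. Here I would use the hypothesis that $b$ is bounded away from zero, say $b \ge c_0 > 0$: since $b_k \ge c_0 \wedge k \ge c_0$ for $k$ large, the Euclidean lengths $\mathrm{len}(\gamma_k) \le L_k(\gamma_k)/c_0$ are uniformly bounded (by roughly $d(x,y)/c_0 + 1$), so after reparametrizing by constant speed and passing to a subsequence, Arzelà–Ascoli gives uniform convergence $\gamma_k \rightrightarrows \gamma$ to some $\gamma \in \Gamma^{xy}$ with $\mathrm{len}(\gamma) \le \liminf_k \mathrm{len}(\gamma_k)$. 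Then I need a lower semi-continuity statement for the functionals $\gamma \mapsto \int_\gamma b_k \,\d\Ha$ along this convergence: because $b_k \nearrow b$ and the $b_k$ are lower semi-continuous, one shows $\liminf_k L_k(\gamma_k) \ge L(\gamma) = \int_\gamma b\,\d\Ha \ge d(x,y)$. This step — a Gołąb-type / Reshetnyak-type lower semi-continuity argument for weighted length functionals with a monotone increasing sequence of lower semi-continuous integrands along uniformly converging curves — is the main obstacle, and it is exactly where lower semi-continuity of the ($a$-extended) friction coefficient and boundedness away from zero are both essential. Combining, $\lim_k d_k(x,y) \ge d(x,y)$, which together with the easy upper bound completes the proof.
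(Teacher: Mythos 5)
Your overall skeleton is the same as the paper's: choose almost-optimal paths $\gamma_k$ for $d_k$, use $\inf b>0$ to bound $\mathrm{len}(\gamma_k)$ uniformly, reparameterize and apply Arzel\`a\textendash Ascoli to get $\gamma_k\rightrightarrows\gamma\in\Gamma^{xy}$, and conclude via a lower semi-continuity statement for the weighted lengths. The first part (lower semi-continuity of $b_k$) is also essentially the paper's argument, provided you make the ``standard argument'' precise in the right direction: take a minimizer $\tilde z_j\in\mathcal C\cap\overline B_{1/k}(z_j)$ with $b_k(z_j)=k\wedge b(\tilde z_j)$, extract $\tilde z_j\to\tilde z\in\mathcal C\cap\overline B_{1/k}(z)$, and use lower semi-continuity of the extended $b$; your parenthetical hint (points of $\overline B_{1/k}(z)$ being nearly contained in $\overline B_{1/k}(z_j)$) points the wrong way for a $\liminf$ bound. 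Also, the reason $b_k\nearrow b$ has nothing to do with $b$ being bounded away from zero (the truncation stops binding simply because $k\to\infty$), and in fact this monotone convergence is not needed at all.

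The genuine gap is exactly the step you yourself flag as ``the main obstacle'': the inequality $\liminf_k L_k(\gamma_k)\geq L(\gamma)$ for the diagonal sequence, where the integrand index and the curve index are coupled. You assert that ``one shows'' this by a Go\l\k{a}b/Reshetnyak-type result, but no off-the-shelf statement of that kind is invoked for merely lower semi-continuous, varying integrands, and no proof is given; as written the decisive estimate is a black box. The paper closes this step elementarily by exploiting the specific form of $b_k$ as a truncated ball-minimum: for fixed $t\in I$ write $b_k(\gamma_k(t))=k\wedge b(z_k)$ with $z_k\in\mathcal C\cap\overline B_{1/k}(\gamma_k(t))$; since $\gamma_k\rightrightarrows\gamma$ and $1/k\to0$ we have $z_k\to\gamma(t)$, so lower semi-continuity of the extended $b$ gives $b(\gamma(t))\leq\liminf_k b_k(\gamma_k(t))$, and together with $|\dot\gamma|\leq\liminf_k|\dot{\gamma}_k|$ a.e.\ (arc-length parameterization) Fatou's lemma yields $d(x,y)\leq\int_I b(\gamma)|\dot\gamma|\,\dLe\leq\liminf_k L_k(\gamma_k)\leq\liminf_k d_k(x,y)$. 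Alternatively, your monotonicity observation could be used to freeze the integrand ($L_k(\gamma_k)\geq\int_I b_m(\gamma_k)|\dot\gamma_k|\,\dLe$ for $k\geq m$, then let $m\to\infty$ by monotone convergence), but then you still owe a proof of lower semi-continuity of the fixed-weight functional under uniform convergence, which is comparable in substance to the missing step. Until one of these routes is carried out, the proposal is not a complete proof. (Minor further point: when proving the reverse inequality you should first reduce to the case $\liminf_k d_k(x,y)<\infty$, as the paper does, since your length bound via $d(x,y)$ is vacuous when $d(x,y)=\infty$.)
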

	\begin{proof}
	Fix $k$ and let $(z_j)\subset\mathcal{C}$ be a sequence such that $z_j\to z\in\mathcal{C}$. We can assume that $\liminf_jb_k(z_j)=\lim_jb_k(z_j)<\infty$ so that every subsequence of $j\mapsto b_k(z_j)$ has the same limit. For every $z_j$ there exists some $\tilde{z}_j\in \mathcal{C}\cap \overline{B}_{1/k}(z_j)$ such that $b_k(z_j)=k\wedge b(\tilde{z}_j)$. By the compactness of $ \mathcal{C}$ we have $\tilde{z}_j\to\tilde{z}\in\mathcal{C}$ up to a subsequence. Note that $|\tilde{z}_j-z|\leq 1/k+|z_j-z|$ and thus $\tilde{z}\in \mathcal{C}\cap \overline{B}_{1/k}(z)$ by letting $j\to\infty$. This yields
	\begin{equation*}
	b_k(z)\leq k\wedge b(\tilde{z})\leq\liminf_j(k\wedge b(\tilde{z}_j))=\liminf_jb_k(z_j),
	\end{equation*}
	where we used the lower semi-continuity of $b$. It remains to prove that $d_k\to d$ pointwise. Let $(x,y)\in\mathcal{C}\times\mathcal{C}$ be arbitrary with $x\neq y$. Since $d_k\leq d$ it sufficies to show $d(x,y)\leq\liminf_kd_k(x,y)$ under the assumption that the right hand side is finite. Let $(\gamma_k)\subset\Gamma^{xy}$ be a sequence of Lipschitz paths such that $L_k(\gamma_k)\leq d_k(x,y)+1/k$. Using that $b$ is bounded away from zero we obtain
	\begin{equation*}
	0<\textup{len}(\gamma_k)\inf_\mathcal{C}b\leq L_k(\gamma_k)\leq const.<\infty
	\end{equation*}
	for all $k>\inf b$. Thus, the lengths of the $\gamma_k$ are uniformly bounded. We can thus reparameterize each $\gamma_k$ by arc length and get $\gamma_k\rightrightarrows\gamma:I\to\mathcal{C}$ up to a subsequence by the Arzel\`a\textendash Ascoli theorem. This also yields $|\dot{\gamma}|\leq\liminf_k|\dot{\gamma}_k|$ almost everywhere in $I$. Now fix any $t\in I$. Once more we get $b_k(\gamma_k(t))=k\wedge b(z_k)$ for some $z_k\in\mathcal{C}\cap\overline{B}_{1/k}(\gamma_k(t))$. Clearly, we must have $z_k\to\gamma(t)$. Using again the lower semi-continuity of $b$ we estimate
	\begin{equation*}
	b(\gamma(t))\leq\liminf_k(k\wedge b(z_k))=\liminf_kb_k(\gamma_k(t)).
	\end{equation*}
	Finally, using Fatou's lemma
	\begin{equation*}
	d(x,y)\leq\int_Ib(\gamma)|\dot{\gamma}|\mathrm{d}\mathcal{L}\leq\int_I\liminf_kb_k(\gamma_k)\liminf_k|\dot{\gamma}_k|\mathrm{d}\mathcal{L}\leq \liminf_k\int_Ib_k(\gamma_k)|\dot{\gamma}_k|\mathrm{d}\mathcal{L}=\liminf_kL_k(\gamma_k)\leq\liminf_kd_k(x,y)+\frac{1}{k},
	\end{equation*}
	which shows the pointwise convergence $d_k\to d$.
	\end{proof}
	The following two examples illustrate that the conditions on $b$ in \cref{papprox} cannot be dropped.
	\begin{examp}[Counterexample for $b$ not lower semi-continuous in $\mathcal{C}$]\label{exm:pointwiseDistanceApproximationNotLSC}
	\label{bnotlsc}
	Take $S=\mathbb{Q}\cap [0,1]$ and $a,b\in (0,\infty ]$ with $b<a$, which implies that the extension of $b$ with value $a$ to $\mathcal{C}$ is not lower semi-continuous (see \cref{sf:bnotlowsc}). Then, we obtain $d(0,1)=a>b=d_k(0,1)$ since $b_k=b$ in $[0,1]$ for all $k$.
	\end{examp}
	\begin{figure}[t]
		\centering
		\begin{subfigure}[b]{0.5\textwidth}
			\centering
			\begin{tikzpicture}[scale=3.6]
			\foreach \x in {0,1,...,50}
			\node[circle,fill=black,inner sep=0.8pt,minimum size=0.01cm] at ({\x/50},0) {};
			\foreach \y in {0,1,...,50}
			\node[circle,fill=black,inner sep=0.8pt,minimum size=0.01cm] at ({\y/50},0.3) {};
			\draw [color=black, line width = 0.07cm, domain=-0:1] plot (\x,{1});
			\node[left] at (0,1) {$a$};
			\draw[-stealth] (-0.1,0) -- (1.1,0);
			\draw[-stealth] (0,-0.1) -- (0,1.1) node[left] {$b$};
			\draw (1,-0.013) -- (1,0.013) node[below] {$1$};
			\node[label={-90: $S=\mathbb{Q}\cap[0,1]$}] at (0.5,0) {};
			\end{tikzpicture}
			\caption{Extended $b$ not lower semi-continuous.}
			\label{sf:bnotlowsc}
		\end{subfigure}\begin{subfigure}[b]{0.5\textwidth}
			\centering
			\begin{tikzpicture}[scale=1.4]
			\draw[line width=0.04cm] (0,1) -- (1,2) node[above,xshift=-1.4cm,yshift=-0.6cm] {$(S_1,b=1)$};
			\draw[gray] (0,-1) -- (0,1);
			\draw[gray] (1,{sin(1 r)}) -- (1,2);
			\draw[gray,smooth,samples=1000, domain=0.01:1] plot(\x, {sin(1/\x r)}) node[right] {$(S_2,b=0)$};
			\node[circle,fill=gray,inner sep=0.5pt,minimum size=0.1cm,label={180: $x$}] at (0,1) {};
			\node[circle,fill=gray,inner sep=0.5pt,minimum size=0.1cm,label={0: $y$}] at (1,2) {};
			\end{tikzpicture}	
			\caption{$b$ not bounded away from zero.}
			\label{sf:bGetsZero}
		\end{subfigure}
		\caption{Sketches for \cref{bnotlsc,bbounded}.}
	\end{figure}
	\begin{examp}[Counterexample for $b$ not bounded away from zero]\label{exm:pointwiseDistanceApproximationNotPositive}
	\label{bbounded}
	Let $x=(0,1),y=(1,2)$, and $S=S_1\cup S_2$ with $S_1=(x,y)$ and $ S_2=[-x,x]\cup\{ (t,\sin(1/t))\,|\, t\in(0,1]\}\cup[(1,\sin(1)),y]$, see \cref{sf:bGetsZero}. Set $a=\infty$, $b=1$ on $S_1$, and $b=0$ on $S_2$. Note that $b$ (continued with value $a$ outside $S$) is lower semi-continuous. Moreover, we have $d(x,y)=\sqrt{2}$, because it is impossible to reach $y$ from $x$ using $S_2$ without travelling on the complement of $S$, which produces an infinite cost. However, we have $d_k(x,y)=0$: Choose $t_k\in(0,1]$ such that $z_k=(t_k,\sin(1/t_k))\in B_{1/k}(x)$. Then any injective Lipschitz path $\gamma$ on $[x,z_k]\cup\{ (t,\sin(t))\,|\, t\in(t_k,1] \}\cup [(1,\sin(1)),y]$ from $x$ to $y$ satisfies $L_k(\gamma)=0$.
	\end{examp}
	An immediate consequence is the lower semi-continuity of the urban metric.
	\begin{corr}[Lower semi-continuity of urban metric]\label{thm:lscMetric}
	Under \cref{assumpmetric} $d$ is lower semi-continuous.
	\end{corr}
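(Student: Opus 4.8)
\textbf{Proof plan for \cref{thm:lscMetric}.}

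The plan is to deduce lower semi-continuity of $d$ from the pointwise approximation $d_k\to d$ established in \cref{papprox}, by showing that each $d_k$ is lower semi-continuous and that the convergence is monotone increasing, so that $d$ is a pointwise supremum of lower semi-continuous functions.

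First I would observe that $d=\sup_k d_k$. Indeed, the functions $b_k$ satisfy $b_k\le b_{k+1}$ (the minimum over the shrinking ball $\overline B_{1/k}(z)$ can only increase, and the truncation level $k$ increases), hence $d_k\le d_{k+1}$; combined with $d_k\le d$ and the pointwise convergence $d_k\to d$ from \cref{papprox}, this gives $d(x,y)=\sup_k d_k(x,y)=\lim_k d_k(x,y)$ for all $x,y\in\mathcal C$.

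Second, I would prove that each $d_k$ is lower semi-continuous on $\mathcal C\times\mathcal C$. Fix $k$, take $(x_j,y_j)\to(x,y)$ in $\mathcal C\times\mathcal C$ with $\liminf_j d_k(x_j,y_j)<\infty$, and (passing to a subsequence realising the liminf) choose nearly optimal paths $\gamma_j\in\Gamma^{x_jy_j}$ with $L_k(\gamma_j)\le d_k(x_j,y_j)+1/j$. Since $b_k$ is bounded below by $\inf_{\mathcal C}b>0$ (by \cref{assumpmetric}), the arc lengths $\operatorname{len}(\gamma_j)$ are uniformly bounded, so after arc-length reparametrisation Arzel\`a\textendash Ascoli yields $\gamma_j\rightrightarrows\gamma\in\Gamma^{xy}$ (up to a further subsequence) with $|\dot\gamma|\le\liminf_j|\dot\gamma_j|$ a.e. Using the lower semi-continuity of $b_k$ (\cref{papprox}) one gets $b_k(\gamma(t))\le\liminf_j b_k(\gamma_j(t))$ for every $t$, and Fatou's lemma then gives $d_k(x,y)\le L_k(\gamma)\le\liminf_j L_k(\gamma_j)\le\liminf_j d_k(x_j,y_j)$. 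This is essentially the same argument as the second half of the proof of \cref{papprox}, applied to the fixed metric $d_k$ rather than along a sequence of metrics. Finally, a pointwise supremum of lower semi-continuous functions is lower semi-continuous, so $d=\sup_k d_k$ is lower semi-continuous.

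The main obstacle is the lower semi-continuity of the individual $d_k$: one must produce a limiting competitor path and pass the length functional to the limit, which requires both the uniform length bound (hence the hypothesis that $b$ is bounded away from zero) and the already-established lower semi-continuity of $b_k$. Everything else — monotonicity of $b_k$ in $k$ and the stability of lower semi-continuity under suprema — is routine.
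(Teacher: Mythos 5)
Your proposal is correct and follows the same skeleton as the paper: write $d=\sup_k d_k$ (using $d_k\le d$ and the pointwise convergence from \cref{papprox}; the monotonicity in $k$ you note is true but not even needed) and conclude by stability of lower semi-continuity under pointwise suprema. Where you diverge is in the key sub-step, the lower semi-continuity of each $d_k$: you re-run the direct-method argument from the second half of the proof of \cref{papprox} (nearly optimal paths, uniform length bound from $\inf b>0$, Arzel\`a\textendash Ascoli, Fatou). That argument is sound, but it is much heavier than necessary, and it re-imports the hypothesis that $b$ is bounded away from zero at a point where it is not needed. The paper instead observes that each $d_k$ is Lipschitz continuous: since $b_k\le k$ everywhere, the straight segment gives $d_k(x,y)\le k|x-y|$, and the triangle inequality then yields $|d_k(x,y)-d_k(x',y')|\le k\bigl(|x-x'|+|y-y'|\bigr)$, so $d_k$ is continuous (in particular lower semi-continuous) for free \textemdash{} this is precisely the payoff of the truncation at level $k$ in the definition of $b_k$. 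So your proof is valid, but the positivity of $b$ and the compactness machinery are genuinely needed only for the convergence $d_k\to d$ in \cref{papprox}, not for the semi-continuity of the approximants; recognizing this shortens the corollary's proof to one line.
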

	\begin{proof}
	The (pseudo-)metric $d$ is the pointwise supremum of the Lipschitz-continuous metrics $d_k$.
	\end{proof}
	\begin{examp}[Counterexample if $b$ is not bounded away from zero]
	For $a<\infty$ it is not difficult to see that the urban metric $d$ is continuous even without \cref{assumpmetric} \cite[Prop.\,2.2.3]{LSW}. In the situation of \cref{exm:pointwiseDistanceApproximationNotPositive}, where $b$ was not bounded away from zero and $a=\infty$, we get that $d$ is not lower semi-continuous. Indeed, letting $(x_j)\subset S_2\backslash [-x,x]$ with $x_j\to x$ we obtain
	\begin{equation*}
	d(x,y)=\sqrt{2}>0=\liminf_jd(x_j,y).
	\end{equation*}	 
	\end{examp}
	%\todo[inline,color=yellow]{This works for $a=\infty$ and $b_k(z)=k\wedge\min_{\overline{B}_{1/k}(z)}b$ as well as for $b_k(z)=\min_{\overline{B}_{1/k}(z)}b$. In the second case, the argument is not the same and $d_k$ is in general not Lipschitz.}
	\begin{prop}[Point separation with tolerance]
	\label{psepa}
	Let \cref{assumpmetric} be satisfied. Then for arbitrary $x,y\in\mathcal{C}$ and $t_1,t_2\in\R$ with $|t_2-t_1|<d(x,y)$ there is a function $f\in C_d^1$ such that $f(x)\leq t_1$ and $f(y)\geq t_2$.
	\end{prop}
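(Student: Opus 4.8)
The plan is to reduce the claim to a smoothing problem for an urban distance function and to carry out the smoothing only after replacing $b$ by a \emph{continuous} friction coefficient that dominates it from below with a small multiplicative slack. First the easy reductions: if $t_2\le t_1$ the constant $f\equiv t_1$ already works, and if $d(x,y)=0$ there is nothing to prove; so assume $t_2>t_1$ and put $\delta:=t_2-t_1\in(0,d(x,y))$. By \cref{assumpmetric} the constant $c_0:=\inf_\mathcal{C}b$ is strictly positive and finite. I then want a \emph{Lipschitz} function $\beta\colon\mathcal{C}\to[c_0,\infty)$ with $\beta\le b$ on $\mathcal{C}$ and $d^\beta(x,y)>\delta$, where $d^\beta(x,y):=\inf_{\gamma\in\Gamma^{xy}}\int_\gamma\beta\,\dHa$. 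To get it, use \cref{papprox} to fix $k$ with $d_k(x,y)=d^{b_k}(x,y)>\delta$; since $b_k$ is bounded and lower semi-continuous, the lower inf-convolutions $\beta_l(z):=\inf_{w\in\mathcal{C}}\bigl(b_k(w)+l\,|z-w|\bigr)$ are $l$-Lipschitz, satisfy $c_0\le\beta_l\le b_k\le b$, and increase pointwise to $b_k$; an Arzel\`a\textendash Ascoli and Fatou argument exactly as in the proof of \cref{papprox} gives $d^{\beta_l}(x,y)\to d^{b_k}(x,y)$, so $\beta:=\beta_l$ works for $l$ large. Fix such a $\beta$.

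\textbf{A Lipschitz competitor on a larger cube.} To avoid boundary effects in the mollification, put $\mathcal{C}_2:=[-2,2]^n$ and extend $\beta$ to $\mathcal{C}_2$ by $\tilde\beta:=\beta\circ P$, where $P$ is the ($1$-Lipschitz) nearest-point projection onto the convex set $\mathcal{C}$; then $\tilde\beta$ is Lipschitz, bounded, $\ge c_0$, and $\tilde\beta=\beta$ on $\mathcal{C}$. Projecting any Lipschitz path in $\mathcal{C}_2$ joining $x,y\in\mathcal{C}$ onto $\mathcal{C}$ does not increase its $\tilde\beta$-weighted length (because $\tilde\beta\ge0$ and $|\dot{(P\circ\gamma)}|\le|\dot\gamma|$), so $d^{\tilde\beta}(x,y)=d^\beta(x,y)>\delta$. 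Choose $\lambda\in\bigl(\delta/d^\beta(x,y),\,1\bigr)$ and $\delta''\in\bigl(\delta,\,\lambda\,d^\beta(x,y)\bigr)$, and set
\begin{equation*}
G(z):=\bigl(t_1+d^{\lambda\tilde\beta}(x,z)\bigr)\wedge(t_1+\delta'')\qquad(z\in\mathcal{C}_2).
\end{equation*}
Since $\lambda\tilde\beta$ is continuous and bounded, $G$ is Lipschitz on $\mathcal{C}_2$; testing $d^{\lambda\tilde\beta}(z,z+hv)$ with the straight segment gives $d^{\lambda\tilde\beta}(z,z+hv)\le\int_0^h\lambda\tilde\beta(z+sv)\,\d s$, and dividing by $h$, letting $h\downarrow0$ (continuity of $\tilde\beta$) and invoking Rademacher's theorem yields $|\nabla G|\le\lambda\tilde\beta$ a.e.\ in $\mathrm{int}\,\mathcal{C}_2$ (with $\nabla G$ vanishing a.e.\ on the set where the cap is active). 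Moreover $G(x)=t_1$, and $G(y)=t_1+\delta''$ because $d^{\lambda\tilde\beta}(x,y)=\lambda\,d^\beta(x,y)>\delta''$.

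\textbf{Mollification and translation.} Let $f_\varepsilon:=G*\rho_\varepsilon$ for a standard mollifier of radius $\varepsilon<1$, so that $f_\varepsilon\in C^\infty$ on a neighbourhood of $\overline{\mathcal{C}}$. For $z\in\mathcal{C}$ and $|w|<\varepsilon$ one has $\tilde\beta(z-w)=\beta\bigl(P(z-w)\bigr)\le\beta(z)+\operatorname{Lip}(\beta)\,\varepsilon$ since $P(z)=z$, whence
\begin{equation*}
|\nabla f_\varepsilon(z)|\le(\lambda\tilde\beta*\rho_\varepsilon)(z)\le\lambda\beta(z)+\lambda\operatorname{Lip}(\beta)\,\varepsilon\le\lambda b(z)+\lambda\operatorname{Lip}(\beta)\,\varepsilon\le b(z),
\end{equation*}
provided $\varepsilon$ is small enough that $\lambda\operatorname{Lip}(\beta)\,\varepsilon\le(1-\lambda)c_0\le(1-\lambda)b(z)$; hence $|\nabla f_\varepsilon|\le b$ on $\mathcal{C}$. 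As $G$ is Lipschitz, $f_\varepsilon(x)\to t_1$ and $f_\varepsilon(y)\to t_1+\delta''$ as $\varepsilon\downarrow0$. Thus $f:=f_\varepsilon-f_\varepsilon(x)+t_1$ lies in $C^1(\mathcal{C})$ with $|\nabla f|=|\nabla f_\varepsilon|\le b$ on $\mathcal{C}$, i.e.\ $f\in C_d^1$; and $f(x)=t_1\le t_1$, while $f(y)=f_\varepsilon(y)-f_\varepsilon(x)+t_1\to t_1+\delta''>t_1+\delta=t_2$, so $f(y)\ge t_2$ once $\varepsilon$ is small. This $f$ has the required properties.

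\textbf{Main obstacle.} The delicate step is the mollification: because $b$ is only lower semi-continuous it may jump upward, so a direct mollification of a distance-type function — whose gradient obeys the constraint only almost everywhere — need not satisfy $|\nabla f|\le b$, since the mollified gradient is controlled only by $b*\rho_\varepsilon$, which can far exceed $b$. The two-fold cushion above is what resolves this: passing to a \emph{continuous} under-approximant $\beta$ makes $\lambda\tilde\beta*\rho_\varepsilon$ stay uniformly $O(\varepsilon)$-close to $\lambda\beta$, and the rescaling by $\lambda<1$ — harmless because $\delta<d(x,y)$ leaves room — produces the slack $(1-\lambda)b\ge(1-\lambda)c_0>0$ that absorbs this error; here the strict positivity of $b$ in \cref{assumpmetric} is essential. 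The two remaining ingredients, the metric convergence $d^{\beta_l}(x,y)\to d^{b_k}(x,y)$ and the identity $d^{\tilde\beta}(x,y)=d^\beta(x,y)$ for the boundary extension, are more routine variants of \cref{papprox}.
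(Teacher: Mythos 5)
Your argument is correct, and its skeleton is the same as the paper's: reduce to $t_2-t_1\in(0,d(x,y))$, pick $k$ via \cref{papprox} so that the approximate distance from $x$ already separates $x$ and $y$, mollify that distance function, and use $\inf_{\mathcal C}b>0$ to keep everything under control. Where you differ is in how the gradient constraint survives mollification. The paper mollifies $g_k(z)=d_k(x,z)$ directly and exploits the ball-infimum structure of $b_k$: for $\varepsilon<1/k$ one has $b_k\le k\wedge b(z)\le b(z)$ on all of $B_\varepsilon(z)$, so $|\nabla g_k|\le b(z)$ a.e.\ on that ball and the mollified gradient is bounded by $b(z)$ with \emph{no} slack. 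You instead insert a second approximation layer (the Lipschitz inf-convolutions $\beta_l$ of $b_k$), rescale by $\lambda<1$, extend to $[-2,2]^n$ by nearest-point projection, and absorb the $O(\varepsilon)$ mollification error $\lambda\operatorname{Lip}(\beta)\varepsilon$ into the slack $(1-\lambda)\inf_{\mathcal C}b$; the cap at $t_1+\delta''$ is harmless but unnecessary. This buys a more standard toolkit (Moreau envelope plus uniform Lipschitz bounds) at the price of an extra convergence step, $d^{\beta_l}(x,y)\to d^{b_k}(x,y)$, which you only assert ``as in \cref{papprox}''; it does hold, since the lengths of near-optimal paths are uniformly bounded ($\beta_l\ge\inf_{\mathcal C}b_k>0$) and $\liminf_l\beta_l(z_l)\ge b_k(z)$ whenever $z_l\to z$ (take minimizers $w_l$ in the inf-convolution, note $l|z_l-w_l|$ is bounded so $w_l\to z$, and use lower semi-continuity of $b_k$), but you should spell this out rather than appeal to analogy. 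Two minor slips: $c_0=\inf_{\mathcal C}b$ need not be finite (e.g.\ $a=\infty$ and $S\cap\mathcal C$ small), though only $c_0>0$ is actually used; and $\beta_l\ge c_0$ should read $\beta_l\ge k\wedge c_0$, which is all you need.
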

	%\todo[inline,color=yellow]{$g_k$ is not Lipschitz for $a=\infty$ and $b_k(z)=\min_{\overline{B}_{1/k}(z)}b$. For $a=\infty$ and $b_k(z)=k\wedge\min_{\overline{B}_{1/k}(z)}b$ we get $|\nabla f|\leq k\wedge b$ for some $k$. Hence, we have $f\in D_k=\{ g\in C^1(\mathcal{C})\,|\,|\nabla g|\leq k\wedge b\}\subset D$, which improves the previous result.}
	\begin{proof}
	If $t_1>t_2$, then one can take $f\equiv(t_1+t_2)/2$, so we may assume $t_2\geq t_1$.
	Further, it is enough to prove the claim for $(t_1,t_2)$ replaced by $(0,t)$ with $0<t<d(x,y)$. Indeed,
	given some $\tilde f\in C_d^1$ with $\tilde f(x)\leq0$ and $\tilde f(y)\geq t$ for the choice $t=t_2-t_1$,
	the desired $f$ is obtained as $t_1+\tilde f$.
	
	We will now mollify the functions $g_k(z)=d_k(x,z)$. For this purpose, we assume that $g_k$ is defined on $\R^n$, or equivalently, for the rest of the proof we assume that 
	\begin{center}
	$b$ is extended with value $a$ to $\R^n$,\qquad $b_k(z)=k\wedge\min_{\overline{B}_{1/k}(z)}b$,\qquad and\qquad $d_k(z_1,z_2)=\inf_\gamma\int_Ib_k(\gamma)\mathrm{d}\mathcal{L}$,
	\end{center}		
	 where the infimum is over Lipschitz paths $\gamma:I\to\R^n$ with $\gamma(0)=z_1$ and $\gamma(1)=z_2$.	
	 Clearly, the functions $g_k$ are Lipschitz with $\textup{Lip}(g_k)\leq k$. Now let $\eta$ be a smooth mollifier on $\R^n$ with unit integral and support on $B_1(0)$. We define $g_k^\varepsilon=\eta_\varepsilon*g_k$ for the scalings $\eta_\varepsilon(z)=\varepsilon^{-n}\eta(\varepsilon^{-1}z)$. Then the functions $f_k^\varepsilon=g_k^\varepsilon-g_k^\varepsilon(x)$ satisfiy $f_k^\varepsilon(x)=0$ and $f_k^\varepsilon (y)\to g_k(y)$ as $\varepsilon\to0$ due to the pointwise convergence $g_k^\varepsilon\to g_k$ and $g_k(x)=0$. Now fix $k$ sufficiently large so that $t<g_k(y)=d_k(x,y)<d(x,y)$ (possible by \cref{papprox}) and then $\varepsilon$ small enough such that $t\leq f_k^\varepsilon (y)$. It remains to prove that the restriction of $f=f_k^\varepsilon$ to $\mathcal{C}$ is an element of $C_d^1$. By definition $f$ is smooth and we have $\nabla f=\nabla g_k^\varepsilon$. Let $z\in\mathcal{C}$ be arbitrary and choose $\varepsilon<1/k$ (if not a priori satisfied). We then have $b_k\leq k\wedge b(z)\leq b(z)$ in $B_\varepsilon(z)$. Now let $y\in B_\varepsilon(z)$ be any point of total differentiability of $g_k$. Then for all $\nu\in\mathcal{S}^{n-1}$ and $h>0$ sufficiently small such that $[y,y+h\nu]\subset B_\varepsilon(z)$ we have
\begin{equation*}
g_k(y+h\nu)-g_k(y)\leq d_k(y,y+h\nu)\leq\int_{[y,y+h\nu]}b_k\,\mathrm{d}\Ha\leq hb(z)
\end{equation*} 
by the triangle inequality and the choice of $\varepsilon,h$. It follows that the directional derivative $\partial_\nu g_k(y)$ is bounded by $b(z)$. By the arbitrariness of $y$ and $\nu$ we thus obtain $|\nabla g_k(y)|\leq b(z)$ for $\Le^n$-almost every $y\in B_\varepsilon(z)$. Using this we obtain the final estimate
\begin{equation*}
|\nabla f(z)|=|\nabla g_k^\varepsilon(z)|\leq\int_{B_\varepsilon(z)}\eta_\varepsilon(z-y)|\nabla g_k(y)|\mathrm{d}\Le^n(y)\leq b(z)\int_{B_\varepsilon(z)}\eta_\varepsilon(z-y)\mathrm{d}\Le^n(y)=b(z),
\end{equation*}	
which shows that $f$ is an element of $C_d^1$.
	\end{proof}
	\begin{rem}[Regularity of $f$]
	Following the proof of \cref{psepa} we actually have $f\in C_{d_k}^1\subset C_d^1$. It can be shown that \cref{papprox} and \cref{thm:lscMetric} are also true for $b_k$ replaced by $z\mapsto\min_{\mathcal{C}\cap\overline{B}_{1/k}(z)}b$. However, using these functions, the mollfications of $z\mapsto d_k(x,z)$, which we employ in the following, would not be real-valued.
	\end{rem}
	\notinclude{
	\begin{rem}[Approximation of urban metric]
	Another strategy to approximate the urban metric (cf.\ \cref{papprox}) to provide an appropriate point separation statement as in \cref{psepa} is to approximate the friction coefficient $b|_S$ from below using Lipschitz continuous functions $b_k:S\to[0,\infty)$ (for example defined via the Moreau envelope). Then, if $b$ is integrable with respect to $\Ha\mres S$, it is straightforward to check that $d_{b_k}\rightrightarrows d$ on the set of points where $d$ is finite. Further, one may continue the $b_k$ to Lipschitz continuous functions $b_k^\delta$ on the tubular neighbourhood $S_\delta=\{ z\in\R^n\,|\,\textup{dist}(z,S)\leq\delta \}$ for $\delta>0$ and prove that $d_{b_k^\delta}\rightrightarrows d_{b_k}$ for $\delta\to 0$ (which can for example be realized if $0<\inf b\leq a<\infty$ and $S$ closed). The mollification approach in \cref{psepa} can then be applied to the function $z\mapsto d_{b_k^\delta}(x,z)$.
	\end{rem}
	}%\notinclude
	We can now show a version of the Stone\textendash Weierstraß theorem which states that $C_d^1$ is a dense subset of $C_d$. We follow the proofs in \cite[Appendix A]{SW}.
	For $f,g\in C^1(\mathcal{C})$ we will need to approximate $f\wedge g$ and $f\vee g$ by smooth functions,
	for which we require the following property of mollified Heaviside step functions.
	\begin{lem}[{{\cite[Lem.\,A.3]{SW}}}]
		\label{heaviside}
		For all $\delta>0$ there is a monotone (smoothed Heaviside step) function $H_\delta\in C^\infty (\R)$ such that $H_\delta=0$ on $(-\infty ,-\delta]$, $H_\delta=1$ on $[\delta,\infty )$ and $|tH_\delta'(t)|\leq\delta$ for all $t\in\R$.
	\end{lem}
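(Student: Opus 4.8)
The plan is to build $H_\delta$ on a \emph{logarithmic scale}. The point is that the awkward constraint $|tH_\delta'(t)|\le\delta$ is nothing but a Lipschitz bound in the variable $\log|t|$: if, for $t>0$, one writes $H_\delta(t)=G(\log t)$, then the chain rule gives $tH_\delta'(t)=G'(\log t)$, so the requirement becomes simply $|G'|\le\delta$. Since $G$ only has to increase by $\tfrac12$ (once we symmetrise so that $H_\delta(-t)=1-H_\delta(t)$), this is easy to arrange by letting the transition of $G$ spread over a log-interval of length of order $1/\delta$. Before doing so I would record why the naive approach fails: one cannot simply mollify a Heaviside jump on the scale $\delta$, because $\int_\R H_\delta'=1$ over an interval of length $2\delta$ forces $H_\delta'$ to be of order $1/\delta$ at points where $|t|$ is of order $\delta$, and there $|tH_\delta'(t)|$ is of order one, violating the bound for small $\delta$. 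The logarithmic rescaling cures this by pushing the transition down to an exponentially small scale (of order $e^{-c/\delta}$ for a fixed constant $c$).

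Concretely, I would fix once and for all a non-decreasing template $\chi\in C^\infty(\R)$ with $\chi\equiv0$ on $(-\infty,0]$ and $\chi\equiv1$ on $[1,\infty)$ (a standard mollified ramp), and set $M=\sup_\R\chi'$, which is finite since $\chi'$ is continuous with support in $[0,1]$. Then I would define
\begin{equation*}
H_\delta(t)=\tfrac12+\tfrac12\,\chi\!\left(\tfrac{2\delta}{M}\log(t/\delta)+1\right)\quad(t>0),\qquad H_\delta(0)=\tfrac12,\qquad H_\delta(t)=1-H_\delta(-t)\quad(t<0),
\end{equation*}
and verify the claimed properties by routine checks: (i) the argument of $\chi$ is $\le0$ exactly for $t\le t_0:=\delta\,e^{-M/(2\delta)}$, so $H_\delta\equiv\tfrac12$ on the neighbourhood $(-t_0,t_0)$ of the origin and is in particular $C^\infty$ there, while away from $0$ it is smooth as a composition of smooth maps, hence $H_\delta\in C^\infty(\R)$; (ii) for $t\ge\delta$ the argument is $\ge1$, so $H_\delta\equiv1$ on $[\delta,\infty)$, and by the reflection $H_\delta\equiv0$ on $(-\infty,-\delta]$; (iii) monotonicity is immediate from $\chi'\ge0$ (and the reflection for $t<0$); (iv) for $t>0$ one computes $tH_\delta'(t)=\tfrac{\delta}{M}\,\chi'(\tfrac{2\delta}{M}\log(t/\delta)+1)$, whose modulus is at most $\tfrac{\delta}{M}\cdot M=\delta$, for $t<0$ the same bound follows by reflection symmetry, and for $|t|\le t_0$ the derivative vanishes.

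The only genuinely non-mechanical part is the first step, namely recognising that the weight $|t|$ forces the transition to live at an exponentially small scale and hence passing to the variable $\log|t|$; once that idea is in place the construction essentially writes itself and every remaining step is elementary bookkeeping about the fixed function $\chi$. In particular, the potential difficulty of gluing the three pieces of $H_\delta$ together in a $C^\infty$ fashion disappears because all of them are locally constant near $t=0$. (If one prefers to avoid logarithms in the final write-up, the same $H_\delta$ can instead be specified implicitly through its derivative, but the log-scale picture is what makes the bound $|tH_\delta'(t)|\le\delta$ transparent.)
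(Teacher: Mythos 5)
Your construction is correct: all four checks go through as you state them. For $t>0$ one indeed gets $tH_\delta'(t)=\tfrac{\delta}{M}\chi'\bigl(\tfrac{2\delta}{M}\log(t/\delta)+1\bigr)\in[0,\delta]$, the function is identically $\tfrac12$ on the interval $[-t_0,t_0]$ with $t_0=\delta e^{-M/(2\delta)}$, so the three pieces glue to a globally $C^\infty$, non-decreasing function with the required boundary behaviour on $(-\infty,-\delta]$ and $[\delta,\infty)$; your preliminary remark that a plain mollification at scale $\delta$ cannot work is also accurate, since there $|tH_\delta'(t)|$ is of order one near $|t|\sim\delta$. Note that the paper itself does not prove this lemma but only cites \cite[Lem.\,A.3]{SW}, so there is no in-paper argument to compare against; your log-scale construction (equivalently, the observation that $|tH_\delta'(t)|\leq\delta$ is a Lipschitz bound of size $\delta$ in the variable $\log|t|$, forcing the transition onto an exponentially small scale $\sim e^{-c/\delta}$) is a correct, self-contained substitute for that citation and captures exactly why such functions exist.
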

	Let $H_\delta$ be as in \cref{heaviside}. We define 
	\begin{equation*}
	f\wedge_\delta g=H_\delta (f-g)g+(1-H_\delta(f-g))f\qquad\text{and}\qquad
	f\vee_\delta g=H_\delta (f-g)f+(1-H_\delta (f-g))g
	\end{equation*}
	for all $\delta >0$ and $f,g\in C^1(\mathcal{C})$. Then $f\wedge_\delta g,f\vee_\delta g\in C^1(\mathcal{C})$ are approximations of $f\wedge g$ and $f\vee g$ with respect to $|.|_{\infty,\mathcal{C}}$.
	%\todo[inline,color=yellow]{This statement works (of course) also with $D$ replaced by $D_k$.}
	\begin{lem}[{{Smooth min and max operation, cf.\ \cite[Lem.\,A.4]{SW}}}]
		\label{minmaxop}
		Let $\delta>0$ and $f,g\in C_d^1$. Then we have $f\wedge_\delta g,f\vee_\delta g\in (1+2\delta) C_d^1$ as well as $|f\wedge_\delta g-f\wedge g|,|f\vee_\delta g-f\vee g|\leq\delta$ in $\mathcal{C}$.
	\end{lem}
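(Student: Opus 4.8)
The plan is to reduce everything to the two elementary identities
\[
f\wedge_\delta g = f - H_\delta(f-g)\,(f-g), \qquad f\vee_\delta g = g + H_\delta(f-g)\,(f-g),
\]
which follow at once from the definitions of $f\wedge_\delta g$ and $f\vee_\delta g$ and which already show that $f\wedge_\delta g,f\vee_\delta g\in C^1(\mathcal{C})$, since $H_\delta\in C^\infty(\R)$ by \cref{heaviside} and $f,g\in C^1(\mathcal{C})$. Writing $u=f-g$ and differentiating, one obtains
\begin{align*}
\nabla(f\wedge_\delta g) &= \bigl(1-H_\delta(u)\bigr)\nabla f + H_\delta(u)\nabla g - u\,H_\delta'(u)\,(\nabla f-\nabla g),\\
\nabla(f\vee_\delta g) &= H_\delta(u)\nabla f + \bigl(1-H_\delta(u)\bigr)\nabla g + u\,H_\delta'(u)\,(\nabla f-\nabla g),
\end{align*}
so the two gradients differ only in how the weights $H_\delta(u)$, $1-H_\delta(u)$ are distributed onto $\nabla f,\nabla g$ and in the sign of the last term.

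The next step is to bound these gradients pointwise on $\mathcal{C}$, recalling that $b$ is extended with the value $a$ on $\mathcal{C}\setminus S$ so that $|\nabla f|,|\nabla g|\le b$ holds throughout $\mathcal{C}$. Since $H_\delta$ takes values in $[0,1]$, the coefficients $H_\delta(u)$ and $1-H_\delta(u)$ are nonnegative with sum $1$, hence the first two summands in each formula form a convex combination of $\nabla f$ and $\nabla g$ and thus have norm at most $b$. For the remaining summand I would invoke the key property $|t\,H_\delta'(t)|\le\delta$ from \cref{heaviside}, applied to $t=u=f-g$, together with $|\nabla f-\nabla g|\le|\nabla f|+|\nabla g|\le 2b$, which gives $|u\,H_\delta'(u)\,(\nabla f-\nabla g)|\le 2\delta b$. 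Adding the two estimates yields $|\nabla(f\wedge_\delta g)|\le(1+2\delta)b$ and $|\nabla(f\vee_\delta g)|\le(1+2\delta)b$ everywhere in $\mathcal{C}$, i.e.\ $f\wedge_\delta g,f\vee_\delta g\in(1+2\delta)C_d^1$.

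For the uniform estimates $|f\wedge_\delta g-f\wedge g|,|f\vee_\delta g-f\vee g|\le\delta$ I would argue pointwise, fixing $z\in\mathcal{C}$ and splitting into three cases according to the value of $u(z)=f(z)-g(z)$. If $u(z)\le-\delta$ then $H_\delta(u(z))=0$, so the identities above give $(f\wedge_\delta g)(z)=f(z)=(f\wedge g)(z)$ and $(f\vee_\delta g)(z)=g(z)=(f\vee g)(z)$; if $u(z)\ge\delta$ then $H_\delta(u(z))=1$ and one finds similarly that $f\wedge_\delta g$ and $f\wedge g$ (resp.\ $f\vee_\delta g$ and $f\vee g$) coincide at $z$. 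In the remaining case $|u(z)|<\delta$, the identities and $H_\delta\in[0,1]$ show that $(f\wedge_\delta g)(z)$ and $(f\vee_\delta g)(z)$ are convex combinations of $f(z)$ and $g(z)$, hence lie in the closed interval with endpoints $f(z)$ and $g(z)$; since $(f\wedge g)(z)$ and $(f\vee g)(z)$ lie in that same interval, whose length is $|u(z)|<\delta$, the claimed bound follows.

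There is no real obstacle here: the lemma is a direct computation once the two identities are in place. The only points requiring a little care are organising the differentiation so that the convex-combination (``partition of unity'') structure of the coefficients $H_\delta(u)$, $1-H_\delta(u)$ is visible, and applying the bound $|t\,H_\delta'(t)|\le\delta$ to the product $u\,H_\delta'(u)$ as a whole — the factor $H_\delta'(u)$ by itself is not bounded uniformly in $\delta$, so it is essential that it appears multiplied by $u=f-g$.
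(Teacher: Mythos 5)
Your proposal is correct and follows essentially the same route as the paper: the same product-rule computation exhibiting $\nabla(f\wedge_\delta g)$ as a convex combination of $\nabla f,\nabla g$ plus a remainder controlled by $|t\,H_\delta'(t)|\leq\delta$, giving the $(1+2\delta)b$ bound, and the same observation that $f\wedge_\delta g$ coincides with $f\wedge g$ where $|f-g|\geq\delta$ and is a convex combination of $f,g$ otherwise. The only cosmetic difference is that you carry out the symmetric computation for $f\vee_\delta g$ explicitly, where the paper simply declares it analogous.
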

	\begin{proof}
		We have $f\wedge_\delta g\in C^1(\mathcal{C} )$ by definition. Furthermore, we can estimate
		\begin{align*}
		|\nabla (f\wedge_\delta g)|&=|H_\delta (f-g)\nabla g+(1-H_\delta (f-g))\nabla f+(g-f)H_\delta'(f-g)\nabla (f-g)|\\&\leq H_\delta (f-g)|\nabla g|+(1-H_\delta (f-g))|\nabla f|+|(g-f)H_\delta'(f-g)|(|\nabla f|+|\nabla g|)\\
		&\leq(1+2\delta) b,
		\end{align*}
		thus $f\wedge_\delta g\in (1+2\delta ) C_d^1$. Additionally, we have $f\wedge_\delta g=f\wedge g$ if $|f-g|\geq\delta$. The function $f\wedge_\delta g$ is a convex combination of $f$ and $g$ , therefore, we conclude $|f\wedge_\delta g-f\wedge g|\leq\delta$. The proof for $f\vee_\delta g$ is analogous.
	\end{proof}
	We can now use the operations $\wedge_\delta$ and $\vee_\delta$ to obtain ``$C^1$-gluings'' of functions on certain open covers of $\mathcal{C}$.
	\begin{prop}[{{Version of the Stone\textendash Weierstraß theorem, cf.\ \cite[Prop.\,A.5]{SW}}}]
		\label{stone}
		Let \cref{assumpmetric} be satisfied. Then $C_d^1$ is a dense subset of $C_d$ with respect to the uniform norm $|.|_{\infty ,\mathcal{C} }$.
	\end{prop}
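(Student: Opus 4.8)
The plan is to run the lattice version of the Stone\textendash Weierstra{\ss} theorem with a built-in tolerance, following \cite[Appendix~A]{SW}: the point-separation statement \cref{psepa} furnishes the elementary building blocks, and the smoothed operations $\wedge_\delta,\vee_\delta$ of \cref{minmaxop} play the role of pointwise minimum and maximum, at the price of the extra factor $(1+2\delta)$ in the gradient bound. Fix $\varphi\in C_d$ -- which is bounded, being continuous on the compact set $\mathcal{C}$ -- and $\varepsilon>0$; the goal is to produce $f\in C_d^1$ with $|f-\varphi|_{\infty,\mathcal{C}}\le\varepsilon$. As a first step (independent of $\delta$) I would construct, for every ordered pair $(p,q)\in\mathcal{C}\times\mathcal{C}$, a function $f_{pq}\in C_d^1$ with $f_{pq}(q)\le\varphi(q)+\varepsilon/4$ and $f_{pq}(p)\ge\varphi(p)-\varepsilon/4$: if $d(p,q)=0$ then $\varphi(p)=\varphi(q)$ and a constant works, while otherwise $|\varphi(p)-\varphi(q)|\le d(p,q)$ together with $\varepsilon/2>0$ lets one pick reals $t_1\le\varphi(q)+\varepsilon/4$ and $t_2\ge\varphi(p)-\varepsilon/4$ with $|t_2-t_1|<d(p,q)$, so that \cref{psepa} (with $x=q$, $y=p$) yields $f_{pq}$.

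Next comes the gluing. Fix $p$: the open sets $\{f_{pq}<\varphi+\varepsilon/2\}$, $q\in\mathcal{C}$, cover $\mathcal{C}$ (each contains $q$), so finitely many, indexed by $q_1^p,\dots,q_{m(p)}^p$, already cover; let $g_p$ be an iterated $\wedge_\delta$ of the corresponding $f_{pq_i^p}$. Since $\wedge_\delta$ returns a pointwise convex combination of its two arguments, one gets $\min_i f_{pq_i^p}\le g_p\le\min_i f_{pq_i^p}+(m(p)-1)\delta$ on $\mathcal{C}$, and by iterating \cref{minmaxop} (whose gradient estimate is stable under the rescaling $b\mapsto\lambda b$) also $g_p\in(1+2\delta)^{m(p)-1}C_d^1$. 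Because $g_p\ge\min_i f_{pq_i^p}$, the open sets $V_p:=\{\min_i f_{pq_i^p}>\varphi-\varepsilon/2\}$ -- which contain $p$ by the first step and do not depend on $\delta$ -- cover $\mathcal{C}$; take a finite subcover $V_{p_1},\dots,V_{p_k}$ and let $f_\delta$ be an iterated $\vee_\delta$ of $g_{p_1},\dots,g_{p_k}$. Using that the iterated $\vee_\delta$ stays between the maximum of the $g_{p_j}$ and that maximum minus $(k-1)\delta$, that $g_{p_j}\ge\min_i f_{p_jq_i^{p_j}}$, and that both finite families really cover $\mathcal{C}$, one obtains $\varphi-\varepsilon/2-(k-1)\delta<f_\delta<\varphi+\varepsilon/2+(\max_j m(p_j))\delta$ on $\mathcal{C}$, while $f_\delta\in(1+2\delta)^N C_d^1$ for an integer $N$ depending only on $\varphi$ and $\varepsilon$. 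Finally $f:=f_\delta/(1+2\delta)^N\in C_d^1$, and choosing $\delta$ small enough that the $\delta$-error terms and $((1+2\delta)^N-1)(|\varphi|_{\infty,\mathcal{C}}+\varepsilon)$ are all below $\varepsilon/4$ gives $|f-\varphi|_{\infty,\mathcal{C}}\le\varepsilon$; as $\varepsilon$ was arbitrary, $C_d^1$ is dense in $C_d$.

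The main obstacle is the order of quantifiers forced by the scale inflation in \cref{minmaxop}: the total number $N$ of smoothed min/max operations must be pinned down \emph{before} $\delta$ is chosen, or else the final rescaling by $(1+2\delta)^{-N}$ cannot be controlled -- yet the second open cover is assembled from the glued functions $g_p$, which a priori depend on $\delta$. This is resolved by the elementary observation that $\wedge_\delta$ never drops below the plain minimum, so each $g_p$ dominates the $\delta$-free function $\min_i f_{pq_i^p}$ and one may cover $\mathcal{C}$ by the $\delta$-free sets $V_p$; then $k$, all the $m(p_j)$, and hence $N$ are determined purely by $\varphi$ and $\varepsilon$. Everything else is routine given \cref{psepa,minmaxop,heaviside}: the point separation comes from \cref{psepa}, the $C^1$-regularity and the gradient bounds for the glued functions from iterating \cref{minmaxop}, and the uniform error introduced by the final rescaling is arbitrarily small precisely because $\varphi$, and hence $f_\delta$, is bounded.
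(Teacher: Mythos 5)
Your proposal is correct and follows essentially the same route as the paper's proof: point separation via \cref{psepa}, the smoothed lattice operations of \cref{minmaxop}, and the two nested compactness covers of the Stone--Weierstra\ss{} scheme from \cite[Appendix~A]{SW}. The only differences are bookkeeping choices -- you absorb the strict-inequality requirement of \cref{psepa} into the $\varepsilon/4$-tolerance rather than rescaling $\varphi$ by $\lambda<1$, and you keep a single global $\delta$ with $\delta$-independent covers (using $\wedge_\delta\ge\wedge$) and one final rescaling, whereas the paper rescales the intermediate functions $F_x$ back into $C_d^1$ and introduces a second parameter $\tilde\delta$ -- both of which are sound.
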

	\begin{proof}
	 We have already seen that $C_d^1$ is a subset of $C_d$ in the introduction of \cref{subs6}. For the denseness, let $\varepsilon>0$ and $g\in C_d$. By $\lambda\tilde{g}\rightrightarrows\tilde{g}$ as $\lambda\nearrow 1$ for all $\tilde{g}\in C_d$ we can assume $g\in\lambda C_d$ for some $\lambda\in (0,1)$. Now let $x\in\mathcal{C}$ be arbitrary. For each $y\in\mathcal{C}\setminus\{ x\}$ we have $d(x,y)=\infty$ or $|g(x)-g(y)|\leq\lambda d(x,y)<d(x,y)$ so that by \cref{psepa} (point separation) there exists a function $f_y\in C_d^1$ with $f_y(x)\geq g(x)$ and $f_y(y)\leq g(y)$. Define the (relatively) open sets $V_y=\{ f_y<g+\varepsilon/4\}$ ($f_y$ and $g$ are continuous). There exists a finite cover $V_{y_1},\ldots ,V_{y_k}$ of $\mathcal{C}$ by the compactness of $\mathcal{C}$ (Heine\textendash Borel theorem). For $\delta>0$ define $\tilde{F}_x=(\ldots ((f_{y_1}\wedge_\delta f_{y_2})\wedge_\delta f_{y_3})\ldots\wedge_\delta f_{y_k})$. Then $\tilde{F}_x\in (1+2\delta)^k C_d^1$ and $|\tilde{F}_x-\min\{ f_{y_1},\ldots ,f_{y_k}\} |\leq k\delta$ by \cref{minmaxop} and induction.
		Now assume without loss of generality that $g,f_{y_1},\ldots,f_{y_k},\tilde F_x$ are all nonnegative (otherwise we can just add the same, sufficiently large constant to all of them).
		The function $F_x=(1+2\delta )^{-k}\tilde{F}_x\in C_d^1$ satisfies
		\begin{equation*}
		F_x\leq (1+2\delta)^{-k}(\min\{ f_{y_1},\ldots ,f_{y_k}\} +k\delta )<(1+2\delta )^{-k}(g+\varepsilon/4+k\delta )<g+\varepsilon/4+k\delta<g+\varepsilon/2
		\end{equation*}
		for $\delta$ sufficiently small. Now let $W_x=\{ F_x>g-\varepsilon/4\}$. Then we obtain
		\begin{equation*}
		F_x(x)\geq (1+2\delta )^{-k}(\min\{ f_{y_1}(x)\ldots ,f_{y_k}(x)\} -k\delta )\geq (1+2\delta )^{-k}(g(x)-k\delta )%\\&\geq (1+2\delta )^{-k}(g(x)-\varepsilon/8)-(1+2\delta)^{-k}k\delta
		> g(x)-\varepsilon/4
		\end{equation*}
		for $\delta$ sufficiently small. Thus, we have $x\in W_x$. Again, there exists a finite (relatively) open cover $W_{x_1},\ldots ,W_{x_l}$ of $\mathcal{C}$. For $\tilde{\delta}>0$ and $\tilde{f}=(\ldots ((F_{x_1}\vee_{\tilde{\delta }} F_{x_2})\vee_{\tilde{\delta }} F_{x_3})\ldots\vee_{\tilde{\delta }} F_{x_l})$ we have $f=(1+2\tilde{\delta})^{-l}\tilde{f}\in C_d^1$,
		\begin{equation*}
		f\leq (1+2\tilde{\delta})^{-l}(\max\{ F_{x_1},\ldots ,F_{x_l}\}+l\tilde{\delta})\leq (1+2\tilde{\delta})^{-l}(g+\varepsilon/2+l\tilde{\delta})\leq g+\varepsilon/2+l\tilde{\delta}\leq g+\varepsilon,
		\end{equation*}
		and
		\begin{equation*}
		f\geq (1+2\tilde{\delta})^{-l}(\max\{F_{x_1},\ldots ,F_{x_l}\} -l\tilde{\delta})\geq (1+2\tilde{\delta})^{-l}(g-\varepsilon/4-l\tilde{\delta})%\\&\geq (1+2\tilde{\delta})^{-l}(g-\varepsilon/8)-(1+2\tilde{\delta})^{-l}\varepsilon/4-(1+2\tilde{\delta})^{-l}l\tilde{\delta}
		\geq g-\varepsilon
		\end{equation*}
		for $\tilde{\delta }$ sufficiently small using also that $g$ is bounded. In conclusion, we get $f\in C_d^1$ and $|f-g|_{\infty ,\mathcal{C} }\leq\varepsilon$.
	\end{proof}
	\begin{examp}[Counterexample for $b$ not lower semi-continuous]
		\label{exScl}
		\Cref{stone} is in general not true without the condition that $b$ is lower semi-continuous in $\mathcal{C}$. Assume that $(S,a,b)$ are as in \cref{exm:pointwiseDistanceApproximationNotLSC} and $a<\infty$. Then we have $d(x,y)=a|x-y|$. The function $g(z)=az$ satisfies $|g(x)-g(y)|=d(x,y)$ for all $x,y\in [0,1]$, thus $g\in C_d$. However, for $\varepsilon>0$ sufficiently small, no $f\in C_d^1$ can satisfy the inequality $|f-g|_{\infty ,\mathcal{C} }<\varepsilon$. To see this, choose some $\varepsilon<(a-b)/2$. Assume that there exists some $f\in C_d^1$ with $|f-g|_{\infty ,\mathcal{C} }<\varepsilon$, then $|f'|\leq b$ on $[0,1]$ by continuity and thus $f(1)\leq f(0)+b\leq\varepsilon+b<a-\varepsilon=g(1)-\varepsilon$, which is a contradiction. 
	\end{examp}
	\begin{rem}
	We believe that it is possible to remove the assumption that $b$ is bounded away from zero in \cref{stone}. A strategy to prove this might be as follows. Let $D(x,y)=\lim_kd_k(x,y)$ (which would be equal to $d(x,y)$ under \cref{assumpmetric}). By \cref{psepa} and \cref{stone} it would then be sufficient to prove that $C_d=\{ \varphi\in C(\mathcal{C})\,|\, |\varphi(x)-\varphi(y)|\leq D(x,y)\}$ which would follow if we had $D(x,y)=\sup_{\phi\in C_d}|\phi(x)-\phi(y)|$.  
	\end{rem}
	Our main result of this subsection now follows from the previous density result and the classical Kantorovich\textendash Rubinstein duality argument.% formula \cite[Thm.\,4.1]{Edw}.
	% in fact, if d is lower semi-continuous, then by Santambrogio Thm.1.42 we have strong duality, and by Villani (Old and New) Particular Case 5.4 we can replace each dual variable by its d-transform (which will still be continuous), thereby increasing the energy, and the Rubinstein formula follows
	%\todo[inline,color=yellow]{Idea with classical KR-formula (not successful): Using our approximations $d_k$ we clearly have $W_d(\mu_+,\mu_-)\geq\sup_kW_{d_k}(\mu_+,\mu_-)$. For the reverse inequality assume that $\sup_kW_{d_k}(\mu_+,\mu_-)$ is finite and let $\pi_k$ be some corresponding optimal transport plans. By Prokhorov's theorem we have $\pi_k\xrightharpoonup{*}\pi$ up to a subsequence. But we do not have that the $d_k$ converge uniformly to $d$. If we can show that $W_d(\mu_+,\mu_-)\leq\sup_kW_{d_k}(\mu_+,\mu_-)$, then the desired result would follow from the classical KR-formula: $$W_d(\mu_+,\mu_-)\leq\sup_kW_{d_k}(\mu_+,\mu_-)=\sup_k\sup_{\varphi\in D_k}\int\varphi\mathrm{d}(\mu_+-\mu_-)=\sup_D\int\varphi\mathrm{d}(\mu_+-\mu_-),$$where we used $D=\bigcup_k D_k$ (indeed, for $f\in D$ choose $k\geq |\nabla f|_{\infty,\mathcal{C}}$).}
	\begin{proof}[Proof of \cref{KRformula}]
		Since $d$ is bounded below and lower semi-continuous by \cref{thm:lscMetric},
		the dual formulation of the Wasserstein distance is known to be (see for instance \cite[Thm.\,1.42]{San} or \cite[Thm.\,5.10(i)]{Villani})
		\begin{equation*}
		W_d(\mu_+,\mu_-)=\sup\left\{\int_{\mathcal C}\phi\,\d\mu_+-\int_{\mathcal C}\psi\,\d\mu_-\,\middle|\,\phi,\psi\in C(\mathcal C),\,\phi(x)-\psi(y)\leq d(x,y)\,\forall x,y\in\mathcal C\right\}.
		\end{equation*}
		Moreover, it is a classical argument (compare with the condition in the above supremum) that given $\phi$ the optimal $\psi$ is equal to (functions of this form are called $d$-convex)
		\begin{equation*}
		\psi(y)=\sup_{x\in\mathcal C}\phi(x)-d(x,y),
		\end{equation*}
		so that we may restrict the $\psi$'s in the supremum to $d$-convex functions \cite[Thm.\,5.10(i)]{Villani}. Again it is not difficult to show that a function $\zeta$ is $d$-convex if and only if $|\zeta(x)-\zeta(y)|\leq d(x,y)$ for all $x,y\in\mathcal C$ \cite[Case 5.4]{Villani}. Thus, we obtain
		\begin{equation*}
		W_d(\mu_+,\mu_-)=\sup\left\{\int_{\mathcal C}\phi\,\d\mu_+-\int_{\mathcal C}\psi\,\d\mu_-\,\middle|\,\phi,\psi\in C(\mathcal C),\,\phi(x)-\psi(y)\leq d(x,y),\,|\psi(x)-\psi(y)|\leq d(x,y)\,\forall x,y\in\mathcal C\right\}.
		\end{equation*}
		The condition $\phi(x)-\psi(y)\leq d(x,y)$ now implies $\psi\geq\phi$ so that we may directly assume $\psi=\phi$ without changing the supremum. We therefore end up with $W_d(\mu_+,\mu_-)=\sup_{\psi\in C_d}\int_{\mathcal C}\psi\,\d(\mu_+-\mu_-)$.
		\notinclude{
		The generalized urban metric $d$ and the Euclidean distance induce the same topology by 
		\begin{equation*}
		\inf_Sb|x-y|\leq d(x,y)\leq a|x-y|\text{ for all }x,y\in\R^n.
		\end{equation*}			
		Additionally, $(\mathcal{C} ,d)$ is a Radon space (\cite[Def.\,5.1.4]{AGS} and \cite[p.\,343]{Els}), which allows us to apply the Kantorovich\textendash Rubinstein formula \cite[Thm.\,4.1]{Edw},
		\begin{equation*}
		W_d(\mu_+,\mu_-)=\sup_{\varphi\in L}\int_{\mathcal{C} }\varphi\,\mathrm{d}(\mu_+-\mu_-).
		\end{equation*}
		}%\notinclude
		Furthermore, we have
		\begin{equation*}
		\int_{\mathcal{C} }f\,\mathrm{d}(\mu_+-\mu_-)\leq |f|_{\infty , \mathcal{C} }|\mu_+-\mu_-|(\mathcal{C} )
		\end{equation*}
		for all $f\in C(\mathcal{C} )$ so that the claim now follows from \cref{stone}.
	\end{proof}
	\begin{examp}[Counterexample if $b$ is not lower semi-continuous in $\mathcal{C}$]
	\label{sharp1}
	Assume that $(S,a,b)$ are as in \cref{exm:pointwiseDistanceApproximationNotLSC}. Then for $\mu_+=\delta_0$ and $\mu_-=\delta_1$ we get
	\begin{equation*}
	W_d(\mu_+,\mu_-)=d(x,y)=a>b=\sup_{\phi\in C_d^1}\int_\mathcal{C}\phi\,\mathrm{d}(\mu_+-\mu_-).
	\end{equation*}
	\end{examp}
	\begin{examp}[Counterexample if $b$ is not bounded away from zero]
	\label{sharp2}
	We do not have a Kantorovich\textendash Rubinstein formula without the assumption that $b$ is bounded away from $0$. Indeed, take $\mu_+=\delta_x$ and $\mu_-=\delta_y$ in \cref{bbounded}. Then we directly get
	\begin{equation*}
	W_d(\mu_+,\mu_-)=\sqrt{2}>0=\sup_{\phi\in C_d}\int_\mathcal{C}\phi\,\mathrm{d}(\mu_+-\mu_-)
	\end{equation*}
	since every element in $\phi\in C_d$ is constant on $S_2$ and thus $\phi(x)=\phi(y)$. %Note that this is not a counterexample of $C_d^1$ being a dense subset of $C_d$.
	\end{examp}
	\subsection{Wasserstein distance as min-cost flow using Fenchel-duality}
	\label{subs62}
	In this section we prove \cref{BWfinal} by applying Fenchel's duality theorem \cite[Thm.\,4.4.18]{BV}. We will interpret the Beckmann problem as the dual problem to the Kantorovich--Rubinstein formula from \cref{KRformula}. Consequently, the primal variables lie in $C^1(\mathcal{C})$, whereas the dual variables correspond to Radon measures as in the following \namecref{beckradon}. We abbreviate $B(\mu_+,\mu_-)=B_{S,a,b}(\mu_+,\mu_-)$ (see \cref{BWfinal}).
	\begin{lem}[Version of the Beckmann problem]
		\label{beckradon}
		The Beckmann problem from \cref{BWfinal} is equivalent to a problem on Radon measures, 
		\begin{equation*}
		B(\mu_+,\mu_-)=\inf_{\F}\int_Sb\,\mathrm{d}|\F\mres S|+a|\F|(\mathcal{C}\setminus S)+\iota_{\{\mu_+-\mu_-\}}(\textup{div}\,\F),
		\end{equation*}
		where the infimum is taken over $\F\in\mathcal{DM}^n(\mathcal{C})$.
	\end{lem}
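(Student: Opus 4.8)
The plan is to verify the two inequalities between $B(\mu_+,\mu_-)$ and the right-hand side, which I temporarily abbreviate by $\tilde B(\mu_+,\mu_-)$.

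The inequality $\tilde B(\mu_+,\mu_-)\le B(\mu_+,\mu_-)$ follows from the obvious embedding. Given a competitor $(\xi,\F^\perp)$ for the Beckmann problem of \cref{BWfinal}, set $\F=\xi\,\Ha\mres S+\F^\perp$. This is an element of $\mathcal{M}^n(\mathcal{C})$ with $\textup{div}\,\F=\mu_+-\mu_-\in\mathcal{M}^1(\mathcal{C})$, so $\F\in\mathcal{DM}^n(\mathcal{C})$ and the indicator term is zero. Since $\Ha\mres S$ is concentrated on $S$ and $\F^\perp\mres S=0$, we get $\F\mres S=\xi\,\Ha\mres S$ and $\F\mres(\mathcal{C}\setminus S)=\F^\perp$, hence $|\F\mres S|=|\xi|\,\Ha\mres S$ and $|\F|(\mathcal{C}\setminus S)=|\F^\perp|(\mathcal{C})$; the objective value of $\F$ in $\tilde B$ therefore equals that of $(\xi,\F^\perp)$ in $B$, and passing to the infimum gives the claim.

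For the reverse inequality I would take an arbitrary $\F\in\mathcal{DM}^n(\mathcal{C})$; if $\textup{div}\,\F\neq\mu_+-\mu_-$ its cost is $+\infty$, so assume $\textup{div}\,\F=\mu_+-\mu_-$. The crucial point is that $\F\mres S$ is absolutely continuous with respect to $\Ha\mres S$, which I would extract from the known decomposition of a mass flux (more generally, of a divergence-measure vector field) into a rectifiable and an $\Ha$-diffuse part, cf.\ \cite[Prop.\,2.32]{BW} and \cite[Lemma\,3.1.8]{LSW}: write $\F=\zeta\,\Ha\mres\Sigma+\F_0^\perp$ with $\Sigma\subset\mathcal{C}$ countably $1$-rectifiable, $\zeta\in L^1(\Ha\mres\Sigma;\R^n)$, and $\F_0^\perp$ an $\Ha$-diffuse Radon measure. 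Since $S$ is countably $1$-rectifiable and $\Ha$-measurable, it decomposes as $S=T_0\cup\bigcup_{i\ge 1}T_i$ with $\Ha(T_0)=0$ and $\Ha(T_i)<\infty$; as $\F_0^\perp$ annihilates every Borel set of finite $\Ha$-measure, $|\F_0^\perp|(S)=0$, and therefore $\F\mres S=\zeta\,\Ha\mres(\Sigma\cap S)=\xi\,\Ha\mres S$ with $\xi:=\zeta\,1_{\Sigma\cap S}$ extended by $0$. Because $\F$ has finite total variation, $\int_S|\xi|\,\mathrm{d}\Ha=|\F\mres S|(\mathcal{C})<\infty$, so $\xi\in L^1(\Ha\mres S;\R^n)$; moreover $\F^\perp:=\F\mres(\mathcal{C}\setminus S)$ satisfies $\F^\perp\mres S=0$ and $\xi\,\Ha\mres S+\F^\perp=\F$, so $(\xi,\F^\perp)$ is admissible for the Beckmann problem and its objective equals $\int_S b\,\mathrm{d}|\F\mres S|+a|\F|(\mathcal{C}\setminus S)$. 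Taking the infimum over $\F$ yields $B(\mu_+,\mu_-)\le\tilde B(\mu_+,\mu_-)$, and together with the first step the asserted equality.

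The only substantial ingredient — and the step I expect to need care — is the absolute continuity $\F\mres S\ll\Ha\mres S$ for $\F\in\mathcal{DM}^n(\mathcal{C})$: heuristically, a component of $\F$ transverse to $S$ would contribute a dipole-type (first-order) term to $\textup{div}\,\F$ rather than a Radon measure, while a tangential but $\Ha$-singular component along a rectifiable piece of $S$ is likewise incompatible with $\textup{div}\,\F$ being a measure. Making this precise is exactly what the cited structural decomposition provides, so I would invoke it rather than reprove it.
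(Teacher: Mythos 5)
Your proposal is correct and follows essentially the same route as the paper: the easy inequality by viewing $\xi\,\Ha\mres S+\F^\perp$ as a competitor $\F$, and the reverse inequality via the decomposition of a divergence-measure vector field into a countably $1$-rectifiable part and an $\Ha$-diffuse part, which kills the diffuse part on $S$ and yields $\F\mres S=\xi\,\Ha\mres S$. The only cosmetic difference is the citation: the paper invokes this structure theorem directly from \cite[Thm.\,3.1]{Sil} rather than through \cite[Prop.\,2.32]{BW} or \cite[Lemma\,3.1.8]{LSW}.
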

	\begin{proof}
		Clearly, the infimum on the right-hand side is no larger than $B(\mu_+,\mu_-)$. For the reverse inequality we assume that there exists $\F\in\mathcal{DM}^n(\mathcal{C})$ such that $\textup{div}\,\F=\mu_+-\mu_-$. By \cite[Thm.\,3.1]{Sil} we have $\F=\vartheta\Ha\mres M+\G$ for some $M\subset\mathcal{C}$ countably $1$-rectifiable and $\Ha$-measurable, $\vartheta\in L^1(\Ha\mres M;\R^n)$, and an $\Ha$-diffuse vector measure $\G\in\mathcal{M}^n(\mathcal{C})$. Let
		\begin{equation*}
		\xi=
		\begin{cases*}
		\vartheta&on $M\cap S$,\\
		0&on $S\setminus M$
		\end{cases*}
		\end{equation*}
		and $\F^\perp=\F\mres (\mathcal{C}\setminus S)$. Then $\xi\in L^1(\Ha\mres S;\R^n)$, $\F^\perp\in\mathcal{M}^n(\mathcal{C})$ with $\F^\perp\mres S=0$, and
		\begin{equation*}
		\xi\Ha\mres S+\F^\perp=\vartheta\Ha\mres (M\cap S)+\F-\F\mres S=\F,
		\end{equation*}
		because $\F\mres S=\vartheta\Ha\mres (M\cap S)+\G\mres S=\vartheta\Ha\mres (M\cap S)$. Therefore, the measure $\xi\Ha\mres S+\F^\perp$ satisfies the divergence constraint. Finally, we estimate
		\begin{equation*}
		\int_{S}b|\xi|\,\mathrm{d}\Ha+a|\F^\perp|(\mathcal{C} )\leq\int_{S}b\,\mathrm{d}|\F\mres S|+a|\F|(\mathcal{C}\setminus S ).
		%\leq\int_{S}b\,\mathrm{d}|\Xi|+a|\F\mres S|(\mathcal{C})+a|\F\mres (\mathcal{C}\setminus S)|(\mathcal{C} )
		%\int_{S}b\,\mathrm{d}|\Xi|+a|\F|(\mathcal{C}).
		\qedhere
		\end{equation*}
	\end{proof}
	\begin{rem}[Monotonicity of Beckmann problem]\label{rem:monotonicityBeckmann}
	An immediate consequence is $B_{\tilde S,\tilde a,\tilde b}\geq B_{S,a,b}$ for any $\tilde S\subset S$, $\tilde a\geq a$, $\tilde b\geq b$.
	\end{rem}
	We can now prove strong duality for the Fenchel problems $W_d(\mu_+,\mu_-)$ (primal) and $B(\mu_+,\mu_-)$ (dual) using \cite[Thm.\,4.4.18, second constraint qualification]{BV} under \cref{assumpmetric}, from which we will afterwards deduce the result under \cref{afinite}. 
	\begin{prop}[{{Dual formulation of Beckmann problem $\widehat{=}$ \cref{BWfinal} under \cref{assumpmetric}}}]
		\label{Fenchel}
		Under \cref{assumpmetric} we have
		\begin{equation*}
		B(\mu_+,\mu_-)=\sup_{\varphi\in C_d^1}\int_{\mathcal{C} }\varphi\,\mathrm{d}(\mu_+-\mu_-)=W_d(\mu_+,\mu_-).
		\end{equation*}
	\end{prop}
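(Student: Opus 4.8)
The plan is to read the first equality in the statement, $B(\mu_+,\mu_-) = \sup_{\varphi\in C_d^1}\int_{\mathcal{C}}\varphi\,\d(\mu_+-\mu_-)$, as strong Fenchel duality between the Kantorovich--Rubinstein problem (primal) and the Beckmann problem in the form of \cref{beckradon} (dual), while the second equality $\sup_{\varphi\in C_d^1}\int_{\mathcal{C}}\varphi\,\d(\mu_+-\mu_-) = W_d(\mu_+,\mu_-)$ is precisely \cref{KRformula}, available under \cref{assumpmetric}. To set up the duality I would take the Banach spaces $X = C^1(\mathcal{C})$ equipped with the norm $|\varphi|_{\infty,\mathcal{C}} + |\nabla\varphi|_{\infty,\mathcal{C}}$ and $Y = C(\mathcal{C};\R^n)$ with the supremum norm, so that $Y^* = \mathcal{M}^n(\mathcal{C})$; the bounded linear operator $A = \nabla\colon X\to Y$; the continuous linear (hence proper convex lower semi-continuous) functional $f(\varphi) = -\int_{\mathcal{C}}\varphi\,\d(\mu_+-\mu_-)$; and $g = \iota_{B_b}$, where $B_b = \{v\in Y \colon |v(z)|\le b(z)\text{ for all }z\in\mathcal{C}\}$ and $b$ denotes the friction coefficient extended to $\mathcal{C}$ with value $a$. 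Under \cref{assumpmetric} this $b$ is lower semi-continuous and bounded below by some $m>0$, so $B_b$ is convex, closed and contains the open ball $\{|v|_{\infty,\mathcal{C}}<m\}$; in particular $g$ vanishes identically, hence is continuous, on an open neighbourhood of $0 = A\varphi$ with $\varphi\equiv 0\in\textup{dom}(f)=X$, which is the constraint qualification of \cite[Thm.\,4.4.18]{BV}. By construction $\inf_{\varphi\in X}\{f(\varphi)+g(A\varphi)\} = -\sup_{\varphi\in C_d^1}\int_{\mathcal{C}}\varphi\,\d(\mu_+-\mu_-)$.

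Next I would compute the two conjugates. Since $A^*\F$ acts on $\varphi\in C^1(\mathcal{C})$ by $\langle A^*\F,\varphi\rangle = \int_{\mathcal{C}}\nabla\varphi\cdot\d\F$, and $f^*$ is the indicator of $\{-(\mu_+-\mu_-)\}$ in $X^*$, the quantity $-f^*(A^*\F)$ equals $0$ exactly when $\int_{\mathcal{C}}\nabla\varphi\cdot\d\F = -\int_{\mathcal{C}}\varphi\,\d(\mu_+-\mu_-)$ for all $\varphi\in C^1(\mathcal{C})$, i.e.\ exactly when $\F\in\mathcal{DM}^n(\mathcal{C})$ with $\textup{div}\,\F = \mu_+-\mu_-$ (tested against all of $C^1(\mathcal{C})$, this is precisely the divergence constraint on the closed cube used in \cref{beckradon}), and $-\infty$ otherwise. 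For the other conjugate I would prove $g^*(\F) = \int_{\mathcal{C}}b\,\d|\F| = \int_S b\,\d|\F\mres S| + a|\F|(\mathcal{C}\setminus S)$, which is even, so $g^*(-\F) = g^*(\F)$. The bound $g^*(\F)\le\int_{\mathcal{C}}b\,\d|\F|$ is immediate from $|v|\le b$. For the reverse inequality I would write the lower semi-continuous $b$ as an increasing pointwise limit of continuous functions $0\le\beta_j\nearrow b$, approximate the polar density $\sigma = \d\F/\d|\F|$ in $|\F|$-measure by continuous fields $w_j$ with $|w_j|\le 1$ (Lusin's theorem together with a Tietze extension retracted onto the unit ball), and test $g^*(\F)$ against $v_j = \beta_j w_j\in B_b$; then $\int_{\mathcal{C}}v_j\cdot\d\F = \int_{\mathcal{C}}\beta_j(w_j\cdot\sigma)\,\d|\F|$, which converges by dominated convergence (fixed $j$) and then monotone convergence (in $j$) to $\int_{\mathcal{C}}b\,\d|\F|$, including the case where this value is $+\infty$.

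Combining via \cite[Thm.\,4.4.18]{BV}, the primal and dual values agree, so
\[
-W_d(\mu_+,\mu_-) \;=\; \sup_{\substack{\F\in\mathcal{DM}^n(\mathcal{C}),\\\textup{div}\,\F = \mu_+-\mu_-}}\Bigl(-\int_S b\,\d|\F\mres S| - a|\F|(\mathcal{C}\setminus S)\Bigr) \;=\; -B(\mu_+,\mu_-),
\]
where the last equality is \cref{beckradon} (if $W_d=\infty$, weak duality already forces $B=\infty$). This yields $W_d(\mu_+,\mu_-) = B(\mu_+,\mu_-)$, and together with \cref{KRformula} the full chain of identities in the statement. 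The step I expect to be the main obstacle is the identification of $g^*$: because $b$ is merely lower semi-continuous, and equals $+\infty$ outside $S$ when $a=\infty$, one must be careful that the continuous competitors $v_j$ genuinely respect the pointwise constraint $|v_j|\le b$ and that the limit recovers the full weighted total variation $\int_{\mathcal{C}}b\,\d|\F|$ rather than only a relaxed version of it. The remaining ingredients — boundedness of $A$, the explicit form of $f^*$, and matching $A^*\F = -(\mu_+-\mu_-)$ with the divergence constraint in \cref{beckradon} — are routine.
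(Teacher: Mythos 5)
Your proposal is correct and follows essentially the same route as the paper: the identical Fenchel setup on $X=C^1(\mathcal{C})$, $Y=C(\mathcal{C};\R^n)$ with $g=\iota_{\{|\cdot|\le b\}}$ (signs merely shifted between $f$ and $A$), the same constraint qualification from $\inf b>0$, the identification $g^*(\F)=\int_{\mathcal{C}}b\,\d|\F|$, and the final appeal to \cref{beckradon} and \cref{KRformula}. The only cosmetic difference is in the reverse inequality for $g^*$, where you approximate the polar density via Lusin/Tietze while the paper pairs monotone Lipschitz approximations $b_k\nearrow b$ with near-optimal continuous unit fields coming from the definition of the total variation; both arguments are sound and interchangeable.
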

	\begin{proof}
		We want to apply Fenchel's duality theorem \cite[Thm.\,4.4.18]{BV}.
		As usual we extend $b$ to $\mathcal C\setminus S$ by $a$.
		Consider the Banach spaces $X=C^1(\mathcal{C} )$ and $Y=C(\mathcal{C} ;\R^n)$ equipped with $|\varphi |_X=|\varphi|_{\infty ,\mathcal{C}}+|\nabla\varphi|_{\infty,\mathcal{C} }$ for $\varphi\in X$ and $|s|_Y=|s|_{\infty ,\mathcal{C}}$ for $s\in Y$. Define the mappings
		\begin{align*}
		f(\varphi)&=\langle\varphi,\mu_+-\mu_-\rangle&\text{ for }\varphi\in X,\\
		g(s)&=\iota_{\{|.|\leq b\text{ in }\mathcal{C}\}}(s)&\text{ for }s\in Y,\\
		A\varphi&=-\nabla\varphi&\text{ for }\varphi\in X.
		\end{align*}
		Clearly, $f$ and $g$ are convex and lower semi-continuous. Furthermore, $A$ is linear and bounded by $|A\varphi|_Y\leq|\varphi|_X$ for all $\varphi\in X$. Hence, by Fenchel's duality theorem \cite[Thm.\,4.4.18]{BV}
		\begin{equation*}
		\inf_{\varphi\in X}f(\varphi)+g(A\varphi)\geq\sup_{\F\in Y^*}-f^*(A^*\F)-g^*(-\F),
		\end{equation*}
		where $Y^*=\mathcal{M}^n(S)\times\mathcal{M}^n(\mathcal{C})$. In addition, we get
		\begin{equation*}
		f^*(\mu )=\sup_{\varphi\in X}\langle\varphi,\mu\rangle-f(\varphi)=\sup_{\varphi\in X}\langle\varphi,\mu-\mu_++\mu_-\rangle=\iota_{\{ \mu_+-\mu_-\}}(\mu)
		\end{equation*}
		for all $\mu\in X^*\supset\mathcal{M}^n(\mathcal{C})$. For $\varphi\in X$ and $\F\in Y^*$ we obtain
		\begin{equation*}
		\langle\varphi,A^*\F \rangle=\langle A\varphi,\F\rangle=-\langle\nabla\varphi,\F\rangle=\langle\varphi,\textup{div}\,\F\rangle.
		\end{equation*}
		We now calculate $g^*$.
		%For any $\F\in\mathcal{M}^n(\mathcal{C} )$ we observe
		%\begin{equation*}
		%k^*(\F)=\sup_{|t|_{\infty,\mathcal{C}}\leq a}\langle t,\F\rangle=\sup_{|t|_{\infty,\mathcal{C}}\leq 1}\langle at,\F\rangle=a|\F|(\mathcal{C} ).
		%\end{equation*}
		Let $\F\in\mathcal{M}^n(\mathcal C)$. Invoking \cite[p.\,130]{San} there is some Borel measurable function $\xi:\mathcal C\to\R^n$ with $\F=\xi |\F|$ and $|\xi|=1$ $|\F|$-almost everywhere on $\mathcal C$. Thus,
		\begin{equation*}
		g^*(\F )=\sup_{|s|\leq b}\langle s,\F\rangle\leq\sup_{|s|\leq b}\int_\mathcal{C}|s|\mathrm{d}|\F|\leq \int_\mathcal{C} b\mathrm{d}|\F|.
		\end{equation*}
		For the reverse inequality let $b_k:\mathcal C\to [0,\infty)$ be a sequence of Lipschitz functions with $b_k\nearrow b$ pointwise monotonically in $\mathcal C$ (such a sequence exists by \cite[Box 1.5]{San} due to the lower semi-continuity of $b$). Also note that by definition of the total variation there exists a sequence $(\tilde{s}_i)\subset C(\mathcal C;\R^n)$ with $|\tilde{s}_i|_{\infty,\mathcal C}\leq 1$ and $\langle\tilde{s}_i,\F\rangle\to|\F|(\mathcal C)$ for $i\to\infty$. For fixed $k$ we now define another sequence $(s_i)\subset C(\mathcal C;\R^n)$ by $s_i=b_k\tilde{s}_i$ and estimate
		\begin{align*}
		|\langle s_i,\F\rangle-\langle b_k,|\F|\rangle|\leq \int_\mathcal{C}|s_i\cdot\xi-b_k|\mathrm{d}|\F|=\int_\mathcal{C}|b_k\tilde{s}_i\cdot\xi-b_k|\mathrm{d}|\F|\leq|b_k|_{\infty,S}\int_\mathcal{C} |1-\tilde{s}_i\cdot\xi|\mathrm{d}|\F|=|b_k|_{\infty,S}(|\F|(S)-\langle\tilde{s}_i,\F\rangle)
		\end{align*}
		using that $|\F|$-almost everywhere $1-\tilde{s}_i\cdot\xi\in [0,2]$ and thus $|1-\tilde{s}_i\cdot\xi|=1-\tilde{s}_i\cdot\xi$. Thus we have $\langle s_i,\F\rangle\to\langle b_k,|\F|\rangle$ for $i\to\infty$. Furthermore, $\langle b_k,|\F|\rangle\nearrow\int_\mathcal{C} b\mathrm{d}|\F|$ by the monotone convergence theorem so that we end up with
		\begin{equation*}
		g^*(\F)=\int_\mathcal{C} b\mathrm{d}|\F|.
		\end{equation*}
		The function $g$ is continous in $0$ by the assumption that $b\geq \inf b>0$. Additionally, we have $0\in\text{dom}(f)$ and thus $0\in A\,\text{dom}(f)$. By \cite[Thm.\,4.4.18, second constraint qualification]{BV} strong duality holds, i.\,e.,
		\begin{multline*}
		\inf_{\F\in Y^*}\int_\mathcal{C} b\mathrm{d}|\Xi|+a|\F|(\mathcal{C} )+\iota_{\{ \mu_+-\mu_-\}}(\textup{div}\,\F)
		=-\sup_{\F\in Y^*}-f^*(A^*\F)-g^*(-\F)
		=-\inf_{\varphi\in X}f(\varphi )+g(A\varphi)\\
		=-\inf_{\varphi\in X}\langle\varphi,\mu_+-\mu_-\rangle+\iota_{\{|.|\leq b\text{ on }\mathcal{C}\}}(-\nabla\varphi)
		=\sup_{\varphi\in C_d^1}\langle\varphi,\mu_+-\mu_-\rangle.
		\qedhere
		\end{multline*}
	\end{proof}
	The subsequent statements will be used in the proof of \cref{BWfinal} under \cref{afinite} at the end of this section. The first lemma is standard and uses the lower semi-continuity of $d$ (which under \cref{assumpmetric} is \cref{thm:lscMetric} and under \cref{afinite} follows from the continuity of $d$ \cite[Prop.\,2.2.3]{LSW}).
	\begin{lem}[Existence of minimizer for $W_d(\mu_+,\mu_-)$ (e.\,g.\ \protect{\cite[Thm.\,1.5]{San}})]
		\label{optplan}
		Under \cref{assumpmetric} or \cref{afinite} there exists an optimal transport plan $\pi\in\Pi(\mu_+,\mu_-)$ such that
		\begin{equation*}
		W_d(\mu_+,\mu_-)=\int_{\mathcal{C}\times\mathcal{C}}d(x,y)\,\mathrm{d}\pi(x,y).
		\end{equation*}
	\end{lem}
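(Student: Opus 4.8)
The plan is the direct method of the calculus of variations: weak-$*$ compactness of the set of transport plans together with weak-$*$ lower semicontinuity of the cost functional $\pi\mapsto\int_{\mathcal C\times\mathcal C}d\,\d\pi$. If $W_d(\mu_+,\mu_-)=\infty$ there is nothing to prove, so I would assume it is finite. First I would pick a minimizing sequence $(\pi_k)\subset\Pi(\mu_+,\mu_-)$, i.e.\ with $\int_{\mathcal C\times\mathcal C}d\,\d\pi_k\to W_d(\mu_+,\mu_-)$. Since $\mathcal C\times\mathcal C$ is compact, the set of probability measures on it is weak-$*$ compact (Banach\textendash Alaoglu together with the weak-$*$ closedness of positivity and unit mass), so after passing to a subsequence I obtain $\pi_k\ws\pi$ for some probability measure $\pi$ on $\mathcal C\times\mathcal C$.

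Next I would check that the limit still lies in $\Pi(\mu_+,\mu_-)$: for every $f\in C(\mathcal C)$ the function $(x,y)\mapsto f(x)$ is continuous and bounded on $\mathcal C\times\mathcal C$, hence $\int f(x)\,\d\pi(x,y)=\lim_k\int f(x)\,\d\pi_k(x,y)=\int f\,\d\mu_+$; arguing analogously with $(x,y)\mapsto f(y)$ shows that the two marginals of $\pi$ are $\mu_+$ and $\mu_-$, so $\pi\in\Pi(\mu_+,\mu_-)$.

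The key point is the weak-$*$ lower semicontinuity of $\pi\mapsto\int d\,\d\pi$, and this is exactly where the hypothesis enters. Under \cref{assumpmetric}, $d$ is lower semicontinuous by \cref{thm:lscMetric}; under \cref{afinite}, $d$ is even continuous by \cite[Prop.\,2.2.3]{LSW}. In either case $d\colon\mathcal C\times\mathcal C\to[0,\infty]$ is nonnegative and lower semicontinuous, hence the pointwise increasing limit of a sequence of real-valued Lipschitz functions $h_m\nearrow d$ (for instance the inf-convolutions $h_m(x,y)=\inf_{(x',y')}\{d(x',y')+m\,|(x,y)-(x',y')|\}$, which are $m$-Lipschitz, finite since $d$ vanishes on the diagonal, and increase pointwise to $d$ by lower semicontinuity; under \cref{assumpmetric} one may alternatively take $h_m=d_m$, the approximating metrics from \cref{papprox}, which are Lipschitz and, since $b_m$ is monotone in $m$, increase to $d$). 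For each fixed $m$, weak-$*$ convergence gives $\int h_m\,\d\pi=\lim_k\int h_m\,\d\pi_k\le\liminf_k\int d\,\d\pi_k$, and letting $m\to\infty$ via the monotone convergence theorem yields $\int d\,\d\pi\le\liminf_k\int d\,\d\pi_k=W_d(\mu_+,\mu_-)$.

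Finally, since $\pi\in\Pi(\mu_+,\mu_-)$, the reverse inequality $\int d\,\d\pi\ge W_d(\mu_+,\mu_-)$ holds by definition of $W_d$, so $\pi$ is an optimal transport plan. I do not expect any serious obstacle here; the only step requiring a little care is the lower semicontinuity of the cost, which is precisely why \cref{assumpmetric} or \cref{afinite} is imposed, since without lower semicontinuity of $d$ the infimum need not be attained.
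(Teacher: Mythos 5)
Your argument is correct and is exactly the standard direct-method proof behind the reference the paper cites: the paper does not prove this lemma itself but invokes \cite[Thm.\,1.5]{San}, noting only that the needed lower semicontinuity of $d$ comes from \cref{thm:lscMetric} under \cref{assumpmetric} and from continuity of $d$ under \cref{afinite}, which is precisely how you use the hypotheses. So your proposal matches the paper's (cited) approach; no gaps.
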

	We show $W_d(\mu_+,\mu_-)=B(\mu_+,\mu_-)$ using standard approximation techniques to change \cref{assumpmetric} to \cref{afinite}. Henceforth, let $S^N$ be an approximating sequence for $S$ (see definition in \cref{notdefs}). We write $d^N=d_{S^N,a,b}$. Additionally, we use $b_\lambda=\max\{\lambda,b \}$ and $d_\lambda=d_{S,a,b_\lambda}$ for $\lambda\in (0,a)$. 
	\begin{lem}[Pointwise convergence of the $d^N$ and $d_\lambda$]
		\label{Siconv}
		We have $d_{\lambda}\searrow d$ pointwise as $\lambda\to0$.
		If \cref{afinite} is satisfied, then additionally $d^N\searrow d$ pointwise as $N\to\infty$.% Moreover, we have $d_{\lambda}\searrow d$ pointwise even without \cref{afinite}.
		%Under \cref{afinite} we have $W_{d^N}(\mu_+,\mu_-)\to \langle d,\pi\rangle$ for some $\pi\in\Pi(\mu_+,\mu_-)$.
	\end{lem}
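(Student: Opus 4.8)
The plan is to handle the two convergences separately; in each case the monotonicity part is immediate and the substance of the argument is the matching upper bound, obtained by feeding an almost-optimal path for $d$ into the approximating metrics. Following the convention of this section, extend $b$ to $\mathcal C\setminus S$ by $a$, so that $d_{S,a,b}(x,y)=\inf_{\gamma\in\Gamma^{xy}}\int_\gamma b\,\d\Ha$ with $b$ understood as this extended friction coefficient. Since the infimum is monotone in the integrand, $d_{S,a,b}$ is non-decreasing whenever the extended friction coefficient is raised pointwise; in particular it is non-decreasing under pointwise enlargement of $b$ on $S$ (fixed $S,a$) and under shrinking of $S$ (fixed $a,b$: passing to $S'\subseteq S$ replaces the value $b$ by $a\ge b$ on $S\setminus S'$, hence raises the extended coefficient). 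This monotonicity is the only structural input required.

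For the first claim: $b_\lambda=\max\{\lambda,b\}$ is pointwise non-increasing as $\lambda\searrow0$ and satisfies $b_\lambda\ge b$, so the monotonicity above shows that $d_\lambda$ is non-increasing in $\lambda$ with $d_\lambda\ge d$. It thus suffices to show $\limsup_{\lambda\to0}d_\lambda(x,y)\le d(x,y)$ for every $x,y\in\mathcal C$, and there is nothing to prove when $d(x,y)=\infty$. For $d(x,y)<\infty$ and $\varepsilon>0$ pick $\gamma\in\Gamma^{xy}$ with $\int_{\gamma\cap S}b\,\d\Ha+a\Ha(\gamma\setminus S)<d(x,y)+\varepsilon$; using $b_\lambda\le b+\lambda$ on $S$ together with the crude bound $\Ha(\gamma\cap S)\le\Ha(\gamma(I))\le\text{len}(\gamma)<\infty$ gives $d_\lambda(x,y)\le\int_{\gamma\cap S}b_\lambda\,\d\Ha+a\Ha(\gamma\setminus S)\le\lambda\,\text{len}(\gamma)+d(x,y)+\varepsilon$, and letting first $\lambda\to0$ and then $\varepsilon\to0$ proves the claim. (No finiteness of $a$ is used here: if $a=\infty$, any almost-optimal $\gamma$ automatically satisfies $\Ha(\gamma\setminus S)=0$ and the estimate is unchanged.)

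For the second claim, assume \cref{afinite}, so $a<\infty$. Since $S^1\subseteq S^2\subseteq\cdots\subseteq S$ and $S\setminus\bigcup_NS^N\subseteq T_0$ has $\Ha$-measure zero, monotonicity yields that $d^N$ is non-increasing in $N$ with $d^N\ge d$, and it again suffices to prove $\limsup_Nd^N(x,y)\le d(x,y)$ when $d(x,y)<\infty$. Fix $\varepsilon>0$ and choose $\gamma\in\Gamma^{xy}$ as before; then $d^N(x,y)\le\int_{\gamma\cap S^N}b\,\d\Ha+a\Ha(\gamma\setminus S^N)$. Because $\Ha\mres\gamma(I)$ is a finite Borel measure, monotone convergence gives $\int_{\gamma\cap S^N}b\,\d\Ha\nearrow\int_{\gamma\cap S}b\,\d\Ha$ (the limit set differs from $\gamma\cap S$ only by an $\Ha$-null subset of $T_0$, and $b$ is finite-valued), while continuity from above gives $\Ha(\gamma\setminus S^N)\searrow\Ha(\gamma\setminus S)$, using $\Ha(\gamma\setminus S^1)\le\text{len}(\gamma)<\infty$; since $a<\infty$, the term $a\Ha(\gamma\setminus S^N)$ then converges to the finite value $a\Ha(\gamma\setminus S)$. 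Passing to the limit yields $\limsup_Nd^N(x,y)\le\int_{\gamma\cap S}b\,\d\Ha+a\Ha(\gamma\setminus S)<d(x,y)+\varepsilon$, and $\varepsilon\to0$ concludes.

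The one genuinely delicate point is the role of \cref{afinite} in the $d^N$ statement: when $a=\infty$, restricting an almost-optimal path to $S^N$ can raise its cost to $+\infty$, since such a path need not be concentrated on any single $S^N$, which is precisely why this convergence is claimed only under \cref{afinite}. Everything else is a routine combination of the monotonicity of $d_{S,a,b}$, monotone convergence, and continuity from above of the finite measure $\Ha\mres\gamma(I)$.
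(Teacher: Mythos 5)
Your proof is correct and follows essentially the same route as the paper: monotonicity gives $d_\lambda,d^N\geq d$, and the upper bound is obtained by inserting an almost-optimal path for $d$ into $d_\lambda$ (using $b_\lambda\leq b+\lambda$ and $\Ha(\gamma\cap S)\leq\mathrm{len}(\gamma)$) and into $d^N$ (using $a<\infty$ and that the error is controlled by $\Ha$ of $\gamma\cap(S\setminus S^N)$, resp.\ $\gamma\setminus S^N$, which vanishes in the limit since $\Ha\mres\gamma$ is finite and $\Ha(T_0)=0$). The only cosmetic difference is that you split the $d^N$ limit into monotone convergence plus continuity from above, where the paper uses the single estimate $a\,\Ha(\eta\cap(S\setminus S^N))\to0$.
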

	\begin{proof}
	Let $x,y\in\mathcal{C}$ with $d(x,y) <\infty$ (else there is nothing to show since $d_\lambda,d^N\geq d$). Let $\varepsilon>0$ and $\eta\in\Gamma^{xy}$ with $L(\eta)\leq d(x,y)+\varepsilon/2$. This yields
	\begin{equation*}
	|d_\lambda(x,y)-d(x,y)|\leq\inf_\gamma\int_{\gamma\cap S}b_\lambda\mathrm{d}\mathcal{H}^1+a\mathcal{H}^1(\gamma\backslash S)-\int_{\eta\cap S}b\mathrm{d}\mathcal{H}^1-a\mathcal{H}^1(\eta\backslash S)+\frac{\varepsilon}{2}\leq\int_{\eta\cap S\cap\{ \lambda>b \}}(\lambda-b)\mathrm{d}\mathcal{H}^1+\frac{\varepsilon}{2}<\varepsilon
	\end{equation*}
	for $\lambda$ small enough by choosing $\gamma=\eta$ in the second inequality. Note that all terms in the estimate are finite. Moreover, the last inequality followed from $\mathcal{H}^1(\eta\cap S\cap\{ \lambda>b \})\to 0$ for $\lambda\to 0$. This proves that $d_\lambda\searrow d$ pointwise. Assume now that \cref{afinite} is satisfied. Then we obtain
	\begin{equation*}
	|d^N(x,y)-d(x,y)|\leq \inf_\gamma\int_{\gamma\cap S^N}b\mathrm{d}\mathcal{H}^1+a\mathcal{H}^1(\gamma\backslash S^N)-\int_{\eta\cap S}b\mathrm{d}\mathcal{H}^1-a\mathcal{H}^1(\eta\backslash S)+\frac{\varepsilon}{2}\leq a\mathcal{H}^1(\eta\cap (S\backslash S^N))+\frac{\varepsilon}{2}<\varepsilon
	\end{equation*}
	for $N$ sufficiently large (again by choosing $\gamma=\eta$). The last inequality is true, because $\eta\cap S^N$ is an approximating sequence for $\eta\cap S$.
	\end{proof}
	\begin{examp}[$d^N\searrow d$ in general not true without \cref{afinite}]
	\label{pointwnottrue}
	Let $a=\infty,b\in[0,\infty)$, and $S=[0,1]$. Assume that $(I_j)$ is a sequence of non-empty pairwise disjoint intervals with $\bigcup_jI_j=S$. Set $S^N=\bigcup_{j=1}^NI_j$ (see \cref{notpointw}). Then we have $d^N(0,1)=\infty>b=d(x,y)$ for all $N$.
	\end{examp}
	\begin{figure}
		\centering
			\begin{tikzpicture}[scale=4]
%			\foreach \x in {0,1,...,100}
%				\node[circle,fill=black,inner sep=0.3pt,minimum size=0.01cm] at ({\x/100},0) {};
%			\foreach \y in {0,1,...,100}
%				\node[circle,fill=black,inner sep=0.3pt,minimum size=0.01cm] at ({\y/100},0.3) {};
%			\draw [color=black, thick, domain=-0.1:1] plot (\x,{1}) node[right] {$a$};
%			\draw[-stealth] (-0.1,0) -- (1.1,0);
%   			\draw[-stealth] (0,-0.1) -- (0,1.1) node[left] {$b$};
%   			\draw (1,-0.01) -- (1,0.01) node[below] {$1$};
			\draw (-0.1,0) -- (1.1,0);
			\draw (1,-0.01) -- (1,0.01) node[below] {$1$};
			\draw (0,-0.01) -- (0,0.01) node[below] {$0$};
			\node[label={[label distance=-0.1cm,text=gray]-90:  $S_7$}] at (0.5,0) {};
			\draw[line width=0.5mm,gray] (0,0) -- (0.1,0);
			\draw[line width=0.5mm,gray] (0.15,0) -- (0.21,0);
			\draw[line width=0.5mm,gray] (0.37,0) -- (0.39,0);
			\draw[line width=0.5mm,gray] (0.42,0) -- (0.48,0);
			\draw[line width=0.5mm,gray] (0.57,0) -- (0.66,0);
			\draw[line width=0.5mm,gray] (0.81,0) -- (0.84,0);
			\draw[line width=0.5mm,gray] (0.9,0) -- (0.98,0);
			\end{tikzpicture}
			\caption{Sketch for \cref{pointwnottrue}.}
			\label{notpointw}
		\end{figure}
	\notinclude{
	\todo[inline]{The next lemma is not used and should be removed.}
	\todo[inline]{We should make up a name for the next lemma}
	\todo[inline]{The next lemma has large overlap with \cref{beckradon} -- they should be combined, or \cref{beckradon} should be made a corollary of it.
	Note that \cref{beckradon} was changed to a simpler form, so this lemma also needs to be changed accordingly.}
	\begin{lem}
		\label{subS}
		Let $T\subset S$ be any Borel measurable set. Furthermore, let $\xi\in L^1(\Ha\mres T;\R^n)$ and $\F^\perp\in\mathcal{M}^n(\mathcal{C})$ with $\F^\perp\mres T=0$. Then we can define $\hat{\xi}\in L^1(\Ha\mres S;\R^n)$ and $\hat{\F}^\perp\in\mathcal{M}^n(\mathcal{C})$ with $\hat{\F}^\perp\mres S=0$ such that $\hat{\xi}\Ha\mres S+\hat{\F}^\perp=\xi\Ha\mres T+\F^\perp$ and 
		\begin{equation*}
		\int_{S}b|\hat{\xi}|\,\mathrm{d}\Ha+a|\hat{\F}^\perp|(\mathcal{C} )\leq \int_{T}b|\xi|\,\mathrm{d}\Ha+a|\F^\perp|(\mathcal{C}).
		\end{equation*}
	\end{lem}
	\begin{proof}
		By \cite[Thm.\,3.1]{Sil} we have $\xi\Ha\mres T+\F^\perp=\vartheta\Ha\mres M+\G$ with $M\subset\mathcal{C}$ countably $1$-rectifiable and $\Ha$-measurable, $\vartheta\in L^1(\Ha\mres M;\R^n)$, and $\G\in\mathcal{M}^n(\mathcal{C})$ $\Ha$-diffuse.
		\todo[inline]{I guess, the statement forgot the condition that the initial measure has measure divergence?}
		Define $\hat{\F}^\perp=\F^\perp\mres (\mathcal{C}\setminus S)$ and 
		\begin{equation*}
		\hat{\xi}=\begin{cases}
		\xi&\text{on }T,\\
		\vartheta&\text{on }M\cap (S\setminus T),\\
		0&\text{else}.
		\end{cases}
		\end{equation*}
		Then we have $\hat{\F}^\perp\in\mathcal{M}^n(\mathcal{C})$ with $\hat{\F}^\perp\mres S=0$ and $\hat{\xi}\in L^1(\Ha\mres S;\R^n)$ by definition. Additionally, we get
		\begin{equation*}
		\hat{\xi}\Ha\mres S+\hat{\F}^\perp=\xi\Ha\mres T+\vartheta\Ha\mres (M\cap (S\setminus T))+\hat{\F}^\perp=\xi\Ha\mres T+\F^\perp\mres (S\setminus T)+\F^\perp\mres (\mathcal{C}\setminus S)=\xi\Ha\mres T+\F^\perp.
		\end{equation*}
		Finally, we can estimate
		\begin{multline*}
		\int_{S}b|\hat{\xi}|\,\mathrm{d}\Ha+a|\hat{\F}^\perp|(\mathcal{C} )
		=\int_{T}b|\xi|\,\mathrm{d}\Ha+\int_{M\cap (S\setminus T)}b|\vartheta|\,\mathrm{d}\Ha+a|\hat{\F}^\perp|(\mathcal{C} )\\
		\leq \int_{T}b|\xi|\,\mathrm{d}\Ha+a|\vartheta\Ha\mres (M\cap (S\setminus T))|(\mathcal{C} )+a|\hat{\F}^\perp|(\mathcal{C} )
		=\int_{T}b|\xi|\,\mathrm{d}\Ha+a|\F^\perp|(\mathcal{C}),
		\end{multline*}
		which is the desired inequality.
	\end{proof}
	}%\notinclude
	\begin{proof}[Proof of \cref{BWfinal} under \cref{afinite}]
		\underline{$W_d(\mu_+,\mu_-)\leq B(\mu_+,\mu_-)$:} Initially, we show the inequality for the case $\inf b>0$. Let $\delta>0$ and fix $\xi$ and $\F^\perp$ as in the Beckmann problem. We define measures by $\G_N=\xi\Ha\mres (S\setminus S^N)+\F^\perp$. The function $|\xi|$ is integrable with respect to $\Ha\mres S$. Consequently, we have
		\begin{equation*}
		|\G_N-\F^\perp|(\mathcal{C} )=\int_{S\setminus S^N}|\xi|\,\mathrm{d}\Ha\to 0
		\end{equation*}
		for $N\to\infty$. Thus, we can choose $N$ sufficiently large such that $|\G_N-\F^\perp|(\mathcal{C})<\delta/a$.
		%By \cref{Siconv} we have $W_{d^N}(\mu_+,\mu_-)\to\langle d,\pi\rangle$ for some $\pi\in\Pi(\mu_+,\mu_-)$. We choose $N$ sufficiently large such that $|W_{d^N}(\mu_+,\mu_-)-\langle d,\pi\rangle|<\delta/2$.
		Using in this order $\textup{div}(\xi\Ha\mres  S^N+\G_N)=\textup{div}(\xi\Ha\mres S+\F^\perp)=\mu_+-\mu_-$, \cref{Fenchel}, \cref{KRformula}, and $d^N\geq d$, we can estimate
		\begin{multline*}
		\int_Sb|\xi|\,\mathrm{d}\Ha+a|\F^\perp|(\mathcal{C} )
		\geq \int_{S^N}b|\xi|\,\mathrm{d}\Ha+a|\G_N|(\mathcal{C} )-\delta
		%\geq \inf_{\F\in\mathcal{M}^n(\mathcal{C})} \int_{S^N}b\,\mathrm{d}|\F\mres S^N|+a|\F|(\mathcal{C}\setminus S^N )+\iota_{\{ \mu_+-\mu_- \}}(\textup{div}\,\F)-\delta\\
		\geq B_{S^N,a,b|_{S^N}}(\mu_+,\mu_-)-\delta\\
		=\sup_{\varphi\in C_{d^N}^1}\int_{\mathcal{C} }\varphi\,\mathrm{d}(\mu_+-\mu_-)-\delta
		=W_{d^N}(\mu_+,\mu_-)-\delta
		%=\langle d^N,\pi_N\rangle-\delta/2
		%\geq \langle d,\pi\rangle-\delta
		\geq W_d(\mu_+,\mu_-)-\delta,
		\end{multline*}
		where $C_{d^N}^1=\{ \varphi\in C^1(\mathcal{C})\,|\,|\nabla\varphi|\leq b\text{ on }S^N,|\nabla\varphi|\leq a\text{ in }\mathcal{C} \}$. Letting $\delta\to 0$ yields the desired inequality. Now we concentrate on the case $\inf b=0$. Using the functions $b_\lambda$ we obtain
		\begin{equation*}
		\int_Sb_\lambda|\xi|\,\mathrm{d}\Ha+a|\F^\perp|(\mathcal{C} )\geq W_d(\mu_+,\mu_-).
		\end{equation*} 
		Moreover, we have 
		\begin{equation*}
		\int_Sb_\lambda|\xi|\,\mathrm{d}\Ha\leq a\int_S|\xi|\,\mathrm{d}\Ha<\infty
		\end{equation*}
		and therefore (monotone convergence)
		\begin{equation*}
		\int_Sb|\xi|\,\mathrm{d}\Ha+a|\F^\perp|(\mathcal{C} )\geq W_d(\mu_+,\mu_-)
		\end{equation*}
		by letting $\lambda\to 0$.
		\\\underline{$W_d(\mu_+,\mu_-)\geq B(\mu_+,\mu_-)$:} First assume that $\inf b>0$. By \cref{optplan} there exists an optimal transport plan $\pi\in\Pi(\mu_+,\mu_-)$ such that $W_d(\mu_+,\mu_-)=\langle d,\pi\rangle$. Let $\delta>0$ be arbitrary. Using $\langle d^N,\pi\rangle<\infty$ and $d^N\searrow d$ pointwise by \cref{Siconv} the monotone convergence theorem implies the existence of some $N=N(\delta)$ such that $|\langle d^N-d,\pi\rangle|\leq\delta$. Application of \cref{Fenchel} and \cref{rem:monotonicityBeckmann} yields
		\begin{equation*}
		W_d(\mu_+,\mu_-)=\langle d,\pi \rangle
		\geq \langle d^N,\pi\rangle-\delta
		\geq W_{d^N}(\mu_+,\mu_-)-\delta
		=B_{S^N,a,b|_{S^N}}(\mu_+,\mu_-)-\delta
		%=\inf_{\xi,\F^\perp}\int_{S^N}b|\xi|\,\mathrm{d}\Ha+a|\F^\perp|(\mathcal{C} )+\iota_{\{ \mu_+-\mu_-\}} (\xi\Ha\mres S^N+\F)-\delta\\
		%&\geq\inf_{\hat{\xi},\hat{\F}^\perp }\int_{S}b|\hat{\xi}|\,\mathrm{d}\Ha+a|\hat{\F}^\perp|(\mathcal{C} )+\iota_{\{ \mu_+-\mu_-\}} (\hat{\xi}\Ha\mres S+\hat{\F}^\perp )-\delta,
		\geq B_{S,a,b}(\mu_+,\mu_-)-\delta.
		\end{equation*}
		%where the infima are taken over functions $\xi,\F^\perp,\hat{\xi},\hat{\F}^\perp$ as in \cref{subS} with $T=S^N$.
		Letting $\delta\to 0$ yields the desired inequality. Now assume that $\inf b=0$.
		Once more let $\delta>0$. Again, by $\langle d_{\lambda},\pi\rangle<\infty$  and $d_{\lambda}\searrow d$ pointwise by \cref{Siconv} we can apply the monotone convergence theorem and choose $\lambda$ sufficiently small such that
		\begin{equation*}
		W_d(\mu_+,\mu_-)=\langle d,\pi\rangle
		\geq \langle d_{\lambda},\pi\rangle-\delta
		\geq W_{d_{\lambda}}(\mu_+,\mu_-)-\delta
		\geq B_{S,a,b_\lambda}(\mu_+,\mu_-)-\delta
		\geq B_{S,a,b}(\mu_+,\mu_-)-\delta.
		\end{equation*}
		Letting $\delta\to 0$ we obtain the desired inequality.
	\end{proof}
	\section{Urban planning formulation of branched transport using duality}
	\label{uppeqbtp}
	In this section we prove \cref{finalthm} under \cref{growthcond}. We will use the following formula for the branched transport problem \cite[Lemma\,3.1.8]{LSW}, which can be derived from \cite[Lem.\,5.15]{BCM}, \cite[Prop.\,2.32]{BW}, and \cite[Thm.\,3.1]{Sil}.
	\begin{lem}[Version of the branched transport problem]
		\label{version}
		The branched transport problem can be written as
		\begin{equation*}
		\inf_{\F\in\mathcal{DM}^n(\mathcal{C}) }\mathcal{J}^{\tau,\mu_+,\mu_-}[\F]=\inf_{S,\xi,\F^\perp}\int_S\tau (|\xi|)\,\mathrm{d}\Ha+\tau'(0)|\F^\perp|(\mathcal{C})+\iota_{\{\mu_+-\mu_-\}}(\textup{div}(\xi\Ha\mres S+\F^\perp))
		\end{equation*}
		with $S\subset \mathcal{C}$ countably $1$-rectifiable and Borel measurable, $\xi\in L^1(\Ha\mres S;\R^n)$ and $\F^\perp\in\mathcal{M}^n(\mathcal{C})$ with $\F^\perp\mres S=0$.
	\end{lem}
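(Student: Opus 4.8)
The plan is to obtain \cref{finalthm} as an assembly of the three duality results established above, with \cref{growthcond} entering only through the observation that it means exactly $a:=\tau'(0)<\infty$. Consequently \cref{afinite} holds automatically, so \cref{BWfinal} will be applicable to every city $(S,a,b)$ that appears, \emph{without} any positivity assumption on $b$. I would also record at the outset that $\varepsilon$ is a genuine maintenance cost (it is non-increasing in $b$, being a supremum of affine functions, and non-negative since its value at $m=0$ is $0$) and that $\inf\varepsilon^{-1}(0)=\tau'(0)=a$: as $\tau$ is concave with $\tau(0)=0$, the ratio $\tau(m)/m$ is non-increasing with supremum $\tau'(0)$, so $\varepsilon(b)=\sup_{m\geq0}(\tau(m)-bm)=0$ precisely when $b\geq\tau'(0)$. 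This is what makes the friction coefficient associated with the maintenance cost $\varepsilon$ equal to $a$, matching the constraint $b\leq a$ in the urban planning problem.

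Next I would start from the reformulation of branched transport in \cref{version},
\begin{equation*}
\inf_{\F}\mathcal{J}^{\tau,\mu_+,\mu_-}[\F]=\inf_{S,\xi,\F^\perp}\int_S\tau (|\xi|)\,\mathrm{d}\Ha+a|\F^\perp|(\mathcal{C})+\iota_{\{\mu_+-\mu_-\}}(\textup{div}(\xi\Ha\mres S+\F^\perp)),
\end{equation*}
and apply \cref{subst} to the network term for each fixed admissible $S$ and $\xi\in L^1(\Ha\mres S;\R^n)$, replacing $\int_S\tau(|\xi|)\,\mathrm{d}\Ha$ by $\inf_b\left(\int_Sb|\xi|\,\mathrm{d}\Ha+\int_S\varepsilon(b)\,\mathrm{d}\Ha\right)$, the infimum running over lower semi-continuous $b:S\to[0,a]$. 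Since the remaining summands $a|\F^\perp|(\mathcal{C})$, the divergence constraint, and $\int_S\varepsilon(b)\,\mathrm{d}\Ha$ do not couple the variable being minimized at each stage, the nested infima may simply be merged into a single infimum over $(S,b,\xi,\F^\perp)$; the admissibility conditions ($S\subset\mathcal{C}$ countably $1$-rectifiable and Borel, $b$ lower semi-continuous with $b\leq a$, $\xi\in L^1(\Ha\mres S;\R^n)$, $\F^\perp\in\mathcal{M}^n(\mathcal{C})$ with $\F^\perp\mres S=0$) are exactly those of a street network together with the $\xi,\F^\perp$ allowed in \cref{BWfinal}. This yields
\begin{equation*}
\inf_{\F}\mathcal{J}^{\tau,\mu_+,\mu_-}[\F]=\inf_{S,b}\left[\inf_{\xi,\F^\perp}\left(\int_Sb|\xi|\,\mathrm{d}\Ha+a|\F^\perp|(\mathcal{C})+\iota_{\{\mu_+-\mu_-\}}(\textup{div}(\xi\Ha\mres S+\F^\perp))\right)+\int_S\varepsilon(b)\,\mathrm{d}\Ha\right].
\end{equation*}

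Finally I would identify the inner infimum over $(\xi,\F^\perp)$ with $B_{S,a,b}(\mu_+,\mu_-)$ from \cref{BWfinal}. Since $a<\infty$, \cref{afinite} is in force and \cref{BWfinal} gives $B_{S,a,b}(\mu_+,\mu_-)=W_{d_{S,a,b}}(\mu_+,\mu_-)$ for every admissible $(S,b)$ — this is the step that would otherwise fail and that dispenses with any lower bound on $b$. Hence the bracketed expression equals $W_{d_{S,a,b}}(\mu_+,\mu_-)+\int_S\varepsilon(b)\,\mathrm{d}\Ha=\mathcal{U}^{\varepsilon,\mu_+,\mu_-}[S,b]$, and taking the outer infimum over street networks $(S,b)$ with $b\leq a=\tau'(0)$ produces exactly $\inf_{\F}\mathcal{J}^{\tau,\mu_+,\mu_-}[\F]=\inf_{S,b}\mathcal{U}^{\varepsilon,\mu_+,\mu_-}[S,b]$, as claimed.

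Since the three inputs are already available, the proof is essentially bookkeeping; the point requiring genuine care is to verify that the class of friction coefficients produced by \cref{subst} (lower semi-continuous $b:S\to[0,\tau'(0)]$) coincides with the admissible friction coefficients of the urban planning problem, and that the constant $a$ attached to $\varepsilon$ and to $d_{S,a,b}$ is consistently $\tau'(0)$. The only substantive ingredient hidden in the chain is \cref{BWfinal} under \cref{afinite}, which is precisely what lets the argument avoid the explicit construction of minimizers used in \cite{LSW} and cover friction coefficients that are not bounded away from zero.
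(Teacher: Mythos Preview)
Your proposal proves the wrong statement. The lemma in question, \cref{version}, asserts that the branched transport cost $\mathcal{J}^{\tau,\mu_+,\mu_-}[\F]$ (defined via relaxation of the polyhedral cost, \cref{defags}) admits the closed integral formula $\int_S\tau(|\xi|)\,\d\Ha+\tau'(0)|\F^\perp|(\mathcal{C})$ once $\F$ is decomposed as $\xi\Ha\mres S+\F^\perp$. You never address this; instead you \emph{assume} \cref{version} (``Next I would start from the reformulation of branched transport in \cref{version}'') and go on to derive \cref{finalthm}. What you have written is in fact a faithful reproduction of the paper's proof of \cref{finalthm} under \cref{growthcond}, but it is not a proof of \cref{version}.

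The paper itself does not prove \cref{version} from scratch: it is quoted from \cite[Lemma\,3.1.8]{LSW}, which in turn rests on the structure theorem for divergence measure vector fields \cite[Thm.\,3.1]{Sil} (giving the decomposition $\F=\xi\Ha\mres S+\F^\perp$) together with the explicit identification of the relaxed envelope \cite[Prop.\,2.32]{BW} and \cite[Lem.\,5.15]{BCM} (giving the cost of each piece as $\tau(|\xi|)$ on the rectifiable part and $\tau'(0)$ times total variation on the diffuse part). A genuine proof of \cref{version} must supply or cite these ingredients; none of the duality results \cref{KRformula,BWfinal,subst} bear on it.
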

	In \cref{subs32} we will prove a dual formula for the first network cost term, which will be used to introduce the friction coefficients $b$. Based on this formula and \cref{BWfinal}, we will provide a short proof of \cref{finalthm} under \cref{growthcond}.
	\subsection{Duality for the total network transportation cost}
	\label{subs32}
	We apply a generalization of Rockafellar's duality theorem between a pair of decomposable spaces of vector-valued functions \cite[Thm.\,VII-7]{CV}. Let $S\subset\mathcal{C}$ be countably $k$-rectifiable and $\mathcal{H}^k$-measurable ($k=1$ and $S$ even Borel in our setting). We denote the $\sigma$-algebra of $\mathcal{H}^k$-measurable subsets of $S$ (or more precisely subsets that are Carath\'{e}odory-measurable relative to $\mathcal{H}^k$) by $\mathscr{H}^k(S)$. Using that the restriction of an outer measure like $\mathcal{H}^k$ on $\R^n$ to the Carath\'{e}odory-measurable sets yields a complete measure space, this property also holds for $(S,\mathscr{H}^k(S),\mathcal{H}^k\mres S)$. Furthermore, the measure space $(S,\mathscr{H}^k(S),\mathcal{H}^k\mres S)$ is $\sigma$-finite (every member of an approximating sequence for $S$ has finite measure). 
	\begin{conv}
		In this section ``measurability'' and ``integrability'' will refer to the measure space $(S,\mathscr{H}^k(S),\mathcal{H}^k\mres S)$.
	\end{conv}
	We remind the reader that a subset $A\subset S$ is in $\mathscr{H}^k(S)$ if and only if $\mathcal{H}^k(B)=\mathcal{H}^k(B\cap A)+\mathcal{H}^k(B\backslash A)$ for all $B\subset S$. By Carath\'{e}odory's criterion \cite[Thm.\,1.13]{LS} we have $\mathcal{B}(S)\subset\mathscr{H}^k(S)$.
	In our case, the ``vector-valued functions'' from \cite[Chap.\,VII-7]{CV} are given through integrable and real-valued functions on $S$. We consider the following vector spaces:
	\begin{align*}
	\mathscr{L}&=\{ f:S\to\R\text{ measurable and integrable} \} ,\\
	\mathscr{L}_{\text{b}}&=\{ f\in\mathscr{L}\, |\, f\text{ bounded}\},\\
	\mathscr{M}_{\text{b}}&=\{ f:S\to\R\text{ measurable and bounded}\}.
	\end{align*}
	Note that the elements of $\mathscr{L},\mathscr{L}_{\text{b}}$, and $\mathscr{M}_{\text{b}}$ are not equivalence classes. We follow here the setting in \cite[Def.\,VII-3]{CV}.
	\begin{defin}[{{Decomposable \cite[Def.\,VII-3]{CV}}}]
		\label{defdec}
		Let $V\subset\mathscr{L}$ be a subspace. The space $V$ is said to be decomposable if $1_Af+1_{S\setminus A}g\in V$ for all $A\in\mathscr{H}^k(S)$ with $\mathcal{H}^k(A)<\infty$, $f\in\mathscr{M}_{\text{b}}$ and $g\in V$.
	\end{defin}
	The vector spaces $\mathscr{L}$ and $\mathscr{L}_{\text{b}}$ are decomposable: Let either $V=\mathscr{L}$ or $V=\mathscr{L}_{\text{b}}$ and $A,f,g$ as in \cref{defdec}. Then we have
	\begin{equation*}
	\int_S|1_Af+1_{S\setminus A}g|\,\mathrm{d}\mathcal{H}^k\leq\mathcal{H}^k(A)|f|_{\infty,A}+\int_{S\setminus A}|g|\,\mathrm{d}\mathcal{H}^k<\infty
	\end{equation*}
	and thus $1_Af+1_{S\setminus A}g\in\mathscr{L}$. Further, if $V=\mathscr{L}_{\text{b}}$, then we obtain $|1_Af+1_{S\setminus A}g|\leq |f|_{\infty,A}+|g|_{\infty,A}<\infty$. Moreover, we get $x\mapsto f(x)g(x)\in\mathscr{L}$ for all $f\in\mathscr{L},g\in \mathscr{L}_{\text{b}}$ (cf.\ \cite[Def.\,VII-3]{CV}), which needs to be satisfied to apply \cite[Thm.\,VII-7]{CV}. 
	%\begin{rem}
	%	\label{cvrem}
	%	The setting in \cite[Chapter VII-3]{CV}) is much more general. One may replace $(S,\mathscr{H}^k(S),\mathcal{H}^k)$ by any complete and $\sigma$-finite measure space. Moreover, $\R$ could be replaced by any locally convex space $E$ such that $E$ and its topological dual $E'$ are Suslin locally convex spaces which are compatible with duality (cf.\ \cite[p.\,195]{CV}). One then considers scalarly integrable functions with values in $E$ (respectively $E'$) and the above spaces become a lot more general \cite[Def.\,VII-3]{CV}. We will also consider lower semi-continuous (and thus Borel measurable) functions $f:\R\to (-\infty ,\infty]$ which also may be replaced by so called ``normal integrands'' (cf.\ \cite[Def.\,VII-1]{CV}).
	%\end{rem}
	For $g\in\mathscr{L}$ and lower semi-continuous functions $f:\R\to \R\cup\{ \infty \}$ we write
	\begin{equation*}
	I_f(g)=\begin{cases}
	\int_Sf(g)\,\mathrm{d}\mathcal{H}^k& \text{if }\int_Sf(g)^+\,\mathrm{d}\mathcal{H}^k<\infty,\\
	\infty&\text{else},
	\end{cases}
	\end{equation*} 
	where $f(g)^+$ denotes the positive part of $f(g):S\to  \R\cup\{ \infty \}$. By our construction we can apply the following statement to generate the friction coefficients $b:S\to[0,\infty)$ of the urban planning problem.
	Essentially it characterizes the convex conjugate of integral functionals using $L^1$-$L^\infty$-type duality pairings: One may pull the convex conjugation inside the integral.
	\begin{prop}[{{First part of \cite[Thm.\,VII-7]{CV}}}]
		\label{cv}
		Let $f:\R\to \R\cup\{ \infty \}$ be lower semi-continuous. Assume that $I_f(g_0)$ is finite for at least one $g_0\in\mathscr{L}_{\text{b}}$. Then we obtain
		\begin{equation*}
		I_{f^*}(h)=\sup_{g\in\mathscr{L}_{\text{b}}}\int_Sgh\,\mathrm{d}\mathcal{H}^k-I_f(g)
		\end{equation*}
		for all $h\in\mathscr{L}$.
	\end{prop}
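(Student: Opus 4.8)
The statement is the classical identity of Rockafellar on conjugates of integral functionals (as cited from \cite{CV}), and the plan is to prove the ``$\ge$'' half $I_{f^*}(h)\ge\sup_{g\in\mathscr L_{\mathrm b}}\big(\int_S gh\,\d\mathcal H^k-I_f(g)\big)$ by Young's inequality and the harder ``$\le$'' half by a measurable selection argument. Before starting, note that $f$ is proper: if $f\equiv+\infty$ then $I_f(g_0)=\int_S f(g_0)\,\d\mathcal H^k=+\infty$ (as $\mathcal H^k(S)>0$), contradicting the hypothesis, while $f>-\infty$ holds by assumption. Hence $f^*$ is a proper convex lower semicontinuous function, in particular minorized by an affine map; since $h\in\mathscr L$, this gives $\int_S f^*(h)^-\,\d\mathcal H^k<\infty$, so $I_{f^*}(h)=\int_S f^*(h)\,\d\mathcal H^k\in(-\infty,+\infty]$ is unambiguously defined.

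\emph{Easy inequality.} For $g\in\mathscr L_{\mathrm b}$ with $I_f(g)<\infty$, Young's inequality yields the pointwise bound $g(x)h(x)-f(g(x))\le f^*(h(x))$, and $gh\in L^1$ since $g$ is bounded and $h\in\mathscr L$; integrating gives $\int_S gh\,\d\mathcal H^k-I_f(g)=\int_S\big(gh-f(g)\big)\d\mathcal H^k\le\int_S f^*(h)\,\d\mathcal H^k=I_{f^*}(h)$, and taking the supremum over $g$ (ignoring the terms with $I_f(g)=+\infty$, which contribute $-\infty$) gives this direction.

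\emph{Hard inequality via measurable selection.} Fix $\varepsilon>0$. Using $\sigma$-finiteness choose an exhaustion $S=\bigcup_j S_j$ with $S_j\uparrow$, $\mathcal H^k(S_j)<\infty$, and a measurable $\rho\colon S\to(0,\infty)$ with $\int_S\rho\,\d\mathcal H^k\le\varepsilon$. The function $(x,m)\mapsto\psi(x,m):=f(m)-m\,h(x)$ is a normal integrand (lower semicontinuous in $m$, measurable in $x$), so the multifunction $x\mapsto M(x):=\{m\in\R:\psi(x,m)\le\rho(x)-f^*(h(x))\}$ has measurable graph and closed values, which are non-empty because $\sup_m\big(-\psi(x,m)\big)=f^*(h(x))>f^*(h(x))-\rho(x)$. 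Completeness of $(S,\mathscr H^k(S),\mathcal H^k\mres S)$ allows the measurable selection theorem to produce a measurable $\hat g\colon S\to\R$ with $\hat g(x)\in M(x)$ for all $x$; thus $f(\hat g(x))\le\hat g(x)h(x)-f^*(h(x))+\rho(x)$, so $f(\hat g)$ is finite and $f(\hat g)^+\le|\hat g\,h|+f^*(h)^-+\rho$.

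\emph{Truncation, pasting and limit.} Since $\hat g$ need not lie in $\mathscr L_{\mathrm b}$, set $A_k:=S_k\cap\{|\hat g|\le k\}$, which has finite measure and satisfies $A_k\uparrow S$ up to a null set, and put $g_k:=1_{A_k}\hat g+1_{S\setminus A_k}g_0\in\mathscr L_{\mathrm b}$ using decomposability of $\mathscr L_{\mathrm b}$. Then
\[
\int_S g_k h\,\d\mathcal H^k-I_f(g_k)=\int_{A_k}\big(\hat g h-f(\hat g)\big)\d\mathcal H^k+\int_{S\setminus A_k}\big(g_0 h-f(g_0)\big)\d\mathcal H^k,
\]
where $I_f(g_k)<\infty$ thanks to the bound on $f(\hat g)^+$ together with $h,\,f^*(h)^-,\,f(g_0)^+\in L^1$; and on $A_k$ the first integrand is $\ge f^*(h)-\rho$ by the selector property. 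Letting $k\to\infty$, the last integral tends to $0$ if $I_f(g_0)>-\infty$ (dominated convergence) and to $+\infty$ if $I_f(g_0)=-\infty$ (which only helps), while $\int_{A_k}\big(f^*(h)-\rho\big)\d\mathcal H^k\to\int_S\big(f^*(h)-\rho\big)\d\mathcal H^k\ge I_{f^*}(h)-\varepsilon$ if $I_{f^*}(h)<\infty$, and $\to+\infty$ if $I_{f^*}(h)=+\infty$ (monotone/dominated convergence, using $A_k\uparrow S$ and $f^*(h)^-\in L^1$). In every case $\sup_{g\in\mathscr L_{\mathrm b}}\big(\int_S gh\,\d\mathcal H^k-I_f(g)\big)\ge I_{f^*}(h)-\varepsilon$, and $\varepsilon\downarrow0$ finishes the proof. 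The main obstacle is the measurable selection step — verifying that $x\mapsto M(x)$ is a measurable closed-valued multifunction with non-empty values, which is exactly where lower semicontinuity of $f$ (via the normal-integrand structure) and completeness of the measure space enter — together with the bookkeeping in the truncation/pasting step that keeps $g_k\in\mathscr L_{\mathrm b}$ while controlling the remainder $\int_{S\setminus A_k}(g_0h-f(g_0))$, which is delicate only because $I_f(g_0)$ is merely assumed bounded above rather than integrable.
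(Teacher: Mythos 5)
The paper never proves \cref{cv} itself\textemdash it is quoted verbatim from \cite[Thm.\,VII-7]{CV}\textemdash so your attempt has to be judged as a reconstruction of that theorem's standard proof, and that is exactly what it is: Fenchel\textendash Young for the inequality ``$\geq$'', and a normal-integrand/measurable-selection argument combined with decomposability of $\mathscr{L}_{\text{b}}$ for ``$\leq$''. There is, however, one genuine gap in the selection step: your multifunction $M(x)=\{m\,:\,f(m)-m h(x)\leq \rho(x)-f^*(h(x))\}$ is \emph{empty} at every $x$ with $f^*(h(x))=+\infty$ (there no $m$ satisfies $\psi(x,m)\leq-\infty$ because $f>-\infty$, and your non-emptiness argument ``$f^*(h(x))>f^*(h(x))-\rho(x)$'' reads $\infty>\infty$). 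This case is not vacuous under the stated hypotheses: $f$ is only lower semi-continuous (for $f(m)=-m^2$ one has $f^*\equiv+\infty$), and even for convex $f$ the value $h(x)$ may lie outside $\mathrm{dom}(f^*)$ on a set of positive measure (e.g.\ $f=\iota_{[0,\infty)}$ and $h>0$ there). In that situation $I_{f^*}(h)=+\infty$, and one still has to produce competitors $g\in\mathscr{L}_{\text{b}}$ driving the supremum to $+\infty$, which your construction does not deliver. The standard repair is to select $\hat g(x)$ from $\{m\,:\,m h(x)-f(m)\geq\min(f^*(h(x))-\rho(x),\,n)\}$, which is closed and non-empty for \emph{every} $x$; then your truncation/pasting argument goes through for fixed $n$, and letting $k\to\infty$ and then $n\to\infty$ (monotone convergence, using $f^*(h)^-+\rho\in L^1$) covers both the finite and the infinite case of $I_{f^*}(h)$. (In the paper's actual application, \cref{subst3}, one has $f=\varepsilon$ and $h=-|\xi|$ with $\varepsilon^*(-|\xi|)=-\tau(|\xi|)$ finite, so the bad set is empty; but the proposition as stated needs the fix.)

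A secondary slip: you justify $\int_S f^*(h)^-\,\mathrm{d}\mathcal{H}^k<\infty$ by an affine minorant of $f^*$ together with $h\in\mathscr{L}$. This fails when $\mathcal{H}^k(S)=\infty$ (which is allowed here\textemdash $S$ is only countably rectifiable, hence $\sigma$-finite), since the constant term of the minorant is then not integrable; moreover $f^*$ need not be proper when $f$ is not convex. The correct one-line argument, which you implicitly need later anyway, is Fenchel\textendash Young against the given $g_0$: $f^*(h)\geq g_0h-f(g_0)$ pointwise, and $g_0h,\,f(g_0)\in L^1(\mathcal{H}^k\mres S)$ because $g_0$ is bounded, $h$ is integrable, and finiteness of $I_f(g_0)$ forces $f(g_0)\in L^1$. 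With these two corrections the remaining bookkeeping (the easy inequality including the case $I_f(g)=-\infty$, the pasting via decomposability, and the passage to the limit in $k$) is sound.
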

	%\begin{rem}
	%	Recall that $f^*$ is convex and lower semi-continuous \cite[Corollary VII-2]{CV}. 
	%\end{rem}
	Fix a maintenance cost $\varepsilon$ induced by a transportation cost $\tau$, i.\,e., $\varepsilon(b)=(-\tau)^*(-b)$, where $\tau(m)=-\infty$ for $m<0$. Then we get the following formula for the network cost term from \cref{version}.
	\begin{corr}[Substitution of maintenance cost]
		\label{subst3}
		Assume that $\mathcal{H}^k(S)<\infty$. Furthermore, let $\xi :S\to\R^n$ be integrable. Let the transportation cost $\tau$ be right-continuous in $0$. Then we have
		\begin{equation*}
		\int_S\tau (|\xi |)\,\mathrm{d}\mathcal{H}^k=\inf_b\int_Sb|\xi |\,\mathrm{d}\mathcal{H}^k+\int_S\varepsilon (b)\,\mathrm{d}\mathcal{H}^k,
		\end{equation*}
		where the infimum is taken over $b\in\mathscr{L}_{\text{b}}$.% with $c=\inf\text{dom}(\varepsilon)\leq b\leq\tau'(0)$.
	\end{corr}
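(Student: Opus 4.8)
The plan is to obtain \cref{subst3} as a direct application of \cref{cv} to the lower semi-continuous convex function $f=(-\tau)^*$ and the integrand $h=|\xi|$; here $\tau$ is extended by $-\infty$ to the negative half-line (so that $-\tau$ equals $+\infty$ there) and $f=(-\tau)^*$ is the ordinary convex conjugate. The crucial point is that $f^*=(-\tau)^{**}=-\tau$, so that the left-hand side of the identity in \cref{cv} becomes $I_{f^*}(|\xi|)=\int_S(-\tau)(|\xi|)\,\mathrm{d}\mathcal{H}^k=-\int_S\tau(|\xi|)\,\mathrm{d}\mathcal{H}^k$, while, after the substitution $b=-g$ (a bijection of $\mathscr{L}_{\text{b}}$ onto itself) together with $(-\tau)^*(-b)=\varepsilon(b)$, the right-hand side of \cref{cv} turns into $-\inf_{b\in\mathscr{L}_{\text{b}}}\big(\int_Sb|\xi|\,\mathrm{d}\mathcal{H}^k+\int_S\varepsilon(b)\,\mathrm{d}\mathcal{H}^k\big)$. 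Negating then yields precisely the claimed formula. (A function $b$ that is negative on a set of positive measure contributes $+\infty$ to $\int_S\varepsilon(b)\,\mathrm{d}\mathcal{H}^k$ and is therefore irrelevant for the infimum, so it is harmless that $\mathscr{L}_{\text{b}}$ also contains sign-changing functions.)

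Carrying this out, I would first verify the hypotheses of \cref{cv}. The function $f=(-\tau)^*$ is lower semi-continuous, being a convex conjugate. For the finiteness condition -- $I_f(g_0)<\infty$ for some $g_0\in\mathscr{L}_{\text{b}}$ -- I would use that concavity of $\tau$ and $\tau(0)=0$ give $\tau(m)\le\tau(1)(1+m)$ for all $m\ge0$; hence for the constant function $g_0\equiv-\tau(1)$ one gets $I_f(g_0)=\mathcal{H}^k(S)\,(-\tau)^*(-\tau(1))=\mathcal{H}^k(S)\sup_{m\ge0}\big(\tau(m)-\tau(1)m\big)\le\tau(1)\,\mathcal{H}^k(S)<\infty$, where the finiteness of $\mathcal{H}^k(S)$ enters. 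The integrand $h=|\xi|$ lies in $\mathscr{L}$ since $\xi$ is integrable, and the same bound $\tau(m)\le\tau(1)(1+m)$ shows $\int_S\tau(|\xi|)\,\mathrm{d}\mathcal{H}^k<\infty$, so that both sides of the asserted identity, and all occurring integral functionals, are finite.

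The second ingredient is the identity $f^*=-\tau$. By the Fenchel--Moreau theorem this reduces to checking that $-\tau$, extended by $+\infty$ on $(-\infty,0)$, is convex, proper and lower semi-continuous. Convexity follows from concavity of $\tau$, and properness is clear. Lower semi-continuity holds trivially on $(-\infty,0)$ and on $(0,\infty)$ (where the concave function $\tau$ is continuous); at the boundary point $0$ it amounts to $\liminf_{m\to0}(-\tau)(m)\ge(-\tau)(0)=0$, and since the left limit equals $+\infty$ while the right limit equals $-\lim_{m\searrow0}\tau(m)$, this is exactly the assumed right-continuity of $\tau$ at $0$ (using also that $\tau$ is non-decreasing, so $\lim_{m\searrow0}\tau(m)\ge\tau(0)$ anyway). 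With $f^*=-\tau$ established, \cref{cv} gives $-\int_S\tau(|\xi|)\,\mathrm{d}\mathcal{H}^k=\sup_{g\in\mathscr{L}_{\text{b}}}\big(\int_Sg|\xi|\,\mathrm{d}\mathcal{H}^k-I_{(-\tau)^*}(g)\big)$, and the substitution $b=-g$ with $(-\tau)^*(-b)=\varepsilon(b)$ completes the proof.

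The only genuinely delicate point -- and the sole place where right-continuity of $\tau$ at $0$ is used -- is the lower semi-continuity of $-\tau$ at the boundary point $0$, and this hypothesis is indispensable. Indeed, a transportation cost with a jump at $0$ (a fixed ``setup cost'', e.g.\ $\tau\equiv1$ on $(0,\infty)$ with $\tau(0)=0$) already violates the identity for $\xi\equiv0$: then the left-hand side is $0$, while $\varepsilon\equiv1$ on $[0,\infty)$ forces $\inf_{b\in\mathscr{L}_{\text{b}}}\int_S\varepsilon(b)\,\mathrm{d}\mathcal{H}^k=\mathcal{H}^k(S)>0$. Apart from this, the remaining work is routine: keeping track of signs and conjugates, and using $\mathcal{H}^k(S)<\infty$ and $\xi\in L^1$ to keep all the functionals $I_{(\cdot)}$ finite, so that no $\infty-\infty$ ambiguities arise.
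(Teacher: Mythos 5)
Your proof is correct and follows essentially the same route as the paper: both rest on the Fenchel--Moreau identity $(-\tau)^{**}=-\tau$ (using right-continuity of $\tau$ at $0$ for lower semi-continuity) combined with \cref{cv}, the only difference being that you apply \cref{cv} to $f=(-\tau)^*$ with $h=|\xi|$, whereas the paper applies it to $f=\varepsilon$ with $h=-|\xi|$ --- the same computation up to the sign change $b=-g$. Your explicit verifications (finiteness via $\tau(m)\leq\tau(1)(1+m)$ and $\mathcal H^k(S)<\infty$, and the lsc check at $0$) match what the paper uses, just spelled out in more detail.
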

	\begin{proof}
		The biconjugate equals the convex and lower semi-continuous envelope \cite[Prop.\,2.28]{R}. The function $-\tau$ is already convex and lower semi-continuous by assumption. Thus, we conclude
		\begin{equation*}
		\tau (m)=-(-\tau )^{**}(m)=-\left(\sup_{v\in\R}vm-(-\tau )^*(v)\right)=-\left(\sup_{v\in\R}-vm-\varepsilon (v)\right)=-\varepsilon^*(-m).
		\end{equation*}
		Furthermore, we have $I_{\varepsilon}(v)<\infty$ for all $v\in\R$ with $\varepsilon (v)<\infty$ using $\mathcal{H}^k(S)<\infty$. We presumed $|\xi|\in\mathscr{L}$ and can therefore apply \cref{cv}:
		\begin{multline*}
		\int_S\tau (|\xi|)\,\mathrm{d}\mathcal{H}^k=-\int_S\varepsilon^*(-|\xi|)\,\mathrm{d}\mathcal{H}^k=-I_{\varepsilon^*}(-|\xi|)=-\sup_{b\in\mathscr{L}_{\text{b}}}-\int_Sb|\xi|\,\mathrm{d}\mathcal{H}^k-I_\varepsilon (b)=\inf_{b\in\mathscr{L}_{\text{b}}}\int_Sb|\xi|\,\mathrm{d}\mathcal{H}^k+\int_S\varepsilon (b)\,\mathrm{d}\mathcal{H}^k.\qedhere
		\end{multline*}
		%		We can require $b\geq c$, because $\varepsilon (v)=\infty$ for $v<c$. Moreover, if $b\in\mathscr{L}_{\text{b}}$ with $b\geq c$ such that the expression on the right-hand side is finite, then $\tilde{b}=\min\{ b,\tau'(0) \}$ satisfies $\tilde{b}\leq\tau'(0)$ and
		%		\begin{equation*}
		%		\int_S\tilde{b}|\xi|\,\mathrm{d}\mathcal{H}^k+\int_S\varepsilon (\tilde{b})\,\mathrm{d}\mathcal{H}^k\leq \int_Sb|\xi|\,\mathrm{d}\mathcal{H}^k+\int_S\varepsilon (\tilde{b})\,\mathrm{d}\mathcal{H}^k=\int_Sb|\xi|\,\mathrm{d}\mathcal{H}^k+\int_S\varepsilon (b)\,\mathrm{d}\mathcal{H}^k
		%		\end{equation*}
		%		by using $\varepsilon(v)=(-\tau )^*(-v)=0$ for $v\geq \tau'(0)$. 
	\end{proof}
	We now prove \cref{subst} in a slightly generalized setting (for arbitrary $k$ and $S$ not necessarily Borel measurable).
	\begin{thm}[Generalization of \cref{subst}]
		\label{subst2}
		Let $\xi :S\to\R^n$ be integrable. Assume that $\tau $ satisfies \cref{growthcond}. Then we have
		\begin{equation*}
		\int_S\tau (|\xi |)\,\mathrm{d}\mathcal{H}^k=\inf_b\int_Sb|\xi |\,\mathrm{d}\mathcal{H}^k+\int_S\varepsilon (b)\,\mathrm{d}\mathcal{H}^k,
		\end{equation*}
		where the infimum is taken over lower semi-continuous functions $b: S\to [0,\tau'(0)]$.
	\end{thm}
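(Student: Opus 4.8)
The plan is to prove the two inequalities separately. For ``$\le$'' I would use the integrated Fenchel--Young inequality: by definition $\varepsilon(b)\ge\tau(m)-bm$ for all $m\ge0,\ b\ge0$, hence $\tau(|\xi|)\le b|\xi|+\varepsilon(b)$ pointwise on $S$ for every measurable $b\colon S\to[0,\tau'(0)]$, and integrating over $S$ (all functions involved being non-negative) gives $\int_S\tau(|\xi|)\,\mathrm d\mathcal H^k\le\int_S b|\xi|\,\mathrm d\mathcal H^k+\int_S\varepsilon(b)\,\mathrm d\mathcal H^k$. Beforehand one should record that \cref{growthcond} makes $\tau$ continuous on $[0,\infty)$ — concavity already gives continuity on $(0,\infty)$, and $0\le\tau(m)\le\tau'(0)m\to0$ as $m\searrow0$ — so that $\tau(|\xi|)$ is measurable with $\int_S\tau(|\xi|)\,\mathrm d\mathcal H^k\le\tau'(0)\int_S|\xi|\,\mathrm d\mathcal H^k<\infty$.

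For the reverse inequality I would identify the pointwise-optimal friction explicitly and then regularize it. Let $\phi\colon[0,\infty)\to[0,\tau'(0)]$, $\phi(m)=\tau'(m^+)$, be the right derivative of $\tau$; it is non-increasing, hence Borel, with $\phi(0)=\tau'(0)$ and $\phi\ge0$. Since $\phi(m)$ is a supergradient of the concave function $\tau$ at $m$, Fenchel's equality $\tau(m)=\phi(m)\,m+\varepsilon(\phi(m))$ holds for all $m\ge0$, so $b_0:=\phi\circ|\xi|\colon S\to[0,\tau'(0)]$ is an $\mathscr H^k(S)$-measurable function for which, integrating the pointwise identity (all terms non-negative and $\le\tau'(0)|\xi|$), $\int_S b_0|\xi|\,\mathrm d\mathcal H^k+\int_S\varepsilon(b_0)\,\mathrm d\mathcal H^k=\int_S\tau(|\xi|)\,\mathrm d\mathcal H^k$. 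Thus $b_0$ is optimal, but only measurable rather than lower semi-continuous; it remains to replace it by a lower semi-continuous friction at the price of an arbitrarily small increase of this cost.

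This semicontinuous upgrade is the delicate point, since the two cost terms are weighted differently — $\int_S b|\xi|\,\mathrm d\mathcal H^k$ by the finite measure $|\xi|\,\mathcal H^k\mres S$, while $\int_S\varepsilon(b)\,\mathrm d\mathcal H^k$ is weighted by the possibly infinite measure $\mathcal H^k\mres S$ — so one cannot just approximate in the $L^1$-sense of one of them. The trick I would use is to approximate $b_0$ \emph{from above}: regarding $|\xi|\,\mathcal H^k\mres S$ as a finite, hence regular, Borel measure on the compact metric space $\mathcal C$, the Vitali--Carath\'eodory theorem yields a lower semi-continuous $v\colon\mathcal C\to\R$ with $v\ge b_0\ge0$ on $S$ and $\int_S(v-b_0)|\xi|\,\mathrm d\mathcal H^k<\epsilon$. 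Setting $b:=(v\wedge\tau'(0))|_S$ preserves lower semi-continuity (a minimum of a lower semi-continuous function and a constant), enforces the range $[0,\tau'(0)]$, and keeps $b_0\le b\le v$ on $S$ because $b_0\le\tau'(0)$; then $b\le v$ controls the first term while $b\ge b_0$ together with the monotonicity of $\varepsilon$ controls the second, giving
\begin{equation*}
\int_S b|\xi|\,\mathrm d\mathcal H^k+\int_S\varepsilon(b)\,\mathrm d\mathcal H^k\le\int_S b_0|\xi|\,\mathrm d\mathcal H^k+\epsilon+\int_S\varepsilon(b_0)\,\mathrm d\mathcal H^k=\int_S\tau(|\xi|)\,\mathrm d\mathcal H^k+\epsilon.
\end{equation*}
Letting $\epsilon\searrow0$ and combining with ``$\le$'' proves the identity, and the case $k=1$ with $S$ Borel recovers \cref{subst}. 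I expect precisely this step — arranging simultaneously lower semicontinuity, the range constraint $b\in[0,\tau'(0)]$, and the control of both weighted integrals — to be the main obstacle; a more pedestrian alternative would be to first reduce to $\mathcal H^k(S)<\infty$ along an approximating sequence $S^N$ (extending the friction by the finite value $\tau'(0)$ off $S^N$, which is where $\tau'(0)<\infty$ enters) and invoke \cref{subst3} before performing the same upgrade, but the direct construction above avoids this detour.
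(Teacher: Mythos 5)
Your proposal is correct, but it follows a genuinely different route from the paper. For the lower bound on the infimum you use the pointwise Fenchel--Young inequality, as the paper effectively does; the difference is in the other direction. The paper first invokes the interchange theorem for decomposable spaces (\cref{cv}, via \cref{subst3}) on a decomposition of $S$ into pieces of finite $\mathcal{H}^k$-measure to produce a near-optimal \emph{measurable bounded} $b$, and then upgrades it to a lower semi-continuous competitor by passing to closed finite-measure pieces $S^N$ (extended by $\tau'(0)$), replacing $b$ by a Borel representative, shifting by $\delta$ to stay inside the interior of $\mathrm{dom}(\varepsilon)$ where $\varepsilon$ is continuous, and applying Vitali--Carath\'eodory plus dominated convergence in two successive limits. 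You instead exhibit the exact pointwise minimizer $b_0=\tau'(|\xi|^+)$ directly: since the right derivative is a supergradient of the concave $\tau$, the Fenchel equality $\tau(m)=\tau'(m^+)m+\varepsilon(\tau'(m^+))$ holds, so no abstract duality theorem and no decomposition into finite-measure pieces are needed (this is essentially the construction the paper only records as \cref{constrmin} under an additional upper semi-continuity hypothesis on $|\xi|$, which your argument shows is irrelevant at the level of measurable minimizers). Your semicontinuous upgrade is also leaner: approximating $b_0$ from above by Vitali--Carath\'eodory with respect to the \emph{finite weighted} measure $|\xi|\,\mathcal{H}^k\mres S$, truncating at $\tau'(0)$, and using that $\varepsilon$ is non-increasing makes the $\varepsilon$-term decrease automatically, so the $\delta$-shift, the continuity of $\varepsilon$ on $(c,\infty)$, the right-continuity at $c$, and the dominated-convergence passages of the paper's proof are all avoided. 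What the paper's heavier route buys is independence from the specific structure you exploit (exact attainment of the conjugate by the supergradient and monotonicity of $\varepsilon$); what yours buys is a shorter, essentially self-contained argument. Two small points you should still make explicit, at the same level of care as the paper: before applying Vitali--Carath\'eodory, replace $b_0$ (which is only $\mathscr{H}^k(S)$-measurable) by a Borel representative agreeing with it $\mathcal{H}^k\mres S$-almost everywhere, exactly as the paper does by citing \cite[Rem.\,2.50]{AmFuPa00}; and note that the lower semi-continuous majorant $v$ may a priori take the value $+\infty$, which your truncation $v\wedge\tau'(0)$ harmlessly removes while preserving lower semi-continuity and the bound $b_0\leq v\wedge\tau'(0)\leq\tau'(0)$.
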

	\begin{proof}
		\underline{Formula true for functions $b\in\mathscr{M}_b$:}
		Using the procedure in \cref{notdefs} (compare with the sets $T_i$), we can assume that $S$ is a disjoint countable union of finitely $k$-rectifiable sets $S_i$, i.\,e., the $S_i\subset S$ are measurable with $\mathcal{H}^k(S_i)<\infty$ (we neglect nullsets due to the integration). Let $\delta>0$ and $\delta_i>0$ such that $\sum_i\delta_i=\delta$. By \cref{subst3} we can find functions $b_i\in\mathscr{L}_b$ with
		\begin{equation*}
		\int_{S_i}\tau (|\xi |)\,\mathrm{d}\mathcal{H}^k\geq\int_{S_i}|\xi|b_i+\varepsilon (b_i)\,\mathrm{d}\mathcal{H}^k-\delta_i.
		\end{equation*}
		Define $\tilde{b}\in\mathscr{M}_\text{b}$ by $\tilde{b}=b_i$ on $S_i$. Note that we can assume that each $b_i$ is bounded by $\tau'(0)$ because of $\varepsilon([\tau'(0),\infty))=\{ 0\}$. Application of the monotone convergence theorem yields
		\begin{multline*}
		\int_S\tau (|\xi|)\,\mathrm{d}\mathcal{H}^k
		= \sum_i\int_{S_i}\tau (|\xi|)\,\mathrm{d}\mathcal{H}^k
		\geq \sum_i\int_{S_i}|\xi|b_i+\varepsilon (b_i)\,\mathrm{d}\mathcal{H}^k-\delta
		=\int_{S}|\xi|\tilde{b}+\varepsilon (\tilde{b})\,\mathrm{d}\mathcal{H}^k-\delta\\
		\geq\inf_b\int_{S}|\xi|b+\varepsilon (b)\,\mathrm{d}\mathcal{H}^k-\delta
		=\inf_b\int_{S}|\xi|b\,\mathrm{d}\mathcal{H}^k+\int_S\varepsilon (b)\,\mathrm{d}\mathcal{H}^k-\delta,
		\end{multline*}
		where the infima are taken over functions $b\in\mathscr{M}_\text{b}$. Notice that $\varepsilon,b\geq 0$ was used in the last equation. The reverse inequality is a direct consequence of the definition of the convex conjugate. More precisely, let $b:S\to [0,\infty)$ be any measurable and bounded function. Then we obtain
		\begin{equation*}
		\int_S\tau (|\xi|)\,\mathrm{d}\mathcal{H}^k=-\int_S\varepsilon^*(-|\xi|)\,\mathrm{d}\mathcal{H}^k=-\int_S\sup_{v\in\R}-v|\xi (x)|-\varepsilon (v)\,\mathrm{d}\mathcal{H}^k(x)\leq\int_Sb|\xi|\,\mathrm{d}\mathcal{H}^k+\int_S\varepsilon(b)\,\mathrm{d}\mathcal{H}^k
		\end{equation*}
		by choosing $v=v(x)=b(x)$ in the inequality. Taking the infimum over $b$ shows the stated formula for $b\in\mathscr{M}_\text{b}$.
		\mbox{}
		\\\underline{The functions $b$ can be assumed to be lower semi-continuous:} As a first step, we simplify the statement to be shown. Write $c=\inf\text{dom}(\varepsilon)$. Note that every function in the infimum of the stated formula of the theorem can be assumed to have values in $[c,\tau'(0)]$ by the properties of $\varepsilon$. For $f\in\mathscr{M}_{\text{b}}$ with $f\geq 0$ define
		\begin{equation*}
		F(f)=\int_Sf|\xi|+\varepsilon (f)\,\mathrm{d}\mathcal{H}^k\in [0,\infty ].
		\end{equation*}
		Let $b\in\mathscr{M}_{\text{b}}$ be arbitrary with $F(b)<\infty$ (note that $F(\tau'(0))<\infty$ by the integrability of $|\xi|$ and $\varepsilon(\tau'(0))=0$). Let $S^N$ be an approximating sequence for $S$. Define functions by
		\begin{equation*}
		b_N=\begin{cases}
		b&\text{on }S^N,\\
		\tau'(0)&\text{on }S\setminus S^N.
		\end{cases}
		\end{equation*}
		We have $\varepsilon (b_N)\nearrow\varepsilon (b)$ and $b_N|\xi|\searrow b|\xi|$ for $N\to\infty$ pointwise $\mathcal{H}^k\mres S$-almost everywhere. Furthermore, the $b_N|\xi|$ satisfy
		\begin{equation*}
		\int_Sb_N|\xi|\,\mathrm{d}\mathcal{H}^k\leq\tau'(0)\int_S|\xi|\,\mathrm{d}\mathcal{H}^k<\infty. 
		\end{equation*}
		Hence, we get $F(b_N)\to F(b)$ for $N\to\infty$ by the monotone convergence theorem. Thus, for fixed $N$ it is enough to find a lower semi-continuous funcion $\ell_N:S\to [c,\tau'(0)]$ such that $F(\ell_N)$ is arbitrarily close to $F(b_N)$. This reduces to the problem of finding a lower semi-continuous function $\ell_N:S^N\to [c,\tau'(0)]$ with $F_N(\ell_N)$ being arbitrarily close to $F_N(b_N)$, where
		\begin{equation*}
		F_N(f)=\int_{S^N}f|\xi|+\varepsilon (f)\,\mathrm{d}\mathcal{H}^k
		\end{equation*}
		for $f\in\mathscr{M}_{\text{b}}$ with $f\geq 0$, because we can then replace $\ell_N$ by the lower semi-continuous function ($S^N$ is closed)
		\begin{equation*}
		\tilde{\ell}_N=\begin{cases}
		\ell_N&\text{on }S^N,\\
		\tau'(0)&\text{on }S\setminus S^N.
		\end{cases}
		\end{equation*}
		Note that $F_N(b_N)$ is finite by $F(b)<\infty$. For simplicity, we write $S,F,b$ instead of $S^N,F_N,b_N$ and assume that $S$ is closed with $\mathcal{H}^k(S)<\infty$ (like the $S^N$) and $F(b)<\infty$ (which implies $\varepsilon(b)\in\mathscr{L}$) for the rest of the proof. By \cite[Rem.\,2.50]{AmFuPa00} (which refers to \cite[Ex.\,1.3]{AmFuPa00}) the function $b\in\mathscr{M}_\text{b}$ can be assumed to be Borel measurable. We will below use the functions $b_\delta=b+\delta$ for $\delta>0$. Without loss of generality the $b_\delta$ have values in $[c+\delta,\tau'(0)+\delta]$. Since the restriction of $\mathcal{H}^k\mres S$ to $\mathcal{B}(S)$ now is a finite Radon measure on a compact set, we can apply the Vitali\textendash Carath\'{e}odory theorem \cite[\nopp 2.25]{Rud} to the functions $b_\delta\in\mathscr{L}$: for all $\delta>0$ there exists a sequence of lower semi-continuous functions $\ell_\delta^i$ with $b_\delta \leq \ell_\delta^i\leq\tau'(0)+\delta$ on $S$ and
		\begin{equation*}
		\int_S(\ell_\delta^i-b_\delta)\,\mathrm{d}\mathcal{H}^k\to 0\text{ for }i\to\infty.
		\end{equation*}
		This yields $\ell_\delta^i\to b_\delta$ pointwise $(\mathcal{H}^k\mres S)$-almost everywhere for $i\to\infty$ up to a subsequence. Now let $f_\delta^i=\ell_\delta^{i}|\xi|+\varepsilon (\ell_\delta^{i})$. We obtain $|f_\delta^{i}|\leq (\tau'(0)+\delta)|\xi|+\varepsilon (c+\delta)\in\mathscr{L}$ and $f_\delta^{i}\to b_\delta|\xi|+\varepsilon (b_\delta )$ pointwise $\mathcal{H}^k\mres S$-almost everywhere for $i\to\infty$ ($\varepsilon$ is continuous on $(c,\infty )$). Thus, we have $F(\ell_\delta^{i})\to F(b_\delta)$ for $i\to\infty$ by Lebesgue's dominated convergence theorem. In addition, we have $h_\delta=b_\delta|\xi|+\varepsilon (b_\delta)\to b|\xi|+\varepsilon (b)$ pointwise for $\delta\to 0$ ($\varepsilon$ is right-continuous in $c$) and $|h_\delta|\leq (\tau'(0)+\delta)|\xi|+\varepsilon (b)\in\mathscr{L}$, which yields $F(b_\delta )\to F(b)$ for $\delta\to 0$. This shows that $b$ can be assumed to be lower semi-continuous (choose $\delta$ sufficiently small and then $i$ sufficiently large for the corresponding subsequence of the $\ell_\delta^{i}$).
	\end{proof}
	\begin{rem}[Construction of minimizer]
		\label{constrmin}
		If $|\xi|$ is upper semi-continuous, then a minimizer $b$ is given by (cf.\ proof of \cite[Theorem 1.3.4]{LSW})
		\begin{equation*}
		b=-\max\partial(-\tau)(|\xi|).
		\end{equation*}
	\end{rem}
	\subsection{Branched transport as street network optimization}
	\begin{proof}[Proof of \cref{finalthm} under \cref{growthcond}]
		We apply \cref{version}, \cref{subst}, and \cref{BWfinal} (the variables are as in the problems and statements):
		\begin{align*}
		\inf_\F\mathcal{J}^{\tau,\mu_+,\mu_-}[\F]&=\inf_{\xi,S,\F^\perp}\int_S\tau(|\xi|)\mathrm{d}\mathcal{H}^1+\tau'(0)|\F^\perp|(\mathcal{C})+\iota_{\{ \mu_+-\mu_-\}}(\textup{div}(\xi\Ha\mres S+\F^\perp))\\
		&=\inf_{\xi,S,\F^\perp}\inf_b\int_Sb|\xi|\mathrm{d}\mathcal{H}^1+\int_S\varepsilon(b)\mathrm{d}\mathcal{H}^1+\tau'(0)|\F^\perp|(\mathcal{C})+\iota_{\{ \mu_+-\mu_-\}}(\textup{div}(\xi\Ha\mres S+\F^\perp))\\
		&=\inf_{S,b}\inf_{\xi,\F^\perp}\int_Sb|\xi|\mathrm{d}\mathcal{H}^1+\tau'(0)|\F^\perp|(\mathcal{C})+\iota_{\{ \mu_+-\mu_-\}}(\textup{div}(\xi\Ha\mres S+\F^\perp))+\int_S\varepsilon(b)\mathrm{d}\mathcal{H}^1\\
		&=\inf_{S,b}W_{d_{S,\tau'(0),b}}(\mu_+,\mu_-)+\int_S\varepsilon(b)\mathrm{d}\mathcal{H}^1\\
		&=\inf_{S,b}\mathcal{U}^{\varepsilon,\mu_+,\mu_-}[S,b].\qedhere
		\end{align*}
	\end{proof}
	\begin{rem}[Branched transport problem has minimizer under \cref{growthcond}]
		The growth condition $\tau'(0)<\infty$ implies
		\begin{equation*}
		\int_{0}^{1}\frac{\tau(m)}{m^{2-1/n}}\mathrm{d}m\leq\tau'(0)\int_{0}^{1}\frac{1}{m^{1-1/n}}\mathrm{d}m=\tau'(0)n<\infty,
		\end{equation*}
		which by \cite[Corollaries 2.19 \& 2.20]{BW} guarantees that the branched transport problem is finite and has a solution $\F$.
	\end{rem}
	\section{Acknowledgements}
	This work was supported by the Deutsche Forschungsgemeinschaft (DFG, German Research Foundation) under the priority program SPP 1962, grant WI 4654/1-1, and under Germany's Excellence Strategy EXC 2044 -- 390685587, Mathematics M\"unster: Dynamics\textendash Geometry\textendash Structure. B.W.’s and J.L.'s research was supported by the Alfried Krupp Prize for Young University Teachers awarded by the Alfried Krupp von Bohlen und Halbach-Stiftung. B.S.'s research was supported by the Emmy Noether Programme of the DFG, grant SCHM 3462/1-1.
	\printbibliography
\end{document}